\documentclass[11pt]{article}
\usepackage{amsmath, amssymb, array}
\usepackage{amsthm, fullpage}

\usepackage[boxed, linesnumbered, ruled, vlined]{algorithm2e}
\usepackage{graphicx}
\usepackage{tikz}
\usepackage{mathrsfs}
\usepackage[hang, flushmargin]{footmisc}
\usepackage{caption}

\usepackage{booktabs}% http://ctan.org/pkg/booktabs
\usetikzlibrary{plotmarks}
\usepackage{array,multirow}

%%%%%%%%%%%%%%%%%%%%%%%%%%%%%%%%%%%%%%%%%%%%%%%%%%%%%%%%%%%%%%%

%%%%%%%%%%%%%%%%%%%%%%%%%%%%%%%%%%%%%%%%%%%%%%%%%%%%%%%%%%%%%

%\textwidth=6.5 in \textheight=9 in \hoffset=-.7 in \voffset=-.5 in

% THEOREM Environments ---------------------------------------------------
 \newtheorem{thm}{Theorem}[section]
 \newtheorem{cor}[thm]{Corollary}
 \newtheorem{lem}[thm]{Lemma}
 \newtheorem{claim}[thm]{Claim}
 \newtheorem{prop}[thm]{Proposition}

 \theoremstyle{definition}
 \newtheorem{defn}[thm]{Definition}
  \newtheorem{problem}[thm]{Problem}
 \newtheorem{exmp}[thm]{Example}

 \newtheorem{rem}[thm]{Remark}

\numberwithin{equation}{section}

%%%%%%%%%%%%%%%%%%%%%%%%%%%%%%%%%%%%%%%%%%%%%%%%%%%%%%%%%%%%

% \newcommand{\BOP}{\mathbs{B}}

 \newcommand{\s}{\mathcal{S}}
 \newcommand{\is}{\mathcal{I}}
 \newcommand{\vs}{\mathcal{V}}
 
 \newcommand{\set}[1]{\left\{#1\right\}}
 \newcommand{\Set}[2]{\set{#1\ \vert\ #2}}

%%%%%%% -------------------------------------------------------

%%%%%%%%%%%%%%%% CROSS SYMBOLS %%%%%%%%%%%%%%%%%%%%%%%%%%%%%%

%%%%%%%%%%%%%%%%%%%%%%%%%%%%%%%%%%%%%%%%%%%%%%%%%%%%%%%

%%%%%%%%%%%%%%%%%%%%%%%%%%%%%%%%%%%%%%%%%%%%%%%%%%%%%%%%%%%%%%%%%

% TITLE PAGE STYLE -------------------------------------------------

% MATH -------------------------------------------------------------------
 %\DeclareMathOperator{\RE}{Re}

%%%%%%%%%%%%%%%%%%%%%%%%%%%%%%%%%%%%%%%%%%%%%%%%%%%%%%%%%%%%%

\begin{document}

\baselineskip 10pt
\title{\bf Characterization and recognition of proper tagged probe interval graphs}
\author{
Sanchita Paul\thanks{Department of Mathematics, Jadavpur University, Kolkata - 700 032, India. sanchitajumath@gmail.com},\ \ 
Shamik Ghosh\thanks{Department of Mathematics, Jadavpur University, Kolkata - 700 032, India. sghosh@math.jdvu.ac.in}\ \thanks{(Corresponding author)},\ \ 
Sourav Chakraborty\thanks{Chennai Mathematical Institute, Chennai, India. sourav@cmi.ac.in} \ \ 
 and \ 
Malay Sen\thanks{Department of Mathematics, North Bengal University, District - Darjeeling, West Bengal, Pin - 734 430, India.\ \ \  senmalay10@gmail.com}
}
\date{}

%\begin{titlepage}

\maketitle

\begin{abstract}
\noindent
\footnotesize {Interval graphs were used in the study of the human genome project by the famous molecular biologist Benzer. Later on probe interval graphs were introduced by Zhang as a generalization of interval graphs for the study of cosmid contig mapping of DNA. Further research in this area required more useful and cost effective tools. The concept of {\em tagged probe interval graphs} (briefly, TPIG) is motivated from this point of view,  where the set of vertices is partitioned into two sets, namely, probes and nonprobes and there is an interval on the real line corresponding to each vertex. The graph has an edge between two probe vertices if their corresponding intervals intersect, has an edge between a probe vertex and a nonprobe vertex if the interval corresponding to a nonprobe vertex contains at least one endpoint of the interval corresponding to a probe vertex and the set of nonprobe vertices is an independent set. 

\vspace{1em}\noindent
In this paper, we consider a natural subclass of TPIG, namely, the class of {\em proper tagged probe interval graphs} (in short PTPIG). We present characterization theorem and a linear time recognition algorithm for PTPIG. To obtain this characterization theorem we introduce {\em canonical sequences} for proper interval graphs. In the recognition algorithm for PTPIG, we introduce and solve a variation of consecutive $1$'s problem, namely, {\em oriented consecutive $1$'s problem} and some variations of PQ-tree algorithm. We also discuss the interrelations between the classes of PTPIG and TPIG with probe interval graphs and probe proper interval graphs.}
\smallskip

\noindent
{\scriptsize Keywords:} \footnotesize{Interval graph, proper interval graph, probe interval graph, probe proper interval graph, tagged probe interval graph, consecutive $1$'s property, PQ-tree algorithm.}
\end{abstract}

%\end{titlepage}

\baselineskip 18pt

\vspace{1em}

\newpage

\section{Introduction}

One of the most intriguing problem in molecular biology, especially in the human genome project, is the physical mapping of DNA that aims to reconstruct the relative position of fragments of DNA along the genome. In 1959, the famous molecular biologist Benzer \cite{B} nicely used the model of interval graphs to obtain such a physical map from information on pairwise overlaps of the fragments. Fragments are replicated as clones, their fingerprints are obtained by certain technique and finally overlap information is determined when their fingerprints are sufficiently similar. The interval graph is formed with clones as vertices and the adjacency represents overlap. Since then the study of interval graphs was spearheaded. Over the years the graph class established itself as a very rich and most popular subclass of the class of chordal graphs and in general, perfect graphs irrespective of its application part. In 1964, Gilmore and Hoffman \cite{GH} characterized interval graphs as a chordal graph whose complement is transitively oriented. In 1975, Booth and Leuker \cite{BL75} gave the first linear time algorithm for recognition of interval graphs by introducing \textrm{PQ}-tree algorithm.

\vspace{1em}\noindent
Various other characterizations, linear time recognition algorithms of interval graphs and its most widely studied subclass, proper interval graphs during the last fifty years and the research is still continuing. In 1994, Zhang introduced the concept of probe interval graphs which worked successfully as a model for a new concept, known as cosmid contig mapping. This generates overlap information by hybridization. A set of clones is placed on a filter for colony hybridization and the filter is probed with clones which are labeled. Thus the set of vertices (clones) is partitioned into probes and and nonprobes and the adjacency requires the overlap information between a pair of clones only when one of them is a probe. A polynomial time recognition algorithm for probe interval graphs was obtained by Johnson and Spinrad \cite{JS} in 2001 and the first linear time one was presented by McConnell and Nussbaum \cite{MN} in 2009. In the year 2010, Ghosh, Podder and Sen \cite{SMM} obtained a characterization of probe interval graphs in terms of adjacency matrix. Recently, Nussbaum \cite{Nu} gave a linear time algorithm for probe proper interval graphs in 2014.

\vspace{1em}\noindent
In a short time after the introduction of probe interval graphs, development of research in molecular biology requires further refinements and the concept of tagged probe interval graphs are defined in \cite{LP,SWZ,LP2}. In this model, a set of clones (probes) are radioactively labeled at their ends and one detects the overlapping of a pair of clones when one of them is labeled and the other contains at least one end of the labeled one. Thus here also vertices are partitioned into probes and nonprobes but the adjacency is determined by the above mentioned overlapping information.

\vspace{1em}\noindent
Interestingly, since its inception, there is still no characterization theorem or recognition algorithm for tagged probe interval graphs or any natural subclass of the class of tagged probe interval graphs, excepting probe proper interval graphs (cf. Subsection~\ref{secex}). The problem seems to be hard and a challenging one. However, it is proved that, like probe interval graphs, tagged probe interval graphs are also weakly chordal and hence perfect.

\vspace{1em}\noindent
In this paper, we obtain a characterization theorem and a linear time recognition algorithm for a natural subclass of the class of tagged probe interval graphs, namely, proper tagged probe interval graphs. To obtain this characterization theorem we introduce a concept called  canonical sequence for proper interval graphs, which we believe would be of independent interest in the study of proper interval graphs. Also to obtain the recognition algorithm for proper tagged probe interval graphs, we introduce and solve a variation of consecutive $1$'s problem, namely, oriented consecutive $1$'s problem and some variations of PQ-tree algorithm. We hope these results will be beneficiary to molecular biologists involved in the human genome project. For more detailed information on applications of interval graphs, probe interval graphs and tagged probe interval graphs in molecular biology and in other areas one may consult \cite{Br,BLS,G,GT,LP,Z}.

\section{Preliminaries}

A graph $G = (V, E)$ is an \textit{interval graph} if one can map each vertex into an interval on the real line so that any two vertices are adjacent if and only if their corresponding intervals intersect. Such a mapping of vertices into an interval on the real line is called an \textit{interval representation of $G$}. 

\vspace{1em}\noindent
A graph $G = (V,E)$ is a \textit{probe interval graph} (in short, PIG) if the vertex set $V$ can be partitioned into two disjoint sets probe vertices $P$ and nonprobe vertices $N$ and one can map each vertex into an interval on the real line (vertex $x\in V$ mapped to $I_x$) such that there is an edge between two vertices $x$ and $y$ if and only if at least one of them is in $P$ and their corresponding intervals $I_x$ and $I_y$ intersect. When the interval representation is proper (i.e., no interval is properly contained in another interval) the graph is called a {\em probe proper interval graph} (briefly, PPIG). Clearly PPIG is a subclass of PIG.

\begin{defn}\label{defn1} \cite{LP}
A graph $G = (V,E)$ is a \textit{tagged probe interval graph} if the vertex set $V$ can be partitioned into two disjoint sets $P$ (called ``probe vertices") and $N$ (called ``nonprobe vertices") and one can map each vertex into an interval on the real line (vertex $x\in V$ mapped to $I_x = [\ell_x, r_x]$) such that all the following conditions hold:
\begin{enumerate}
\item $N$ is an independent set in $G$, i.e., there is no edge between nonprobe vertices. 
\item If $x, y \in P$, then there is an edge between $x$ and $y$ if and only if $I_x \cap I_y \neq \emptyset$, or in other words the mapping is an interval representation of the subgraph of $G$ induced by $P$. 
\item If $x\in P$ and $y\in N$, then there is an edge between $x$ and $y$ if and only if the interval corresponding to the nonprobe vertex contains at least one endpoint of the interval corresponding to the probe vertex, i.e., either $\ell_x \in I_y$ or $r_x \in I_y$. 
\end{enumerate}

\noindent
We call the collection $\Set{I_x}{x\in V}$ a {\em TPIG representation} of $G$. If the partition of the vertex set $V$ into probe and nonprobe vertices is given, then we denote the graph as $G = (P,N,E)$. 
\end{defn}

\begin{problem}
Given a graph $G = (P, N, E)$, give a linear time algorithm for checking if $G$ is a  \textit{tagged probe interval graph}.
\end{problem}

\noindent
A natural and well studied subclass of interval graphs is the class of proper interval graphs. A {\em proper interval graph} $G$ is an interval graph in which there is an interval representation of $G$ such that no interval contains another properly. Such an interval representation is called a \textit{proper interval representation of $G$}.

\vspace{1em}\noindent
In this paper, we study a natural special case of tagged probe interval graphs which we call \textit{proper tagged probe interval graph} (in short, \textit{PTPIG}). The only extra condition that a PTPIG should satisfy over TPIG is that the mapping of the vertices into intervals on the real line that gives a TPIG representation of $G$ should be a proper interval representation of the subgraph of $G$ induced by $P$. 

\subsection{Notations}\label{subnote}

Suppose a graph $G$ is a PTPIG (or TPIG), then we will assume that the vertex set is partitioned into two sets $P$ (for probe vertices) and $N$ (for nonprobe vertices). To indicate that the partition is known to us, we will sometimes denote $G$ by $G=(P,N,E)$, where $E$ is the edge set. We will denote by $G_P$ the subgraph of $G$ that is induced by the vertex set $P$.  We will assume that there are $p$  probe vertices $\{u_1, \dots, u_p\}$ and $q$ nonprobe vertices $\{w_1, \dots, w_q\}$. To be consistent in our notation we will use $i$ or $i'$ or $i_1, i_2, \dots$ as indices for probe vertices and use $j$ or $j'$ or $j_1, j_2, \dots$ as indices for nonprobe vertices. 

\vspace{1em}\noindent
Let $G=(V,E)$ be a graph and $v\in V$. Then the set $N[v]=\Set{u\in V}{u\text{ is adjacent to } v}\cup\set{v}$ is the {\em closed neighborhood} of $v$ in $G$. A graph $G$ is called {\em reduced} if no two vertices have the same closed neighbourhood. If the graph is not reduced then we define an equivalence relation on the vertex set $V$ such that $v_i$ and $v_j$ are equivalent if and only if $v_i$ and $v_j$ have the same (closed) neighbors in $V$.  Each equivalence class under this relation is called a {\em block} of $G$. For any vertex $v\in V$ we denote the equivalence class containing $v$ by $B(v)$. Let the blocks of $G$ be $B^G_1, B^G_2,\ldots, B^G_t$ (or sometimes just $B_1, B_2,\ldots, B_t$). So, the collection of blocks is a partition of $V$.  The reduced graph of $G$ (denoted by $\widetilde{G}=(\widetilde{V},\widetilde{E})$) is the graph obtained by merging all the vertices that are in the same equivalence class. 

\vspace{1em}\noindent
If $M$ is a $(0,1)$-matrix, then we say $M$ satisfies the {\em consecutive $1$'s property} if in each row and column, $1$'s appear consecutively \cite{H,MPT}. We will denote by $A(G)$ the {\em augmented adjacency matrix} of the graph $G$, in which all the diagonal entries are $1$ and non-diagonal elements are same as the adjacency matrix of $G$.  

\subsection{Background Materials}

\subsubsection{PQ-trees}

In the past few decades many variations of interval graphs have been studied mostly in context of modeling different scenarios arising from molecular biology and DNA sequencing. Understanding the structure and properties of these classes of graphs and designing efficient recognition algorithms are the central problems in this field. Many times these studies have led to nice combinatorial problems and development of important data structures. 

\vspace{1em}\noindent
For example, the original linear time recognition algorithm for interval graphs by Booth and Lueker~\cite{BL76} in 1976 is based on their complex PQ-tree data structure (also see \cite{BL75}). Habib et al.~\cite{HMPV} in 2000 showed how to solve the problem more simply using lexicographic breadth-first search, based on the fact that a graph is an interval graph if and only if it is chordal and its complement is a comparability graph. A similar approach using a 6-sweep LexBFS algorithm is described in Corneil, Olariu and Stewart~\cite{cor3} in 2010. 

\vspace{1em}\noindent
In this paper, we will be using the data structure of PQ-trees quite extensively. PQ-trees are not only used to check whether a given matrix satisfies the consecutive $1$'s property; they also store all the possible permutations such that if one permutes the rows (or column) using the permutations, the matrix would satisfy the consecutive $1$'s property. We define a generalization of the problem of checking consecutive $1$'s property to a problem called oriented-consecutive $1$'s problem and show how the PQ-tree representation can be used to solve this problem also. Details are available in Section~\ref{reco}. 

\begin{table}
\centering                           
\begin{tabular}{|c|l|l|}
\hline
Index & Properties & References\\
\hline
1 & $G$ is a Proper Interval Graph.  & \multirow{3}{*}{\cite{G,rob1,rob2,rob3,rob4}} \\ \cline{1-2}
2 & $G$ is a Unit Interval Graph &\\ \cline{1-2}
3 & $G$ is {\em claw-free}, i.e., $G$ does not contain $K_{1,3}$ as an induced subgraph. &\\ \hline
4 & For all $v\in V$, elements of $N[v]=\Set{u\in V}{uv\in E}\cup\set{v}$ are & \multirow{8}{*}{\cite{cor1,cor2,cor3,G}}  \\
  & consecutive for some ordering of $V$ (closed neighborhood condition). & \\ \cline{1-2}
5 & There is an ordering ${v_{1},v_{2},\cdots,v_{n}}$ of V such that &\\
  & $G$ has a proper interval graph representation &\\
  & $\{I_{v_{i}}=[a_{i},b_{i}]|i=1,2,\cdots,n\}$ where $a_{1}<a_{2}<\cdots<a_{n}$ &\\
  &  and $b_{1}<b_{2}<\cdots<b_{n}$. &\\ \cline{1-2}
6 & There is an ordering of V such that the augmented &\\
  & adjacency matrix $A(G)$ of $G$ satisfies the consecutive $1$'s property. &\\ \hline
7 & \footnotesize{\textit{A {\em straight enumeration} of $G$ is a linear ordering of blocks}} & \multirow{5}{*}{\cite{DHH,HH,HSS,Nu}}\\
  & \footnotesize{\textit{(vertices having same closed neighborhood) in $G$, such that}} & \\
	& \footnotesize{\textit{for every block, the block and its neighboring blocks are}} & \\
	& \footnotesize{\textit{consecutive in the ordering.}} & \\
	& $G$ has a straight enumeration & \\
	& which is unique up to reversal, if $G$ is connected. & \\ \hline   
8 & \footnotesize{\textit{The {\em reduced graph} $\widetilde{G}$ is obtained from $G$}} & \multirow{6}{*}{\cite{MSG}}\\
  & \footnotesize{\textit{by merging vertices having same closed neighborhood.}} & \\
	& \footnotesize{\textit{$G(n,r)$ is a graph with $n$ vertices $x_1,x_2,\ldots,x_n$ such that}} & \\
	& \footnotesize{\textit{$x_i$ is adjacent to $x_j$ if and only if $0<|i-j|\leqslant r$, where $r$ is a positive integer.}} \\
  & $\widetilde{G}$ is an induced subgraph of $G(n,r)$ & \\
  & for some positive integers $n,r$ with $n>r$.& \\ 
\hline
\end{tabular}
\caption{\footnotesize{Characterizations of proper interval graphs: equivalent conditions on an interval graph $G=(V,E)$.}}\label{t:proper}
\end{table}

\subsubsection{Proper Interval Graphs}\label{sppg}

Recall that a proper interval graph $G$ is an interval graph in which there is an interval representation of $G$ such that no interval contains another properly. It is important to note that a proper interval graph $G$ may have an interval representation which is not proper. Linear-time recognition algorithms for proper interval graphs are obtained in \cite{DHH,FMM,HH,HSS}. A {\em unit interval graph} is an interval graph in which there is an interval representation of $G$ such that all intervals have the same length. Interestingly, these two concepts are equivalent. Another equivalence is that an interval graph is a proper interval graph if and only if it does not conatin $K_{1,3}$ as an induced subgraph. The class of proper interval graphs is an extremely rich class of graphs and there are several other characterizations of it (see Table~\ref{t:proper}). Among them we repeatedly use the following equivalent conditions in the rest of the paper: 

\begin{thm}\label{troberts}
Let $G=(V,E)$ be an interval graph. then the following are equivalent:
\begin{enumerate}
\item $G$ is a proper interval graph.
\item There is an ordering of $V$ such that for all $v\in V$, elements of $N[v]$ are consecutive (the closed neighborhood condition).\label{cnc}
\item There is an ordering of $V$ such that the augmented adjacency matrix $A(G)$ of $G$ satisfies the consecutive $1$'s property.\label{con}
\item There is an ordering $\set{v_1,v_2,\ldots,v_n}$ of $V$ such that $G$ has a proper interval representation $\Set{I_{v_i}=[a_i,b_i]}{i=1,2,\ldots,n}$ where $a_i\neq b_j$ for all $i,j\in\set{1,2,\ldots,n}$, $a_1<a_2<\cdots <a_n$ and $b_1< b_2<\cdots <b_n$.\label{order}
\end{enumerate}
\end{thm}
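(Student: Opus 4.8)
The plan is to prove the four statements equivalent by a cycle of implications, together with the observation that conditions \ref{cnc} and \ref{con} are really the same statement. Indeed, row $v$ of the augmented adjacency matrix $A(G)$ has its $1$'s precisely in the positions indexed by $N[v]$, so, under a given ordering of $V$, the rows of $A(G)$ have consecutive $1$'s if and only if the closed neighborhoods are consecutive; and since $A(G)$ is symmetric, consecutiveness of the $1$'s in the rows is equivalent to consecutiveness in the columns. Hence \ref{cnc} $\Leftrightarrow$ \ref{con} is immediate, and it remains to establish a cycle through conditions (1), \ref{order} and \ref{cnc}.

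For (1) $\Rightarrow$ \ref{order}, I would start from a proper interval representation $\Set{[\ell_v,r_v]}{v\in V}$ and first reduce to the case in which all $2n$ endpoints are distinct: a small generic perturbation of the endpoints leaves the intersection pattern and the non-containment property unchanged, so one may assume $\ell_i\neq\ell_j$, $r_i\neq r_j$ and $\ell_i\neq r_j$ throughout. Ordering the vertices so that $\ell_{v_1}<\cdots<\ell_{v_n}$ and writing $a_i=\ell_{v_i}$, $b_i=r_{v_i}$, it then suffices to check $b_1<\cdots<b_n$: if instead $a_i<a_j$ but $b_i>b_j$ for some $i<j$, the interval of $v_j$ would be properly contained in that of $v_i$, contradicting properness. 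This yields exactly the representation required in \ref{order}, and \ref{order} $\Rightarrow$ (1) holds by definition, since \ref{order} exhibits a proper interval representation of $G$. The implication \ref{order} $\Rightarrow$ \ref{cnc} is then a threshold argument in the ordering $v_1,\ldots,v_n$ supplied by \ref{order}: for $i<j$ the intervals $[a_i,b_i]$ and $[a_j,b_j]$ meet if and only if $a_j\le b_i$, and since both endpoint sequences increase with the index, for a fixed vertex the neighbors on either side of it form a final (respectively initial) run of indices; hence $N[v_i]$ is a consecutive block.

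The heart of the proof is \ref{cnc} $\Rightarrow$ (1), where I would construct a proper interval representation directly from an ordering $v_1,\ldots,v_n$ witnessing the closed neighborhood condition. Setting $f(i)=\max\Set{j}{v_j\in N[v_i]}$, the key lemma is that $f$ is non-decreasing: if $i<j$ but $f(i)>f(j)$, put $k=f(i)$, so that $i<j<k$ and $v_i\in N[v_k]$; consecutiveness of $N[v_k]$ then forces $v_j\in N[v_k]$, i.e.\ $j\le f(j)$ would have to exceed $k$, a contradiction. Granting this, I would assign $a_i=i$ and $b_i=f(i)+\tfrac{i}{2n}$; because $f(i)\ge i$, the fractional correction is less than $1$ and never makes $b_i$ an integer, so one checks readily that $a_i<a_j$ and $b_i<b_j$ for $i<j$, that $a_i\neq b_j$ always, and that for $i<j$ one has $a_j\le b_i$ exactly when $j\le f(i)$, i.e.\ exactly when $v_i$ and $v_j$ are adjacent. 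Strictly increasing left and right endpoints rule out any containment, so this is a proper interval representation (in fact one of the form demanded by \ref{order}).

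The main obstacle I anticipate is precisely this construction: establishing the monotonicity of $f$ from the bare consecutiveness hypothesis, and then choosing the endpoint offsets so that the intersection pattern is reproduced exactly while strict monotonicity of both endpoint sequences is preserved. The perturbation step in (1) $\Rightarrow$ \ref{order} is the other place where some care with closed intervals is required, since separating touching endpoints must not destroy an intersection; I would handle this by the standard observation that for a sufficiently small generic shift the closed-interval intersection relation is unchanged.
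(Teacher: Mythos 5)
Your proposal is correct in its overall architecture, but it is worth noting that the paper does not prove this theorem at all: it is quoted as background, with the individual equivalences attributed to the literature (Roberts, Golumbic, Corneil, Deng--Hell--Huang, etc., as collected in Table~1). So there is no ``paper proof'' to match; what you have written is a self-contained argument, and its core is sound. The identification of (2) with (3) via symmetry of $A(G)$ is immediate and correct. Your key construction for (2)~$\Rightarrow$~(1) --- setting $f(i)=\max\Set{j}{v_j\in N[v_i]}$, proving $f$ nondecreasing from consecutiveness of the closed neighborhoods, and taking $a_i=i$, $b_i=f(i)+i/2n$ --- is exactly right, and it is in fact the same device the paper itself uses later (in the proof of Proposition~2.4, where $U(i)$ plays the role of your $f(i)$ and $b_i=U(i)+1-\frac{1}{i}$). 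Note that your construction already outputs a representation of the form demanded in (4), so you get (2)~$\Rightarrow$~(4) for free; combined with the easy (4)~$\Rightarrow$~(2) threshold argument and (4)~$\Rightarrow$~(1), this would let you close the cycle as (1)~$\Rightarrow$~(2)~$\Rightarrow$~(4)~$\Rightarrow$~(1) and avoid the endpoint-perturbation step entirely, since (1)~$\Rightarrow$~(2) needs only the non-strict monotonicity of both endpoint sequences under the left-endpoint ordering.

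The one step I would not accept as written is the perturbation in (1)~$\Rightarrow$~(4). The claim that ``for a sufficiently small generic shift the closed-interval intersection relation is unchanged'' is false: if $I_u=[0,1]$ and $I_v=[1,2]$ meet only at the point $1$, a generic perturbation destroys that intersection with positive probability, however small the shift. What is true, and what you need, is a \emph{directed} tie-breaking: whenever a right endpoint $r_u$ coincides with a left endpoint $\ell_v$ of an intersecting interval, the nudge must keep $\ell_v\leqslant r_u$; and coinciding left (or right) endpoints in a proper representation force the two intervals to be identical (otherwise one properly contains the other), after which identical intervals can be separated by a translation chosen with the same care at their own endpoints. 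This is standard and fixable, but as stated the step has a genuine hole --- or, better, it can simply be bypassed by the reordering of implications suggested above.
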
 

\begin{rem}\label{remcan}
We note that in a proper interval graph $G=(V,E)$, the ordering of $V$ that satisfies any one of the conditions (\ref{cnc}), (\ref{con}) and (\ref{order}) in the above theorem also satisfies the other conditions among them.
\end{rem}

\subsubsection{Relation between PTPIG and other variants}\label{secex}

The definition of PIG is very similar to that of TPIG, but the two classes of graphs are not comparable~\cite{LP}. But PPIG is a proper subclass of PTPIG. In fact, since in an interval representation of PPIG, no interval contains other properly, there is an edge between a probe and nonprobe vertices if and only if an endpoint of the interval corresponding to the probe vertex belongs to the interval corresponding the nonprobe vertex. 

\vspace{0.5em}\noindent
On the other hand, $K_{1,3}$ with a single nonprobe at the center cannot be a PPIG for otherwise it would be a proper interval graph (as any probe interval graph with a single nonprobe vertex is an interval graph). But it is a PTPIG by choosing three disjoint intervals for probe vertices and an interval containing all of them corresponding to the nonprobe vertex. As $K_{1,3}$ is an interval graph, it is an example of PIG and PTPIG, but not a PPIG. 

\vspace{0.5em}\noindent
Similarly, $C_4$ with a single nonprobe vertex is a PTPIG with an interval representation $[3,4]$ for the nonprobe and $\set{[1,3],[2,5],[4,6]}$ for probes, but this is not a PIG (for otherwise it would be an interval graph). Now consider the graph $G_1$ in Figure~\ref{fig:ex5}. That $G_1$ is a PIG and TPIG follows from the interval representation described in Figure~\ref{ex5}. But $G_1$ is not a PTPIG as the subgraph induced by probe vertices is $K_{1,3}$ which is not a proper interval graph.

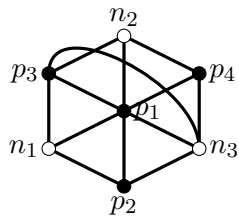
\begin{figure}[b]
\centering
\begin{tikzpicture}
\draw[-][draw=black,very thick] (0,0) -- (0,1);
\draw[-][draw=black,very thick] (0,0) -- (1,.5);
\draw[-][draw=black,very thick] (0,0) -- (-1,.5);
\draw[-][draw=black,very thick] (0,0) -- (1,-.5);
\draw[-][draw=black,very thick] (0,0) -- (-1,-.5);
\draw[-][draw=black,very thick] (0,0) -- (0,-1);
\draw[-][draw=black,very thick] (0,1) -- (-1,.5);
\draw[-][draw=black,very thick] (0,1) -- (1,.5);
\draw[-][draw=black,very thick] (-1,.5) -- (-1,-.5);
\draw[-][draw=black,very thick] (-1,-.5) -- (0,-1);
\draw[-][draw=black,very thick] (0,-1) -- (1,-.5);
\draw[-][draw=black,very thick] (1,-.5) -- (1,.5);
\draw[-][draw=black,very thick] (-1,0.5) to [out=90,in=90] (1,-.5);
\draw [fill=black] (0,0) circle [radius=0.09];
\draw [fill=black] (1,0.5) circle [radius=0.09];
\draw [fill=black] (-1,0.5) circle [radius=0.09];
\draw [fill=black] (0,-1) circle [radius=0.09];
\draw [fill=white] (0,1) circle [radius=0.09];
\draw [fill=white] (-1,-.5) circle [radius=0.09];
\draw [fill=white] (1,-.5) circle [radius=0.09];
\node [above] at (0,1) {{$n_{2}$}};
\node [right] at (1,.5) {{$p_{4}$}};
\node [right] at (0,0) {{$p_{1}$}};
\node [right] at (1,-.5) {{$n_{3}$}};
\node [below] at (0,-1) {{$p_{2}$}};
\node [left] at (-1,-.5) {{$n_{1}$}};
\node [left] at (-1,.5) {{$p_{3}$}};
\end{tikzpicture}
\caption{The graph $G_1$.} \label{fig:ex5}
\end{figure}

\begin{figure}[t]
\centering
\begin{tikzpicture}
\draw[-][draw=black,very thick] (1,0) -- (4,0);
\draw[-][draw=black,thick] (2,-0.3) -- (5,-0.3);
\draw[-][draw=black,very thick] (3,-.6) -- (7,-.6);
\draw[-][draw=black,thick] (3.5,-.9) -- (8,-.9);
\draw[-][draw=black,very thick] (4.5,-1.2) -- (6,-1.2);
\draw[-][draw=black,thick] (5.8,-1.5) -- (10,-1.5);
\draw[-][draw=black,very thick] (6.5,-1.8) -- (11,-1.8);
\node [left] at (.8,0) {\tiny{$p_{2}$}};
\node [left] at (.8,-.3) {\tiny{$n_{1}$}};
\node [left] at (.8,-.6) {\tiny{$p_{1}$}};
\node [left] at (.8,-.9) {\tiny{$n_{3}$}};
\node [left] at (.8,-1.2) {\tiny{$p_{3}$}};
\node [left] at (.8,-1.5) {\tiny{$n_{2}$}};
\node [left] at (.8,-1.8) {\tiny{$p_{4}$}};
\node at (1,0) {\pgfuseplotmark{square*}};
\node at (4,0) {\pgfuseplotmark{square*}};
\node at (3,-.6) {\pgfuseplotmark{square*}};
\node at (7,-.6) {\pgfuseplotmark{square*}};
\node at (4.5,-1.2) {\pgfuseplotmark{square*}};
\node at (6,-1.2) {\pgfuseplotmark{square*}};
\node at (11,-1.8) {\pgfuseplotmark{square*}};
\node at (6.5,-1.8) {\pgfuseplotmark{square*}};
\node at (1,.3) {\tiny{$1$}};
\node at (1.3,.3) {\tiny{$2$}};
\node at (1.6,.3) {\tiny{$3$}};
\node at (1.9,.3) {\tiny{$4$}};
\node at (2.1,.3) {\tiny{$5$}};
\node at (2.4,.3) {\tiny{$6$}};
\node at (2.7,.3) {\tiny{$7$}};
\node at (3,.3) {\tiny{$8$}};
\node at (3.3,.3) {\tiny{$9$}};
\draw[densely dotted][draw=black,thick] (3.6,0.3) -- (10.8,.3);
\node at (11.1,.3) {\tiny{$32$}};
\end{tikzpicture}
\caption{Interval representation of the graph $G_1$ in Figure ~\ref{fig:ex5}.}\label{ex5}
\end{figure}
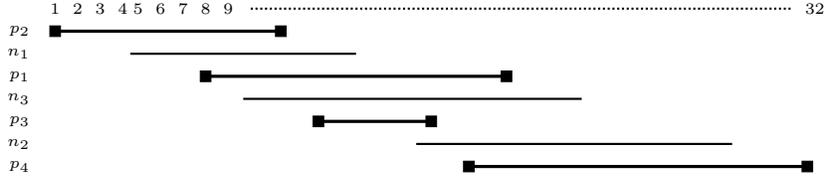

\section{Canonical Sequence of Proper Interval Graphs}\label{sproper}

Let $G$ be a proper interval graph. Then there is an ordering of $V$ that satisfies conditions (\ref{cnc}), (\ref{con}) and (\ref{order}) of Theorem~\ref{troberts}. Henceforth we call such an ordering, a {\em natural} or {\em canonical} ordering of $V$. But this canonical ordering is not unique. A proper interval graph may have more than one canonical ordering of its vertices. Interestingly, it follows from Corollary 2.5 of \cite{DHH} (also see \cite{Nu}) that the canonical ordering is unique up to reversal for a connected reduced proper interval graph.

\begin{defn}\label{dseq}
Let $G=(V,E)$ be a proper interval graph. Let $\set{v_1,v_2,\ldots,v_n}$ be a canonical ordering of the set $V$ with the interval representation be $\Set{I_{v_i}=[a_i,b_i]}{i=1,2,\ldots,n}$ where $a_i\neq b_j$ for all $i,j\in\set{1,2,\ldots,n}$, $a_1<a_2<\cdots <a_n$ and $b_1< b_2<\cdots <b_n$. Now we combine all $a_i$ and $b_i$ ($i=1,2,\ldots,n$) in a single increasing sequence which we call the {\em interval canonical sequence} with respect to the canonical ordering of vertices of $G$ and is denoted by $\is_G$. 

\vspace{1em}\noindent
Now if we replace $a_i$ or $b_i$ by $i$ for all $i=1,2,\ldots,n$ in $\is_G$, then we obtain a sequence of integers belonging to $\set{1,2,\ldots,n}$ each occurring twice. We call such a sequence a {\em canonical sequence} of $G$ with respect to the canonical ordering of vertices of $G$ and is denoted by $\s_G$. Moreover, if we replace $i$ by $v_i$ for all $i=1,2,\ldots,n$ in $\s_G$, then the resulting sequence is called a {\em vertex canonical sequence} of $G$ (corresponding to the canonical sequence $\s_G$) and is denoted by $\vs_G$. 

\vspace{1em}\noindent
Note that $\s_G$ and its corresponding $\vs_G$ and $\is_G$ can all be obtained uniquely from each other. Thus by abuse of notations, sometimes we will use the term canonical sequence to mean any of these. 
\end{defn}

\subsection{Example of Canonical Sequence}

\begin{exmp}\label{ex22}
Consider the proper interval graph $G=(V,E)$ whose augmented adjacency matrix $A(G)$ along with a proper interval representation is given in Table \ref{tap2}. Note that vertices of $G$ are arranged in a canonical ordering as $A(G)$ that satisfies the consecutive $1$'s property.
Let $[a_i,b_i]$ be the interval corresponding to the vertex $v_i$ for $i=1,2,\ldots,8$. Then 
$$\begin{array}{|c|c|c|c|c|c|c|c|}
\hline
a_1 & a_2 & a_3 & a_4 & a_5 & a_6 & a_7 & a_8 \\
\hline
2 & 3 & 6 & 7 & 8 & 10 & 16 & 22 \\
\hline
b_1 & b_2 & b_3 & b_4 & b_5 & b_6 & b_7 & b_8 \\
\hline
4 & 9 & 12 & 14 & 17 & 20 & 24 & 28 \\
\hline
\end{array}$$
Then the interval canonical sequence combining $a_i$ and $b_i$ is given by 
$$\is_G=(a_1, a_2, b_1, a_3, a_4, a_5, b_2, a_6, b_3, b_4, a_7, b_5, b_6, a_8, b_7, b_8).$$
Therefore the canonical sequence and the vertex canonical sequence with respect to the given canonical vertex ordering is 
$$\s_G=(1\ 2\ 1\ 3\ 4\ 5\ 2\ 6\ 3\ 4\ 7\ 5\ 6\ 8\ 7\ 8)\text{ and } \vs_G=(v_1\ v_2\ v_1\ v_3\ v_4\ v_5\ v_2\ v_6\ v_3\ v_4\ v_7\ v_5\ v_6\ v_8\ v_7\ v_8).$$ 
\end{exmp}

\begin{table}[t]
{\footnotesize
$$\begin{array}{cc|cccccccc|}
& \multicolumn{1}{c}{} & [2,4] & [3,9] & [6,12] & [7,14] & [8,17] & [10,20] & [16,24] & \multicolumn{1}{c}{[22,28]}\\
& \multicolumn{1}{c}{} & v_1 & v_2 & v_3 & v_4 & v_5 & v_6 & v_7 & \multicolumn{1}{c}{v_8} \\
\cline{3-10}
\text{$[2,4]$} & v_1 & 1 & 1 & 0 & 0 & 0 & 0 & 0 & 0 \\
\text{$[3,9]$} & v_2 & 1 & 1 & 1 & 1 & 1 & 0 & 0 & 0 \\
\text{$[6,12]$} & v_3 & 0 & 1 & 1 & 1 & 1 & 1 & 0 & 0 \\
\text{$[7,14]$} & v_4 & 0 & 1 & 1 & 1 & 1 & 1 & 0 & 0 \\
\text{$[8,17]$} & v_5 & 0 & 1 & 1 & 1 & 1 & 1 & 1 & 0 \\
\text{$[10,20]$} & v_6 & 0 & 0 & 1 & 1 & 1 & 1 & 1 & 0 \\
\text{$[16,24]$} & v_7 & 0 & 0 & 0 & 0 & 1 & 1 & 1 & 1 \\
\text{$[22,28]$} & v_8 & 0 & 0 & 0 & 0 & 0 & 0 & 1 & 1 \\
\cline{3-10}
\end{array}$$}
\caption{The augmented adjacency matrix $A(G)$ of the graph $G$ in Example \ref{ex22}}\label{tap2}
\end{table}

\subsection{Structure of the Canonical Sequence for Proper Interval Graphs}

If a graph $G$ is a reduced proper interval graph then the following lemma states that the canonical sequence for $G$ is unique up to reversal. 

\begin{lem}\label{obsseq}
Let $G=(V,E)$ be a proper interval graph and $V=\set{v_1,v_2,\ldots,v_n}$ be a canonical ordering of vertices of $G$. Then the canonical sequence $\s_G$ is independent of proper interval representations that satisfies the given canonical ordering. Moreover $\s_G$ is unique up to reversal for connected reduced proper interval graphs.
\end{lem}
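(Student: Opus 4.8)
The plan is to prove the two assertions separately. Let me restate them: first, that $\s_G$ depends only on the canonical \emph{ordering} (not on the particular proper interval representation realizing that ordering); and second, that for a connected reduced proper interval graph the canonical sequence is unique up to reversal. Throughout I will freely use Theorem~\ref{troberts} and Remark~\ref{remcan}.

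\medskip

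\noindent\textbf{Representation-independence.} First I would fix a canonical ordering $v_1,\dots,v_n$ and argue that $\s_G$ is determined by it. The key observation is that the canonical sequence records, for each pair of indices $i<k$, the relative order of the four endpoints $a_i,b_i,a_k,b_k$; but since we already know $a_1<\cdots<a_n$ and $b_1<\cdots<b_n$, the only genuine freedom is the interleaving of the $a$'s with the $b$'s, i.e.\ for each pair $(i,k)$ whether $a_k<b_i$ or $a_k>b_i$. By the ordering hypothesis and the properness condition (no interval properly contains another, equivalently $a_i\neq b_j$), we have $a_k<b_i \iff I_{v_i}\cap I_{v_k}\neq\emptyset \iff v_iv_k\in E$ (using $i<k$, so $a_i<a_k$ and $b_i<b_k$). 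Thus the position of $a_k$ relative to each $b_i$ is dictated entirely by the adjacency structure of $G$, which is representation-free. Hence the full interleaving pattern — and therefore $\s_G$ — is the same for every proper interval representation respecting the given canonical ordering. This is the clean, short half of the proof.

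\medskip

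\noindent\textbf{Uniqueness up to reversal.} For the second claim I would invoke the cited fact (Corollary~2.5 of~\cite{DHH}, also~\cite{Nu}) that a connected reduced proper interval graph has a canonical ordering of its vertices that is unique up to reversal. Given this, let $v_1,\dots,v_n$ and $v_1',\dots,v_n'$ be two canonical orderings; then either $v_i'=v_i$ for all $i$, or $v_i'=v_{n+1-i}$ for all $i$. In the first case the two orderings coincide and, by the representation-independence just established, they yield the same $\s_G$. In the second case, reversing the order of the vertices reverses the real line (replace each endpoint $x$ by $-x$): the ordering $a_1<\cdots<a_n$, $b_1<\cdots<b_n$ becomes $-b_n<\cdots<-b_1$, $-a_n<\cdots<-a_1$, which is exactly a canonical representation for the reversed ordering, and the merged increasing endpoint sequence is read off in the opposite direction. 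Relabelling then shows $\s_{G}$ for the reversed ordering is the reversal of $\s_G$ for the original. Hence any two canonical sequences are equal or reverse to one another.

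\medskip

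\noindent The step I expect to be the main obstacle is making the reversal argument fully rigorous, specifically tracking how the integer labels $i \mapsto n+1-i$ interact with the ``each integer appears twice'' structure of $\s_G$ so that the reversed sequence is literally the reversal of the original as a sequence of relabelled symbols. The representation-independence step is routine once the adjacency-versus-interleaving equivalence is stated, and the uniqueness of the vertex ordering is imported as a black box from~\cite{DHH}; so the real care is needed only in verifying that reversal of the vertex ordering corresponds precisely to reversal of the canonical sequence, which is a matter of bookkeeping rather than a deep difficulty.
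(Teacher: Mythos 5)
Your proposal is correct and follows essentially the same route as the paper: representation-independence via the equivalence $a_k<b_i \iff v_iv_k\in E$ for $i<k$ (which pins down the interleaving of left and right endpoints), and uniqueness up to reversal by importing the uniqueness of the canonical ordering from \cite{DHH}. The only difference is that you spell out the reversal bookkeeping (negating the real line) that the paper leaves implicit; this is a harmless elaboration, not a different argument.
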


\begin{proof}
Let $\Set{I_{v_i}=[a_i,b_i]}{i=1,2,\ldots,n}$ and $\Set{J_{v_i}=[c_i,d_i]}{i=1,2,\ldots,n}$ be two proper interval representations of $G$ that satisfy the given canonical ordering. Now for any $i<j$, $a_j< b_i$ if and only if $v_i$ is adjacent to $v_j$ if and only if $c_j< d_i$. Thus the canonical sequence $\s_G$ is independent of proper interval graph representations. Now since the canonical ordering is unique up to reversal for a connected reduced proper interval graph, the canonical sequence $\s_G$ is unique up to reversal for connected reduced proper interval graphs.
\end{proof}

\noindent
Now there is an alternative way to get the canonical sequence directly from the augmented adjacency matrix. Let $G=(V,E)$ be a proper interval graph with $V=\Set{v_i}{i=1,2,\ldots,n}$ and $A(G)$ be the augmented adjacency matrix of $G$ with consecutive $1$'s property. We partition positions of $A(G)$ into two sets $(L,U)$ by drawing a polygonal path from the upper left corner to the lower right corner such that the set $L$ [resp. $U$] is closed under leftward or downward [respectively, rightward or upward] movement (called a {\em stair partition} \cite{BDGS,SDRW,SDW}) and $U$ contains precisely all the zeros right to the principal diagonal of $A(G)$ (see Table \ref{stseq}(left)). This is possible due to the consecutive $1$'s property of $A(G)$. Now we obtain a sequence of positive integers belonging to $\set{1,2,\ldots,n}$, each occurs exactly twice, by writing the row or column numbers as they appear along the stair. We call this sequence, the {\em stair sequence} of $A(G)$ (see Table \ref{stseq}(right)) and note that it is same as the canonical sequence of $G$ with respect to the given canonical ordering of vertices of $G$. 

\begin{table}[t]
{\footnotesize
$$\begin{array}{c|cc|ccc|c|c|c|}
 \multicolumn{1}{c}{} & v_1 & \multicolumn{1}{c}{v_2} & v_3 & v_4 & \multicolumn{1}{c}{v_5} & \multicolumn{1}{c}{v_6} & \multicolumn{1}{c}{v_7} & \multicolumn{1}{c}{v_8} \\
\cline{2-9}
 v_1 & 1 & 1 & 0 & 0 & \multicolumn{1}{c}{0} & \multicolumn{1}{c}{0} & \multicolumn{1}{c}{0} & 0 \\
\cline{4-6}
 v_2 & 1 & \multicolumn{1}{c}{1} & 1 & 1 & 1 & \multicolumn{1}{c}{0} & \multicolumn{1}{c}{0} & 0 \\
\cline{7-7}
 v_3 & 0 & \multicolumn{1}{c}{1} & 1 & 1 & \multicolumn{1}{c}{1} & 1 & \multicolumn{1}{c}{0} & 0 \\
 v_4 & 0 & \multicolumn{1}{c}{1} & 1 & 1 & \multicolumn{1}{c}{1} & 1 & \multicolumn{1}{c}{0} & 0 \\
\cline{8-8}
 v_5 & 0 & \multicolumn{1}{c}{1} & 1 & 1 & \multicolumn{1}{c}{1} & \multicolumn{1}{c}{1} & 1 & 0 \\
 v_6 & 0 & \multicolumn{1}{c}{0} & 1 & 1 & \multicolumn{1}{c}{1} & \multicolumn{1}{c}{1} & 1 & 0 \\
\cline{9-9}
 v_7 & 0 & \multicolumn{1}{c}{0} & 0 & 0 & \multicolumn{1}{c}{1} & \multicolumn{1}{c}{1} & \multicolumn{1}{c}{1} & 1 \\
 v_8 & 0 & \multicolumn{1}{c}{0} & 0 & 0 & \multicolumn{1}{c}{0} & \multicolumn{1}{c}{0} & \multicolumn{1}{c}{1} & 1 \\
\cline{2-9}
\end{array}\hspace{0.5in} \begin{array}{c|cc|ccc|c|c|c|c}
\multicolumn{1}{c}{} & v_1 & \multicolumn{1}{c}{v_2} & v_3 & v_4 & \multicolumn{1}{c}{v_5} & \multicolumn{1}{c}{v_6} & \multicolumn{1}{c}{v_7} & \multicolumn{1}{c}{v_8} & \\
\cline{2-9}
 v_1 & 1 & 2 & 1 &  & \multicolumn{1}{c}{} & \multicolumn{1}{c}{} & \multicolumn{1}{c}{} & & \\
\cline{4-6}
 v_2 &  & \multicolumn{1}{c}{} & 3 & 4 & 5 & \multicolumn{1}{c}{2} & \multicolumn{1}{c}{} &  & \\
\cline{7-7}
v_3 &  & \multicolumn{1}{c}{} &  &  & \multicolumn{1}{c}{} & 6 & \multicolumn{1}{c}{3} &  & \\
v_4 &  & \multicolumn{1}{c}{} &  &  & \multicolumn{1}{c}{} &  & \multicolumn{1}{c}{4} &  &\\
\cline{8-8}
v_5 &  & \multicolumn{1}{c}{} &  &  & \multicolumn{1}{c}{} & \multicolumn{1}{c}{} & 7 & 5 &\\
v_6 &  & \multicolumn{1}{c}{} &  &  & \multicolumn{1}{c}{} & \multicolumn{1}{c}{} &  & 6 &\\
\cline{9-9}
v_7 &  & \multicolumn{1}{c}{} &  &  & \multicolumn{1}{c}{} & \multicolumn{1}{c}{} & \multicolumn{1}{c}{} & 8 & 7 \\
v_8 &  & \multicolumn{1}{c}{} &  & & \multicolumn{1}{c}{} & \multicolumn{1}{c}{} & \multicolumn{1}{c}{} &  & 8\\
\cline{2-9}
\end{array}$$}
\caption{The matrix $A(G)$ with its stair partition and the stair sequence $(1\ 2\ 1\ 3\ 4\ 5\ 2\ 6\ 3\ 4\ 7\ 5\ 6\ 8\ 7\ 8)$ of $A(G)$ of the graph $G$ in Example \ref{ex22}.}\label{stseq}
\end{table}

\begin{prop}\label{staircanseq}
Let $G=(V,E)$ be a proper interval graph with a canonical ordering $V=\set{v_1,v_2,\ldots,v_n}$ of vertices of $G$. Let $A(G)$ be the augmented adjacency matrix of $G$ arranging vertices in the same order as in the canonical ordering. Then the canonical sequence $\s_G$ of $G$ is the same as the stair sequence of $A(G)$.
\end{prop}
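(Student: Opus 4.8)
The plan is to show that both sequences are completely determined by the adjacency data in $A(G)$, and that they list the same $2n$ tokens in the same order. First I would record how the sorted order of the $2n$ endpoints is governed by adjacency. Since $a_1<\cdots<a_n$, $b_1<\cdots<b_n$, $a_i<b_i$ and all $2n$ endpoints are distinct, for $i<j$ the intervals $I_{v_i}$ and $I_{v_j}$ meet if and only if $a_j<b_i$; equivalently $v_iv_j\in E$ iff $a_j<b_i$ and $v_iv_j\notin E$ iff $b_i<a_j$. Counting the endpoints preceding $a_k$ (respectively $b_k$) then reduces to adjacency counts, giving that the position of $a_k$ in $\is_G$ (hence in $\s_G$) is $k+|\{i<k:v_iv_k\notin E\}|$ and the position of $b_k$ is $2k+|\{i>k:v_iv_k\in E\}|$. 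In particular $\s_G$ depends only on $A(G)$.

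Next I would unpack the stair sequence directly from the stair partition. For each row $i$ let $r_i$ be the column of the rightmost $1$ in row $i$; by the consecutive $1$'s property the $1$'s of row $i$ occupy a block $[f_i,r_i]\ni i$, and $r_i$ is nondecreasing in $i$ (this is where symmetry of $A(G)$ and $b_i<b_{i+1}$ enter, via $a_{r_i}<b_i<b_{i+1}$). Writing $r_0=0$, the horizontal tread of the stair in row $i$ runs across columns $r_{i-1}+1,\ldots,r_i+1$; reading it left to right, the definition of the stair sequence writes the column index $j$ in each cell $(i,j)$ with $r_{i-1}<j\le r_i$ (these lie in $L$ and carry the left-endpoint token $a_j$) and writes the row index $i$ in the single cell $(i,r_i+1)$ that first enters $U$ (carrying the right-endpoint token $b_i$). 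Because $r_i$ is nondecreasing with $r_0=0$ and $r_n=n$, the intervals $(r_{i-1},r_i]$ partition $\{1,\ldots,n\}$, so each column index $j$ and each row index $i$ is written exactly once; thus the stair sequence already has the correct multiset of tokens, one $a_j$ and one $b_j$ for each $j$.

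It then remains to match positions in the reading order. Reading row by row, the number of tokens in rows $1,\ldots,i-1$ telescopes to $r_{i-1}+(i-1)$. For the token $a_j$, sitting in the unique row $i_0=i_0(j)$ equal to the least index with $v_{i_0}v_j\in E$ (or $i_0=j$ if $v_j$ has no left neighbour), its reading position is $r_{i_0-1}+(i_0-1)+(j-r_{i_0-1})=j+(i_0-1)$; and since the left neighbours of $v_j$ form a suffix $\{i_0,\ldots,j-1\}$ of $\{1,\ldots,j-1\}$ (monotonicity of adjacency from $a_j<b_i\Leftrightarrow v_iv_j\in E$), we get $i_0-1=|\{i<j:v_iv_j\notin E\}|$, matching the sorted position of $a_j$. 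For the token $b_i$, its reading position is $\bigl(r_{i-1}+(i-1)\bigr)+\bigl(r_i-r_{i-1}+1\bigr)=r_i+i$; and since the right neighbours of $v_i$ are exactly $\{i+1,\ldots,r_i\}$, we have $r_i-i=|\{i'>i:v_{i'}v_i\in E\}|$, so $r_i+i=2i+|\{i'>i:v_{i'}v_i\in E\}|$, matching the sorted position of $b_i$. Hence every token occupies the same position in the stair sequence as in $\s_G$, and the two sequences coincide.

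I expect the main obstacle to be the bookkeeping of the second step: pinning down, straight from the stair-partition definition, exactly which index (row or column) is recorded in each cell, together with the monotonicity of $r_i$ that makes the tread decomposition valid and the token count telescope. Once that reading rule is fixed, the position computations in the third step are routine telescoping combined with the adjacency-as-interval facts established in the first step.
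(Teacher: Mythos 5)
Your proof is correct, but it takes a genuinely different route from the paper's. The paper proves the proposition by exhibiting a single explicit proper interval representation read off from $A(G)$ (namely $a_i=i$ and $b_i=U(i)+1-\tfrac{1}{i}$ with $U(i)$ the rightmost $1$ in row $i$, slightly perturbed to make endpoints distinct), observing that the relative order ``$a_j<b_i$ iff $v_iv_j=1$ iff column $j$ precedes row $i$ on the stair'' identifies its canonical sequence with the stair sequence, and then invoking Lemma 2.3 (independence of $\s_G$ from the choice of representation) to conclude. You instead work with an arbitrary canonical representation and compute, for every token, its exact position in both sequences: $a_k$ sits at position $k+\abs{\Set{i<k}{v_iv_k\notin E}}$ and $b_k$ at $2k+\abs{\Set{i>k}{v_iv_k\in E}}$ in $\is_G$, and the tread decomposition of the stair (columns $r_{i-1}+1,\ldots,r_i$ then the corner cell carrying row $i$) yields the same formulas. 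Both arguments ultimately rest on the same pivot, $v_iv_j\in E\Leftrightarrow a_j<b_i$ for $i<j$, but your version is more self-contained --- it does not need the explicit representation imported from the Deng--Hell--Huang construction, and it re-derives the representation-independence of $\s_G$ rather than citing Lemma 2.3 --- at the cost of more bookkeeping; it also has the side benefit of making the informal ``write the row or column numbers as they appear along the stair'' definition precise (your reading rule is the one consistent with the paper's Table 4) and of giving closed-form position formulas that the paper's terse final sentence leaves implicit.
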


\begin{proof}
We show that the stair sequence is a canonical sequence for some proper interval representation and so the proof follows by the uniqueness mentioned in Lemma \ref{obsseq}. Given the matrix $A(G)$, a proper interval representation of $G$ is obtained as follows. Let $a_i=i$ and $b_i=U(i)+1-\frac{1}{i}$ where $U(i)=\max\Set{j}{j\geq i\text{ and }v_iv_j=1\text{ in }A(G)}$ for each $i=1,2,\ldots,n$. Then $\Set{I_{v_i}=[a_i,b_i]}{i=1,2,\ldots,n}$ is a proper interval representation of $G$.~\cite{DHH,HH} If $U(1)>1$, to make all the endpoints distinct, we slightly increase the value of $b_1$ (which is the only integer valued right endpoint and is equal to $a_{U(1)}$) so that it is still less than its nearest endpoint which is greater than it. Now we get a proper interval representation of $G$ that satisfies the condition \ref{order} of Theorem \ref{troberts}. Then the canonical sequence merges with the stair sequence for this proper interval representation of $G$ as for $i<j$, $a_j=j<b_i$ if and only if $v_iv_j=1$ if and only if the column number $j$ appears before the row number $i$ in the stair sequence.
\end{proof}

\begin{cor}\label{corunique}
Let $G=(V,E)$ be a connected proper interval graph. Then $\s_G$ is unique up to reversal.
\end{cor}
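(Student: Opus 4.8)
My plan is to reduce the statement to the reduced-graph case already handled in Lemma~\ref{obsseq}, isolating exactly where the extra freedom in a non-reduced graph lies. By Lemma~\ref{obsseq}, for a \emph{fixed} canonical ordering the sequence $\s_G$ does not depend on the chosen proper interval representation, so the only possible source of non-uniqueness is the choice of canonical ordering itself. Hence the whole task is to understand how $\s_G$ changes as we range over all canonical orderings of the connected graph $G$.

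The next step is to classify those orderings. In any canonical ordering the vertices of a block (same closed neighborhood) occupy consecutive positions, and the induced linear ordering on the blocks is precisely a canonical ordering of the reduced graph $\widetilde{G}$. Since $G$ is connected, $\widetilde{G}$ is a connected reduced proper interval graph, so by Lemma~\ref{obsseq} its canonical ordering is unique up to reversal. Consequently any two canonical orderings of $G$ differ only by (i) a global reversal, and/or (ii) permutations of vertices within individual blocks (which always preserve the closed-neighborhood condition, being internal to a block). A global reversal sends $\s_G$ to its reversal in the sense of reading the sequence backwards and relabelling each index $i$ by $n+1-i$ (this is immediate from reflecting the representation, which exchanges left and right endpoints). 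Thus it remains only to show that an internal permutation of a block leaves $\s_G$ literally unchanged.

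This last point is the crux of the argument. Fix a block $B$ occupying positions $k,k+1,\ldots,k+m-1$. Two facts hold: $B$ is a clique (its vertices pairwise overlap), and every vertex outside $B$ is adjacent either to all of $B$ or to none of it. Using these together with the canonical inequalities $a_k<\cdots<a_{k+m-1}$ and $b_k<\cdots<b_{k+m-1}$, I would verify that no endpoint of a vertex outside $B$ can fall strictly between two left endpoints of $B$, nor strictly between two right endpoints of $B$ (the ``all-or-none'' adjacency pushes any offending endpoint past the whole block). The clique property also forces every right endpoint of $B$ to exceed every left endpoint of $B$. Therefore in $\is_G$ the $m$ left endpoints of $B$ form one consecutive run and the $m$ right endpoints form another. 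Within each such run the indices read $k,k+1,\ldots,k+m-1$ by the canonical monotonicity of the endpoints, and this pattern depends only on $k$ and $m$, not on which member of $B$ occupies which position. Hence permuting $B$ internally does not alter $\s_G$ at all.

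Combining the three observations, every canonical ordering of $G$ yields the same $\s_G$ up to reversal, which is the assertion of the corollary. I expect step~three---the endpoint-clustering claim, and the care needed to see that $\s_G$ is insensitive to the internal block order because it records \emph{positional} indices---to be the main obstacle; the reduction in steps one and two is routine once one invokes the block structure and Lemma~\ref{obsseq} for $\widetilde{G}$.
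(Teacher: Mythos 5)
Your proposal is correct and follows essentially the same route as the paper: both arguments reduce to the uniqueness (up to reversal) of the canonical ordering of the reduced graph $\widetilde{G}$ via Lemma~\ref{obsseq} and then show that permuting vertices within a block leaves $\s_G$ unchanged. The only difference is in how that invariance is verified---the paper notes that block members have identical rows in $A(G)$, so the stair sequence (Proposition~\ref{staircanseq}) is untouched and $\s_G$ arises from $\s_{\widetilde{G}}$ by a fixed substitution, whereas you argue directly that a block's left endpoints and right endpoints each form consecutive runs in $\is_G$; both verifications are sound.
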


\begin{proof}
By Lemma \ref{obsseq}, the result is true if $G$ is reduced. Suppose $G$ is not reduced and $\widetilde{G}=(\widetilde{V},\widetilde{E})$ be the reduced graph of $G$ having vertices $\tilde{v_1}, \dots, \tilde{v_t}$ corresponding to the blocks $B_1, \dots, B_t$ of $G$. Now $\widetilde{G}$ has unique canonical ordering of vertices up to reversal. Consider any of these orderings. When blocks of $G$ are arranged according to this order in its augmented adjacency matrix $A(G)$, we have same rows (and hence same columns) for vertices in the same block. Thus permutation of vertices within the same block does not change the matrix. Let $b_i=|B_i|$ for each $i=1,2,\ldots,t$. Then considering the stair sequences of $G$ and $\widetilde{G}$, it is clear that $\s_G$ is obtained uniquely from $\s_{\widetilde{G}}$ by replacing each $i$ by the subsequence 
$$(\sum\limits_{j=1}^{i-1} b_j+1,\sum\limits_{j=1}^{i-1} b_j+2,\ldots,\sum\limits_{j=1}^{i-1} b_j+b_i)$$
irrespective of the permutations of vertices within a block.
\end{proof}

\begin{rem}\label{nonreduced}
Let $G$ be a connected proper interval graph which is not reduced and $\widetilde{G}$ be the reduced graph of $G$. Then the graph $\widetilde{G}$ has a unique (up to reversal) canonical ordering of vertices, say, $b_1, \dots, b_t$ (corresponding to the blocks $B_1, \dots, B_t$) as it is connected and reduced. Now the canonical orderings of the vertices of $G$ are obtained from this ordering (and its reversal) by all possible permutation of the vertices of $G$ within each block. In all such cases $\s_G$ will remain same up to reversal.
\end{rem}

\noindent
We note that Corollary \ref{corunique} is analogous to Lemma 1 of \cite{SYKU} or Corollary 2.5 of \cite{DHH}. Any of these results enables one to calculate that the number of non-isomorphic connected proper interval graphs with $n+1$ vertices is $\displaystyle{\frac{1}{2}\left[C(n)+\binom{n}{\lfloor n/2 \rfloor}\right]}$, where $\displaystyle{C(n)=\frac{1}{n+1}\binom{2n}{n}}$ is the Catalan number. It is mentioned in \cite{SYKU} that the result was communicated personally by A. Karttunen in 2002.

\section{Structure of PTPIG}\label{s:structure}

Let us recall the definition of a proper tagged probe interval graph.

\begin{defn}\label{ptpigdef}
A tagged probe interval graph $G=(P,N,E)$ is a {\em proper tagged probe interval graph} (briefly PTPIG) if $G$ has a TPIG representation $\Set{I_x}{x\in P\cup N}$ such that $\Set{I_p}{p\in P}$ is a proper interval representation of $G_P$. We call such an interval representation a {\em PTPIG representation} of $G$. 
\end{defn}

\noindent
It is interesting to note that there are examples of $TPIG$, $G$ for which $G_P$ is a proper interval graph but $G$ is not a $PTPIG$. For example, the graph $G_b$ (see Figure \ref{fig33}) in \cite{LP} is a TPIG in which $(G_b)_P$ consists of a path of length $4$ along with $2$ isolated vertices which is a proper interval graph. But $G_b$ has no TPIG representation with a proper interval representation of $(G_b)_P$. 

\begin{figure}[t]
\begin{center}
\includegraphics[scale=1]{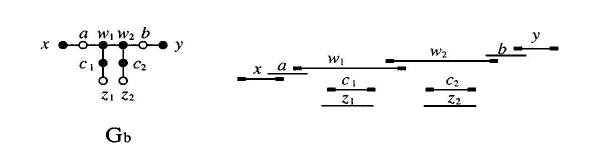}
\caption{The graph $G_b$ and its TPIG representation \cite{LP}}\label{fig33}
\end{center}
\end{figure}

\vspace{1em}\noindent
Now let us consider a graph $G=(V,E)$, in general, with an independent set $N$ and $P=V\smallsetminus N$ such that the subgraph $G_P$ of $G$ induced by $P$ is a proper interval graph. Let us order the vertices of $P$ in a canonical ordering. Now the adjacency matrix of $G$ looks like the following: 

{\tiny $$\begin{array}{cc|ccc|ccc|c}
\multicolumn{3}{c}{} & P & \multicolumn{2}{c}{} &N& \multicolumn{2}{c}{}\\ \cline{3-8}
 &&&  &&&  &&\\ 
P &&& A(P) &&& A
 (P,N) &&\\ 
 &&&  &&&  &&\\ \cline{3-8}
 &&&  &&&  &&\\ 
N &&& A(P,N)^T &&& \mathbf{0} &&\\
 &&&  &&&  &&\\ \cline{3-8}
\end{array}$$}

\noindent
Note that the (augmented) adjacency matrix $A(P)$ of $G_P$ satisfies the consecutive $1$'s property and the $P\times N$ submatrix $A(P,N)$ of the adjacency matrix of $G$ represents edges between probe vertices and nonprobe vertices. In the following lemma we obtain a necessary condition for a PTPIG.

\begin{lem}\label{lem1}
Let $G=(P,N,E)$ be a PTPIG. Then for any canonical ordering of the vertices belonging to $P$ each column of $A(P,N)$ cannot have more than two consecutive stretches of $1$'s. 
\end{lem}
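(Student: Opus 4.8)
The plan is to argue directly from a PTPIG representation and read the column structure of $A(P,N)$ off the two monotone sequences of interval endpoints. By Definition~\ref{ptpigdef} the hypothesis gives a TPIG representation $\Set{I_x}{x\in P\cup N}$ whose restriction to $P$ is a proper interval representation of $G_P$. Fixing such a representation, I would use Theorem~\ref{troberts}(\ref{order}) to assume the probe intervals are $I_{u_i}=[a_i,b_i]$ with $a_1<a_2<\cdots<a_p$ and $b_1<b_2<\cdots<b_p$, and list the rows of $A(P,N)$ in this order $u_1,\dots,u_p$. The goal is then to bound, for each fixed nonprobe, the number of maximal runs of $1$'s in its column.

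The engine is a single observation about one column. Fix a nonprobe $w_j$ with interval $I_{w_j}=[\ell_{w_j},r_{w_j}]$. By condition~(3) of Definition~\ref{defn1}, $u_i$ is adjacent to $w_j$ exactly when $a_i\in I_{w_j}$ or $b_i\in I_{w_j}$, so the $1$'s of this column occupy $S_a\cup S_b$, where $S_a=\Set{i}{a_i\in I_{w_j}}$ and $S_b=\Set{i}{b_i\in I_{w_j}}$. Since $a_1<\cdots<a_p$ is strictly increasing and $I_{w_j}$ is an interval, $S_a$ is a block of consecutive indices: if $i_1<i<i_2$ with $i_1,i_2\in S_a$, then $a_{i_1}\leq a_i\leq a_{i_2}$ forces $a_i\in I_{w_j}$. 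The same argument on $b_1<\cdots<b_p$ shows $S_b$ is a block of consecutive indices as well. A union of two index-intervals meets the ambient linear order in at most two maximal runs, so the column has at most two consecutive stretches of $1$'s. I expect this quantitative core to be entirely routine.

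The step I would budget the most care for is the quantifier \emph{for any canonical ordering}. A proper interval graph generally has many canonical orderings, so I would first secure one compatible with the representation: by Theorem~\ref{troberts} and Remark~\ref{remcan} the left-endpoint order above is canonical, and it is precisely the order for which the endpoint computation is literally valid. To reach the general statement I would invoke the description of canonical orderings in Remark~\ref{nonreduced}, namely reversal together with permutations of vertices within each block of $G_P$. Reversal is harmless, as it merely reverses every column and preserves the run count. The genuinely delicate case is permuting vertices inside a block of $G_P$: two probes in the same block of $G_P$ share a closed neighbourhood \emph{within} $P$ but may have different adjacencies to a given nonprobe, so such a permutation really does reorder the rows of $A(P,N)$. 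Consequently the heart of the lemma is understanding to what extent an arbitrary canonical ordering of $P$ can be matched by an actual PTPIG representation (so that the endpoint argument can be re-run), and this matching between orderings and representations is where I expect the real work to lie.
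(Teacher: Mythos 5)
Your quantitative core is correct and is a genuinely different (and slicker) route than the paper's. The paper argues by contradiction: it extracts five probes $u_{i_1}<u_{i_2}<\cdots<u_{i_5}$ witnessing three stretches, with $u_{i_1},u_{i_3},u_{i_5}$ adjacent to $w_j$ and $u_{i_2},u_{i_4}$ not, and then runs a four-case analysis on which endpoints of $I_{u_{i_1}}$ and $I_{u_{i_5}}$ lie in $I_{w_j}$, reaching a contradiction in each case. Your observation --- that the set of $1$'s in the column of $w_j$ is $S_a\cup S_b$, that each of $S_a$ and $S_b$ is an interval of indices because the $a_i$'s and the $b_i$'s are monotone and $I_{w_j}$ is an interval, and that a union of two index-intervals meets the linear order in at most two maximal runs --- gets the same bound in one stroke, with no cases. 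Both arguments prove exactly the same thing: the bound for the canonical ordering \emph{induced by} the chosen PTPIG representation (and its reversal).

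On the quantifier ``for any canonical ordering'': you are right that this is the delicate point, but you should know that the paper does not do the ``real work'' you are budgeting for either. Its proof simply writes ``by Theorem~\ref{troberts} we have $\ell_1<\cdots<\ell_5$'', i.e.\ it silently assumes that the given canonical ordering agrees with the left-endpoint order of the representation, which is precisely the compatible case your argument already covers. Moreover, no argument can close this gap, because the statement with ``for any'' is false when $G_P$ is not reduced. Take $P=\{u_1,\dots,u_5\}$ inducing $K_5$, with intervals $[1,11],[2,12],[3,13],[4,14],[5,15]$ assigned to $u_2,u_1,u_3,u_5,u_4$ respectively, and a single nonprobe $w$ with $I_w=[2,4]$: this is a PTPIG in which $w$ is adjacent exactly to $u_1,u_3,u_5$; every ordering of the vertices of $K_5$ is canonical; and the ordering $u_1,u_2,u_3,u_4,u_5$ produces the column $(1,0,1,0,1)^T$ with three stretches. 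So the lemma is only true --- and, since Theorem~\ref{main} is existential over canonical orderings, only ever needed --- in the form ``there is a canonical ordering, namely one induced by a PTPIG representation, for which every column has at most two stretches,'' and for that statement your proof is already complete.
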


\begin{proof}
Let us prove by contradiction.  Consider a canonical ordering of vertices belonging to $P$, say, $\set{u_1,u_2,\ldots, u_m}$.  Let $w_j$ be a vertex in $N$ such that in the matrix $A(P,N)$ the column corresponding to $w_j$ has at least three consecutive stretches of $1$'s. That is, there are five vertices in $P$, say $u_{i_1}, u_{i_2}, u_{i_3}, u_{i_4}$ and $u_{i_5}$ (with $i_1,i_2,i_3, i_4, i_5\in\set{1,2,\ldots, m}$) such that $i_1<i_2<i_3<i_4<i_5$ and  $u_{i_1}, u_{i_3}$ and $u_{i_5}$ are neighbors of $w_j$  while $u_{i_2}$ and $u_{i_4}$ are not neighbors of $w_j$.  Now let us prove its impossibility. We prove it case by case. 

\vspace{1em}\noindent
Let the interval corresponding to the vertex $v_{i_k}$ be $I_{v_{i_k}}=[\ell_{k}, r_{k}]$ for $k=1,2,3,4,5$ in a PTPIG representation. Now by Theorem \ref{troberts}, we have $\ell_1< \ell_2 < \ell_3 < \ell_4 < \ell_5$ and $r_1<r_2<r_3<r_4 < r_5$. Since $G$ is a PTPIG, either $\ell_i\in I_{w_j}$ or $r_i\in I_{w_j}$ for each $j=1,3,5$. 

\begin{enumerate}
\item[Case 1] \textbf{($\ell_1,\ell_5\in I_{w_j}$):} In this case, for $t$ such that $i_1 \leq t \leq i_5$, we have $\ell_t\in I_{w_j}$. In particular we have $\ell_{2}$ and  $\ell_{4}$ are in $I_{w_j}$, i.e., $u_{i_2}$ and $u_{i_4}$ are neighbors of $w_j$ which is a contradiction.   
\item[Case 2] \textbf{($r_1,r_5\in I_{w_j}$):} In this case, for all  $t$ such that $i_1 \leq t \leq i_5$, we have $r_t\in I_{w_j}$. And again here we have a contradiction just like the previous case.
\item[Case 3] \textbf{($\ell_1,r_5\in I_{w_j}$ but $r_1,\ell_5\notin I_{w_j}$):} Let $I_{w_j}$ be $[\ell_{w_j}, r_{w_j}]$. So in this case, $a_5 < \ell_{w_j}\leqslant a_1$ which is a contradiction. 
%\item[Case 4] \textbf{($\ell_1,r_5\not\in I_{w_j}$ but $r_1,\ell_5\in I_{w_j}$):} This condition is similar to the previous condition. 
\item [Case 4] \textbf{($r_1,\ell_5\in I_{w_j}$):}  If $\ell_3\in I_{w_j}$, then $\ell_t\in I_{w_j}$ for all $t \in  \{i_3,\ldots, i_5\}$ and this would mean that $\ell_4\in I_{w_j}$ which is a contradiction. Similarly,  if $r_3\in I_{w_j}$, then $r_t\in I_{w_j}$ for all $t\in\{i_1,\ldots, i_3\}$ and then $r_2\in I_{w_j}$ which also gives a contradiction. 
\end{enumerate}
Note that all the cases are taken care of and thus each column of $A(P,N)$ cannot have more than two consecutive stretches of $1$'s. 
\end{proof}

\noindent
Unfortunately the condition in the above lemma is not sufficient as the following example shows. For convenience, we say an interval $I_p=[a,b]$ {\em contains strongly} an interval $I_n=[c,d]$ if $a<c\leqslant d<b$, where $p\in P$ and $n\in N$.\footnote{In \cite{LP2}, Sheng et al. used the term ``contains properly'' in this case. Here we consider a different term in order to avoid confusion with the definition of proper interval graph. Note that if $a\leqslant c\leqslant d<b$ or $a<c\leqslant d\leqslant b$, then also $I_p$ contains $I_n$ properly, but not strongly.}

\begin{exmp}\label{ex1}
Consider the graph $G=(V,E)$ with an independent set $N=\set{n_1,n_2}$ and $P=V \smallsetminus N =\set{p_1,p_2,\ldots ,p_6}$, where the matrices $A(P)$ and $A(P,N)$ are given in Table \ref{tapex1}. We note that $A(P)$ satisfies consecutive $1$'s property. So $G_P$ is a proper interval graph by Theorem \ref{troberts}. Also note that $G_P$ is connected and reduced. Thus the given ordering and its reversal are the only canonical ordering of vertices of $G_P$ by Lemma \ref{obsseq}. Suppose $G$ is a PTPIG with an interval representation $\Set{I_x}{x\in V}$, where $I_{p_i}=[a_i,b_i]$ and $I_{n_1}=[c,d]$ such that $a_1< a_2<\cdots <a_6$ and $b_1< b_2<\cdots <b_6$. 

\vspace{1em}\noindent
Since $p_2n_1=1$, we have either $a_2\in [c,d]$ or $b_2\in [c,d]$. Let $a_2\in [c,d]$. Now $a_4\in [c,d]$ implies $a_3\in [c,d]$. But $p_3n_1=0$, which is a contradiction. Thus $b_4\in [c,d]$ and $a_4\notin [c,d]$. But then we have $a_4<c\leqslant a_2$ as $a_4<b_4$, which is again a contradiction. Again $b_2,b_4\in [c,d]$ would imply $b_3\in [c,d]$ and consequently $p_3n_1=1$. Thus we must have $b_2,a_4\in [c,d]$ and $[a_3,b_3]$ contains $[c,d]$ strongly as $a_2<a_3<a_4$ and $b_2<b_3<b_4$. Again $p_5n_1=1$ implies $[c,d]\cap\set{a_5,b_5}\neq\emptyset$. But $p_3p_5=0$ which implies $[a_3,b_3]\cap [a_5,b_5]=\emptyset$. Thus $a_3<b_3 < a_5 < b_5$ as $a_3 <a_5$. But then $[c,d]\cap\set{a_5,b_5} = \emptyset$ (as $[c,d]\subsetneqq [a_3,b_3]$) which is a contradiction. Similar contradiction would arise if one considers the reverse ordering of vertices of $G_P$. Therefore $G$ is not a PTPIG, though each column of $A(P,N)$ does not have more than two consecutive stretches of $1$'s.
\end{exmp}

\begin{table}
{\footnotesize
$$\begin{array}{c|cccccc|}
\multicolumn{1}{c}{} & p_1 & p_2 & p_3 & p_4 & p_5 & \multicolumn{1}{c}{p_6} \\
\cline{2-7}
p_1 & 1 & 1 & 0 & 0 & 0 & 0 \\
p_2 & 1 & 1 & 1 & 1 & 0 & 0 \\
p_3 & 0 & 1 & 1 & 1 & 0 & 0 \\
p_4 & 0 & 1 & 1 & 1 & 1 & 0 \\
p_5 & 0 & 0 & 0 & 1 & 1 & 1 \\
p_6 & 0 & 0 & 0 & 0 & 1 & 1 \\
\cline{2-7}
\end{array}\qquad \begin{array}{c|cc|}
\multicolumn{1}{c}{} & n_1 & \multicolumn{1}{c}{n_2} \\
\cline{2-3}
p_1 & 0 & 0 \\
p_2 & 1 & 0 \\
p_3 & 0 & 0 \\
p_4 & 1 & 1 \\
p_5 & 1 & 1 \\
p_6 & 0 & 1 \\
\cline{2-3}
\end{array}$$}
\caption{The matrix $A(P)$ (left) and $A(P,N)$ (right) of the graph $G$ in Example \ref{ex1}}\label{tapex1}\label{ex1tab}
\end{table}

\noindent
The following is a characterization theorem for a PTPIG. For convenience, henceforth, a continuous stretch (subsequence) in a canonical sequence will be called a {\em substring}.

\begin{thm}\label{main}
Let $G=(V,E)$ be a graph with an independent set $N$ and $P=V \smallsetminus N$ such that $G_P$, the subgraph induced by $P$ is a proper interval graph. Then $G$ is a proper tagged probe interval graph with probes $P$ and nonprobes $N$ if and only if there is a canonical ordering of vertices belonging to $P$ such that the following condition holds:
\begin{enumerate}
\item[{\em {\bf (A)}}] for every nonprobe vertex $w\in N$, there is a substring in the canonical sequence with respect to the canonical ordering such that all the vertices in the substring are neighbors of $w$ and all the neighbors of $w$ are present at least once in the substring.
\end{enumerate}\label{condn34}
\end{thm}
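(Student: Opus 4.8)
The plan is to reduce both implications to a single dictionary between intervals on the real line and \emph{substrings} of the canonical sequence. I would first fix a canonical ordering $u_1,\dots,u_p$ of $P$ together with a proper interval representation $\{[a_i,b_i]\}$ of $G_P$ realizing it, chosen (via Theorem \ref{troberts}(\ref{order})) so that all $2p$ endpoints are distinct with $a_1<\dots<a_p$ and $b_1<\dots<b_p$. By Lemma \ref{obsseq} the resulting canonical sequence $\mathcal{S}$ depends only on the ordering, and by construction it lists the $2p$ endpoints in increasing real-line order, the two occurrences of $i$ marking $a_i$ and $b_i$. The dictionary I would establish is: for any closed interval $J=[c,d]$ whose two endpoints avoid all probe endpoints, the probe endpoints lying in $J$ form a contiguous block of $\mathcal{S}$, i.e.\ a substring, and a probe $u_i$ contributes an endpoint to $J$ --- equivalently occurs in that substring --- iff $a_i\in J$ or $b_i\in J$. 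Since in a PTPIG representation $u_i\sim w$ exactly when $I_w$ contains $a_i$ or $b_i$, taking $J=I_w$ makes ``$u_i$ is a neighbour of $w$'' synonymous with ``$u_i$ occurs in the substring cut out by $I_w$''. Condition (A) is then precisely the assertion that this substring consists of exactly the neighbours of $w$.

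For the forward direction I would start from a PTPIG representation $\{I_x\}$. Sorting the probe intervals by left endpoint --- properness forces the right endpoints into the same order, and ties occur only within a block --- and applying an arbitrarily small perturbation (smaller than the least gap between distinct endpoint values, hence preserving every probe--probe and probe--nonprobe adjacency) puts the probe part into the form of Theorem \ref{troberts}(\ref{order}); this yields a canonical ordering of $P$ with canonical sequence $\mathcal{S}$. For each $w\in N$ I would nudge the two endpoints of $I_w$ off any probe endpoint on which they happen to sit --- outward if that probe is a neighbour of $w$, inward otherwise --- so that $I_w$ avoids all probe endpoints while still containing $a_i$ or $b_i$ iff $u_i\sim w$. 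The dictionary then shows the substring cut out by $I_w$ contains exactly the neighbours of $w$, each at least once, which is condition (A).

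For the converse I would assume condition (A) holds for some canonical ordering and take the proper interval representation of $G_P$ realizing the corresponding $\mathcal{S}$ (for instance the one built explicitly in Proposition \ref{staircanseq}). Condition (A) furnishes, for each $w\in N$, a substring $T_w$ of $\mathcal{S}$ whose occurring probes are exactly the neighbours of $w$. Reading $T_w$ back as a block of consecutive endpoints on the line, I would set $I_w=[c_w,d_w]$ with $c_w$ placed strictly between the endpoint just before the block and its first endpoint, and $d_w$ strictly between its last endpoint and the endpoint just after it (an isolated nonprobe, for which $T_w$ is empty, is given an interval disjoint from all probe endpoints). By the dictionary, $I_w$ contains an endpoint of $I_{u_i}$ iff $u_i$ occurs in $T_w$ iff $u_i\sim w$, so the probe--nonprobe edges are realized correctly; the probe--probe edges hold since we began with a proper interval representation of $G_P$; and $N$ is independent by hypothesis, so nonprobe--nonprobe pairs impose no constraint. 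This exhibits a PTPIG representation of $G$.

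The main obstacle, and essentially the only delicate point, is the genericity bookkeeping that makes endpoint-containment coincide \emph{exactly} with substring membership: I must ensure that no endpoint of any $I_w$ ever lies on a probe endpoint, and that each perturbation used to achieve this is oriented so as to neither create nor destroy a prescribed adjacency. I expect to control this by taking all perturbations below the minimum spacing of the finitely many distinct endpoint values and orienting each according to the desired adjacency, which preserves the combinatorial type of the whole configuration. A secondary point is the non-reduced case: when $G_P$ has nontrivial blocks the canonical ordering is not unique, but Remark \ref{nonreduced} guarantees $\mathcal{S}$ is unchanged up to reversal under permutations within a block, so both condition (A) and the construction above are insensitive to that freedom.
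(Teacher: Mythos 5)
Your proposal is correct and follows essentially the same route as the paper: both directions rest on the observation that the probe endpoints lying in a nonprobe's interval form a consecutive block of the increasing endpoint sequence, i.e.\ a substring of the canonical sequence, and your sufficiency construction reads the nonprobe intervals back off from the guaranteed substrings exactly as the paper does (the paper simply uses the positions $1,\dots,2p$ in $S$ as the real-line coordinates). The only difference is that you make explicit the perturbation needed to bring an arbitrary PTPIG representation into the strict form of Theorem~\ref{troberts}(\ref{order}), a step the paper's necessity argument passes over silently.
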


\begin{proof} {\em Necessary condition:}\ Let $G=(V,E)$ be a PTPIG with probes $P$ and nonprobes $N$ such that $V=P\cup N$. Let $\Set{I_x=[\ell_x,r_x]}{x\in V}$ be a PTPIG representation of $G$ such that $\Set{I_u}{u\in P}$ be a proper interval representation of $G_P$. Then a probe vertex $u\in P$ is adjacent to $w\in N$ if and only if $\ell_u\in I_w$ or $r_u\in I_w$. Let $u_1,u_2,\ldots,u_p$ be a canonical ordering of vertices in $P$ that satisfies the conditions of Theorem \ref{troberts}. Now consider the corresponding canonical sequence $\s_{G_P}$, i.e., the combined increasing sequence of $\ell_{u_i}$ and $r_{u_i}$ for $i=1,2,\ldots,p$. Since the sequence is increasing and $I_w$ is an interval all the $\ell_{u_i}$'s and $r_{u_i}$'s which are belonging to $I_w$ occur consecutively in that sequence. Thus for any $w\in N$ there exists a substring of $\s_{G_P}$ such that all the vertices in the substring are neighbors of $w$ and all the neighbors of $w$ are present at least once in the substring. 

\vspace{1em}\noindent
{\em Sufficiency condition:}\ Let $G=(V,E)$ be a graph with an independent set $N$ and $P=V \smallsetminus N$ such that $G_P$, the subgraph induced by $P$ is a proper interval graph, $P=\set{u_1,u_2,\ldots,u_p}$ and $N=\set{w_1,w_2,\ldots,w_q}$. Suppose there is a canonical ordering $u_1,u_2,\ldots,u_p$ of vertices belonging to $P$ such that for any nonprobe veretex $w\in N$, there is a substring in the canonical sequence $S=\s_{G_P}$ with respect to this canonical ordering such that all the vertices in the substring are neighbors of $w$ and all the neighbors of $w$ are present at least once in the substring. Let us count the positions of each element in $S$ from $1$ to $2p$. Now for each probe vertex $u_i$, we assign the closed interval $[\ell_{u_i}, r_{u_i}]$ such that $\ell_{u_i}$ and $r_{u_i}$ are position numbers of first and second occurrences of $i$ in $S$ respectively. By definition of a canonical sequence, we have $\ell_{u_1}<\ell_{u_2}<\cdots <\ell_{u_p}$ and $r_{u_1}<r_{u_2}<\cdots <r_{u_p}$. Also since all position numbers are distinct, $\ell_{u_i}\neq r_{u_j}$ for all $i,j\in\set{1,2,\ldots,p}$. Thus this interval representation obeys the given canonical ordering of vertices belonging to $P$ and by construction the canonical sequence with respect to it is same as $S$. 

\vspace{1em}\noindent
We show that this interval representation is indeed an interval representation of $G_P$ which is proper. Let $i<j$, $i,j\in\set{1,2,\ldots,p}$. Then $\ell_{u_i}<\ell_{u_j}$ and $r_{u_i}<r_{u_j}$. Thus none of $[\ell_{u_i}, r_{u_i}]$ and $[\ell_{u_j}, r_{u_j}]$ contains other properly. Now $u_i$ is adjacent to $u_j$ in $G_P$ if and only if $u_iu_j=1$ in $A(P)$ when vertices of $A(P)$ are arranged as in the given canonical ordering. Again $u_iu_j=1$ with $i<j$ if and only if $j$ is lying between two occurrences of $i$ in the stair sequence of $A(P)$ and hence in $S$ by Proposition \ref{staircanseq}. Also since $i<j$, the second occurrence of $j$ is always after the second occurrence of $i$ in $S$. Thus $u_iu_j=1$ with $i<j$ if and only if $\ell_{u_j}\in [\ell_{u_i},r_{u_i}]$. This completes the verification that $\Set{[\ell_{u_i}, r_{u_i}]}{i=1,2,\ldots,p}$ is a proper interval representation of $G_P$ and that corresponds to $S$. 

\vspace{1em}\noindent
Next for each $j=1,2,\ldots,q$, consider the substring in the canonical sequence $S$ such that all the vertices in the substring are neighbors of $w_j$ and all the neighbors of $w_j$ are present at least once in the substring. Let the substring start at $\ell_{w_j}$ and end at $r_{w_j}$ in $S$. Then we assign the interval $[\ell_{w_j}, r_{w_j}]$ to the vertex $w_j$. If $w_j$ is an isolated vertex, then we assign a closed interval whose endpoints are greater than $\ell_{u_i}$ and $r_{u_i}$ for all $i=1,2,\ldots,p$. Now all we need to show is that $\Set{[\ell_{u_i}, r_{u_i}]}{i=1,2,\ldots,p}\cup \Set{[\ell_{w_j}, r_{w_j}]}{j=1,2,\ldots,q}$ is a PTPIG representation of $G$, i.e., if $u_i$ is a probe vertex and $w_j$ is a nonprobe vertex then there is an edge between them if and only if either $\ell_{u_i} \in [\ell_{w_j}, r_{w_j}]$ or $r_{u_i} \in [\ell_{w_j}, r_{w_j}]$. 

\vspace{1em}\noindent
First let us assume that there is an edge between $u_i$ and $w_j$. So the vertex $u_i$ must be present in the substring of $S$ that contains all the neighbors of $w_j$ and contains only the neighbors of $w_j$. Since $\ell_{w_j}$ and $r_{w_j}$ are the beginning and ending positions of the substring respectively, either $\ell_{u_i}$ or  $r_{u_i}$ must be in the interval $[\ell_{w_j}, r_{w_j}]$. Conversely, let either $\ell_{u_i} \in [\ell_{w_j}, r_{w_j}]$ or $r_{u_i} \in [\ell_{w_j}, r_{w_j}]$.  Then we have either $\ell_{u_i}$ or $r_{u_i}$ must be present in the substring. Since the substring contains vertices that are neighbors of $w_j$, we have $u_i$ must be a neighbor of $w_j$. 
\end{proof}

\begin{rem}\label{remuniq}
If $G$ is a PTPIG such that $G_P$ is connected and reduced, then there is a unique (up to reversal) canonical ordering of vertices belonging to $P$, as we mentioned at the beginning of Section~\ref{sproper}. Thus the corresponding canonical sequence is also unique up to reversal. Also if condition (A) holds for a canonical sequence, it also holds for its reversal. Thus in this case condition (A) holds for {\bf any} canonical ordering of vertices belonging to $P$.
\end{rem}

\noindent
\begin{table}[hb]
\begin{center}
{\footnotesize
$\begin{array}{c|cccccccc|}
\multicolumn{1}{c}{} & p_1 & p_2 & p_3 & p_4 & p_5 & p_6 & p_7 & \multicolumn{1}{c}{p_8} \\
\cline{2-9}
 p_1 & 1 & 1 & 0 & 0 & 0 & 0 & 0 & 0 \\
 p_2 & 1 & 1 & 1 & 1 & 1 & 0 & 0 & 0 \\
 p_3 & 0 & 1 & 1 & 1 & 1 & 1 & 0 & 0 \\
 p_4 & 0 & 1 & 1 & 1 & 1 & 1 & 0 & 0 \\
 p_5 & 0 & 1 & 1 & 1 & 1 & 1 & 1 & 0 \\
 p_6 & 0 & 0 & 1 & 1 & 1 & 1 & 1 & 0 \\
 p_7 & 0 & 0 & 0 & 0 & 1 & 1 & 1 & 1 \\
 p_8 & 0 & 0 & 0 & 0 & 0 & 0 & 1 & 1 \\
\cline{2-9}
\end{array}$\qquad
$\begin{array}{c|cccccc|}
\multicolumn{1}{c}{} & n_1 & n_2 & n_3 & n_4 & n_5 & \multicolumn{1}{c}{n_6} \\
\cline{2-7}
p_1 & 0 & 0 & 0 & 1 & 0 & 1 \\
p_2 & 1 & 1 & 0 & 1 & 0 & 1 \\
p_3 & 0 & 1 & 0 & 1 & 0 & 1 \\
p_4 & 0 & 1 & 1 & 1 & 0 & 1 \\
p_5 & 1 & 1 & 1 & 1 & 0 & 1 \\
p_6 & 1 & 1 & 1 & 1 & 0 & 1 \\
p_7 & 0 & 0 & 1 & 0 & 0 & 1 \\
p_8 & 0 & 0 & 1 & 0 & 0 & 1 \\
\cline{2-7}
\end{array}$}
\caption{The matrix $A(P)$ and $A(P,N)$ of the graph $G$ in Example \ref{ex2}}\label{tap}
\end{center}
\end{table}

\begin{table}[hb]
{\tiny
$$\begin{array}{cc|cccccccc|cccccc|}
& \multicolumn{1}{c}{} & [1,3] & [2,7] & [4,9] & [5,10] & [6,12] & [8,13] & [11,15] & \multicolumn{1}{c}{[14,16]} & [6,8] & [4,10] & [10,16] & [1,10] & [17,17] & \multicolumn{1}{c}{[1,16]} \\
& \multicolumn{1}{c}{} & p_1 & p_2 & p_3 & p_4 & p_5 & p_6 & p_7 & \multicolumn{1}{c}{p_8} & n_1 & n_2 & n_3 & n_4 & n_5 & \multicolumn{1}{c}{n_6} \\
\cline{3-16}
\text{$[1,3]$} & p_1 & 1 & 1 & 0 & 0 & 0 & 0 & 0 & 0 & 0 & 0 & 0 & 1 & 0 & 1 \\
\text{$[2,7]$} & p_2 & 1 & 1 & 1 & 1 & 1 & 0 & 0 & 0 & 1 & 1 & 0 & 1 & 0 & 1 \\
\text{$[4,9]$} & p_3 & 0 & 1 & 1 & 1 & 1 & 1 & 0 & 0 & 0 & 1 & 0 & 1 & 0 & 1 \\
\text{$[5,10]$} & p_4 & 0 & 1 & 1 & 1 & 1 & 1 & 0 & 0 & 0 & 1 & 1 & 1 & 0 & 1\\
\text{$[6,12]$} & p_5 & 0 & 1 & 1 & 1 & 1 & 1 & 1 & 0 & 1 & 1 & 1 & 1 & 0 & 1 \\
\text{$[8,13]$} & p_6 & 0 & 0 & 1 & 1 & 1 & 1 & 1 & 0 & 1 & 1 & 1 & 1 & 0 & 1 \\
\text{$[11,15]$} & p_7 & 0 & 0 & 0 & 0 & 1 & 1 & 1 & 1 & 0 & 0 & 1 & 0 & 0 & 1 \\
\text{$[14,16]$} & p_8 & 0 & 0 & 0 & 0 & 0 & 0 & 1 & 1 & 0 & 0 & 1 & 0 & 0 & 1 \\
\cline{3-16}
\end{array}$$}
\caption{A proper tagged probe interval representation of the graph $G$ in Example \ref{ex2}}\label{tnew}
\end{table}

Let us illustrate the above theorem by the following example.

\begin{exmp}\label{ex2}
Consider the graph $G=(V,E)$ with an independent set $N=\set{n_1,n_2,\ldots ,n_6}$ and $P=V \smallsetminus N =\set{p_1,p_2,\ldots ,p_8}$, where the matrices $A(P)$ and $A(P,N)$ are given in Table \ref{tap}. First note that $A(P)$ satisfies consecutive $1$'s property. So $G_P$ is a proper interval graph. Secondly, each column of $A(P,N)$ does not have more than two consecutive stretches of $1$'s. Now the canonical sequence $S=\s_{G_P}=(1\ 2\ 1\ 3\ 4\ 5\ 2\ 6\ 3\ 4\ 7\ 5\ 6\ 8\ 7\ 8)$. The required substrings of probe neighbors for nonprobe vertices $n_1,n_2,\ldots ,n_6$ are $(5\ 2\ 6)$, $(3\ 4\ 5\ 2\ 6\ 3\ 4)$, $(4\ 7\ 5\ 6\ 8\ 7\ 8)$, $(1\ 2\ 1\ 3\ 4\ 5\ 2\ 6\ 3\ 4)$, $\emptyset$, $S$ respectively. Note that $G$ is indeed a PTPIG with an interval representation shown in Table \ref{tnew} which is constructed by the method described in the sufficiency part of Theorem \ref{main}.

\vspace{1em}\noindent
Now consider the graph $G$ in Example \ref{ex1} which is not a PTPIG. From Table \ref{ex1tab} we compute $S=S(G_P)=(1\ 2\ 1\ 3\ 4\ 2\ 3\ 5\ 4\ 6\ 5\ 6)$. The graph $G_P$ is connected and reduced. So $S$ is unique up to reversal. Note that the nonprobe vertex $n_1$ is adjacent to probe vertices $\set{p_2,p_4,p_5}$ and there is no substring in $S$ containing only $\set{2,4,5}$.
\end{exmp}

\begin{defn}
Let $G=(V,E)$ be a graph with an independent set $N$ and $P=V \smallsetminus N$ such that $G_P$, the subgraph induced by $P$ is a proper interval graph. Let $\s_{G_P}$ be a canonical sequence of $G_P$.  Let $w\in N$. If there exists a substring in $\s_{G_P}$ which contains all the neighbors of $w$ and all the vertices in the substring are neighbors of $w$ then we call the substring a {\em perfect substring of $w$ in $G$}. If the canonical sequence $\s_{G_P}$ contains a {\em perfect substring of $w$ in $\s_{G_P}$} for all $w\in N$, we call it a {\em perfect canonical sequence for $G$}.
\end{defn}

\begin{prop}\label{maincor}
Let $G=(P, N, E)$ be a PTPIG such that $G_P$ is a connected reduced proper interval graph and $\s_{G_P}$ be a canonical sequence of $G_P$. Then for any nonprobe vertex $w\in N$, there cannot exist more than one disjoint perfect substring of $w$ in $\s_{G_P}$, unless the substring consists of a single element. 
\end{prop}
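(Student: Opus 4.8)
The plan is to argue by contradiction: assume that $w$ admits two disjoint perfect substrings and show that this forces $w$ to have a single neighbour, i.e.\ that the substring consists of one element. Write $k$ for the number of neighbours of $w$ (all of which lie in $P$, since $N$ is independent), and let $S_1, S_2$ be two disjoint perfect substrings of $w$ with $S_1$ occurring before $S_2$ in $\s_{G_P}$. I will assume $k\geq 2$ and derive a contradiction.

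First I would pin down the combinatorial shape of $S_1$ and $S_2$. Each index occurs exactly twice in $\s_{G_P}$, and a perfect substring must contain every neighbour of $w$ at least once; since $S_1$ and $S_2$ are disjoint, together they contain each neighbour at least twice, so in fact each neighbour occurs \emph{exactly once} in each of $S_1$ and $S_2$. Because $S_1$ precedes $S_2$ and the first occurrence of any index (its left endpoint $\ell_{u_i}$ in the canonical representation) precedes its second occurrence (the right endpoint $r_{u_i}$), the occurrence sitting in $S_1$ is always the left endpoint and the one in $S_2$ the right endpoint. Hence $S_1$ is exactly the contiguous block of the left endpoints of the neighbours of $w$, with nothing else, and $S_2$ is exactly the contiguous block of their right endpoints. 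Letting $i_1$ be the smallest and $i_k$ the largest index among the neighbours, $S_1$ reads $\ell_{u_{i_1}},\ldots,\ell_{u_{i_k}}$ and $S_2$ reads $r_{u_{i_1}},\ldots,r_{u_{i_k}}$ in position order.

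The key observation is the absence of interleaving. Since $S_1$ consists entirely of left endpoints, no right endpoint lies strictly between $\ell_{u_{i_1}}$ and $\ell_{u_{i_k}}$; dually, since $S_2$ consists entirely of right endpoints, no left endpoint lies strictly between $r_{u_{i_1}}$ and $r_{u_{i_k}}$. Now recall that in the canonical ordering the closed neighbourhood of $u_i$ is an index interval $N[u_i]=\set{u_{L(i)},\ldots,u_{R(i)}}$ (the closed neighbourhood condition of Theorem~\ref{troberts}), where $L(i)=\min\Set{j}{r_{u_j}>\ell_{u_i}}$ and $R(i)=\max\Set{j}{\ell_{u_j}<r_{u_i}}$. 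The first no-interleaving fact shows that a right endpoint exceeds $\ell_{u_{i_1}}$ if and only if it exceeds $\ell_{u_{i_k}}$, so $\Set{j}{r_{u_j}>\ell_{u_{i_1}}}=\Set{j}{r_{u_j}>\ell_{u_{i_k}}}$ and therefore $L(i_1)=L(i_k)$; symmetrically the second fact yields $R(i_1)=R(i_k)$.

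Consequently $N[u_{i_1}]=N[u_{i_k}]$. As $k\geq 2$ we have $i_1\neq i_k$, so $u_{i_1}$ and $u_{i_k}$ are distinct vertices with identical closed neighbourhoods, contradicting the hypothesis that $G_P$ is reduced. Hence $k=1$, i.e.\ the perfect substring of $w$ consists of a single vertex, which is the claimed exception. The main obstacle, and the heart of the argument, is the rigidity statement of the second step, namely the identification of $S_1$ and $S_2$ as pure uninterleaved blocks of left, respectively right, endpoints; once this is in hand, the endpoint formulas for $L$ and $R$ together with the monotonicity guaranteed by Theorem~\ref{troberts} deliver the twin-vertex contradiction immediately. (Connectedness of $G_P$ is used only to guarantee, via the earlier results, that $\s_{G_P}$ is well defined up to reversal, so the statement is independent of which canonical sequence is chosen.)
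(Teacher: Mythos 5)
Your proof is correct and follows essentially the same route as the paper's: both identify the first perfect substring as the uninterrupted run of left endpoints of $w$'s neighbours and the second as the run of their right endpoints, and both turn the resulting ``no interleaving'' into the conclusion that distinct neighbours of $w$ share a closed neighbourhood, contradicting reducedness. The only cosmetic difference is that you compare just the extreme vertices $u_{i_1}$ and $u_{i_k}$ via the endpoint formulas for $N[\cdot]$, whereas the paper shows all of $u_k,\ldots,u_{k+r}$ have identical closed neighbourhoods; the substance is the same.
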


\begin{proof}
Let $u_1,u_2,\ldots,u_p$ be the canonical ordering of the probe vertices of $G$ with the proper interval representation $\Set{[\ell_i,r_i]}{i=1,2,\ldots,p}$ that satisfies the condition~\ref{order} of Theorem~\ref{troberts} and $S$ be the corresponding canonical sequence $\s_{G_P}$. We first note that, since each vertex in $S$ appears twice, there cannot be more than two disjoint perfect substrings of $S$.

\vspace{1em}\noindent
Now suppose there is a nonprobe vertex, say, $w$ in $G$ such that there are two disjoint perfect substrings of length greater than $1$. We will refer to these substrings as the first substring and second substring corresponding to the relative location of the substrings in $S$. Now in $S$, each number $i$ appears twice due to $l_i$ and $r_i$ only. Thus if we think of the canonical sequence as an ordering of $\ell_i$'s and $r_i$'s, then we have that the first substring contains all the $\ell_i$'s and the second substring contains all the $r_i$'s for all the probe vertices $u_i$ those are neighbors of $w$, as $\ell_i<r_i$ and both substrings contain all numbers $i$ such that $u_i$ is a neighbor of $w$. 

\vspace{1em}\noindent
Moreover due to the increasing order of $\ell_i$'s and $r_i$'s, both the substrings contain numbers $k,k+1,\ldots,k+r$ for some integers $k,r$ with $1\leqslant k\leqslant m$ and $1\leqslant r\leqslant m-k$. So the first substring must comprise of some consecutive collection of $\ell_i$ and similarly for the second substring, i.e., the first substring is $\ell_k, \ell_{k+1}, \dots, \ell_{k+r}$ and the second substring is $r_k, r_{k+1}, \dots, r_{k+r}$ (in $\is_{G_P}$). Therefore the vertices $u_k, \dots, u_{k+r}$ form a clique. 

\vspace{1em}\noindent
Now suppose $u_i$ is adjacent to $u_{k+t}$ for some $i<k$ and $1\leq t\leq r$. Then $\ell_i<\ell_k$ and $\ell_{k+r}<r_i$ as $\ell_k$ to $\ell_{k+r}$ are consecutive in the first substring (in $\is_{G_P}$). But this implies $u_i$ is adjacent to all $u_k,u_{k+1},\ldots,u_{k+r}$. Similarly, one can show that if $u_j$ is adjacent to $u_{k+t}$ for some $j>k+r$ and $1\leq t\leq r$. Then $u_j$ is adjacent to all $u_k,u_{k+1},\ldots,u_{k+r}$. Thus (closed) neighbors of $u_k,u_{k+1},\ldots,u_{k+r}$ are same in $G_P$ which contradicts the assumption that $G_P$ is reduced as $r\geq 1$.
\end{proof}

\noindent
In fact, we can go one step more in understanding the structure of a PTPIG. If $G$ is a PTPIG, not only there cannot be two disjoint perfect substrings (of length more than $1$) for any nonprobe vertex in any canonical sequence but also any two perfect substrings for the same vertex must intersect at at least two places, except two trivial cases.  

\begin{lem}\label{lemtcp}
Let $G=(P,N,E)$ be a PTPIG such that $G_P$ is a connected reduced proper interval graph with a canonical ordering of vertices $\set{u_1,u_2,\ldots,u_p}$ and let $\vs_{G_P}$ be the corresponding vertex canonical sequence of $G_P$. Let $w\in N$ be such that $w$ has at least two neighbors in $P$ and $T_1$ and $T_2$ be two perfect substrings for $w$ in $\vs_{G_P}$ intersecting in exactly one place. Then one of the following holds:
\begin{enumerate}
\item $\vs_{G_P}$ begins with $u_1u_2u_1$ and only $u_1$ and $u_2$ are neighbors of $w$.
\item $\vs_{G_P}$ ends with $u_pu_{p-1}u_p$ and only $u_{p-1}$ and $u_p$ are neighbors of $w$.
\end{enumerate}
\end{lem}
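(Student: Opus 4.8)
The plan is to argue inside a fixed PTPIG representation and to read off everything from the positions of the interval endpoints. I would fix a canonical ordering $u_1,\dots,u_p$ with proper representation $\{[\ell_i,r_i]\}$ satisfying condition~\ref{order} of Theorem~\ref{troberts}, so that $\vs_{G_P}$ lists the $2p$ endpoints in increasing order, the first occurrence of $u_i$ being $\ell_i$ and the second $r_i$; recall $\ell_1<\cdots<\ell_p$ and $r_1<\cdots<r_p$. Since $T_1$ and $T_2$ meet in exactly one position, after labelling so that $T_1$ lies on the left one may write $T_1=[s_1,p^*]$ and $T_2=[p^*,e_2]$, the single shared position $p^*$ carrying one endpoint $x^*$ of some probe $u_m$. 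Because $w$ has $k\geq 2$ neighbours and each perfect substring must contain every neighbour, each neighbour $u_i$ with $i\neq m$ cannot have both of its occurrences on one side of $p^*$ (else it would be absent from the other substring); hence $\ell_i$ lies strictly left of $p^*$ and $r_i$ strictly right of it. In particular every neighbour interval contains the value $x^*$, so the neighbours of $w$ form a clique, and $T_1\cup T_2=[s_1,e_2]$ is again a perfect substring occupying exactly $2k-1$ positions: all $2k$ neighbour endpoints except the other endpoint of $u_m$, which is forced outside the block.

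The crucial consequence is that no endpoint of a non-neighbour of $w$ lies in the value range spanned by $[s_1,e_2]$, and I would use this to show that the neighbour indices are consecutive. Writing the neighbours as $u_{a_1},\dots,u_{a_k}$ with $a_1<\cdots<a_k$, if some index $b$ satisfied $a_t<b<a_{t+1}$ then one of $\ell_b,r_b$ would fall strictly inside the range of the block while $u_b$ is not a neighbour, which is impossible; thus $a_{t+1}=a_t+1$ for all $t$. A short endpoint-counting step pins down which endpoint $x^*$ is: if $x^*=r_m$ then the omitted endpoint $\ell_m$ sits to the left of the block, forcing $m=a_1$ and the block to read $\ell_{a_2},\dots,\ell_{a_k},r_{a_1},\dots,r_{a_k}$; the mirror situation $x^*=\ell_m$ forces $m=a_k$ and the reversed pattern. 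The two cases are interchanged by reversing the canonical sequence, which is legitimate by Lemma~\ref{obsseq} and preserves both perfect substrings and one-place intersections, so it suffices to finish one, say $x^*=r_m$ with $m=a_1$.

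Now reducedness eliminates $k\geq 3$. Using consecutiveness and the absence of foreign endpoints in the block, I would show for each interior neighbour $u_{a_j}$ ($j\geq 2$) that every $u_c$ with $c<a_1$ has $r_c<\ell_{a_2}\leq \ell_{a_j}$ and every $u_c$ with $c>a_k$ has $\ell_c>r_{a_k}\geq r_{a_j}$, so $u_{a_j}$ is adjacent to no vertex outside the clique; hence $N[u_{a_j}]=\{u_{a_1},\dots,u_{a_k}\}$ for all $j\geq 2$. For $k\geq 3$ this gives $N[u_{a_2}]=N[u_{a_3}]$, contradicting that $G_P$ is reduced, so $k=2$. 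Finally connectivity fixes the location: since no clique vertex is adjacent to any $u_c$ with $c>a_2$, the canonical order forbids any edge across the cut $\{u_1,\dots,u_{a_2}\}\mid\{u_{a_2+1},\dots,u_p\}$, so a nonempty right part would disconnect $G_P$; hence $a_2=p$, $a_1=p-1$, and reading off positions $2p-2,2p-1,2p$ shows $\vs_{G_P}$ ends with $u_pu_{p-1}u_p$ with $w$ adjacent to exactly $u_{p-1},u_p$. The reversal then yields the other alternative.

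I expect the main obstacle to be the bookkeeping of the middle two paragraphs: proving rigorously that the one-shared-position structure forces the block to omit exactly one endpoint of $u_m$ and to contain no foreign endpoint, and then converting the statement ``no foreign endpoint in the block's range'' first into consecutiveness of the neighbour indices and then into the exact closed-neighbourhood equalities that drive the reducedness contradiction. Keeping the two orientations of $x^*$ straight, and justifying that reversal legitimately reduces one to the other, is the other delicate point; everything else is a short deduction from Theorem~\ref{troberts}, Lemma~\ref{obsseq}, and the clique/disjointness considerations already used in Proposition~\ref{maincor}.
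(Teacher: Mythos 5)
Your argument is correct and follows essentially the same route as the paper's proof: identify the shared position as a single endpoint of one probe $u_m$, observe that every other neighbour of $w$ must straddle it (so the neighbours form a consecutive clique and $T_1\cup T_2$ contains no foreign vertex), and then use reducedness to kill $k\geq 3$ and properness/connectivity to push the clique to an end of the ordering --- the paper merely applies the connectivity step before the reducedness step rather than after. One small caveat: your intermediate claim that the block has exactly $2k-1$ positions because the other endpoint of $u_m$ is ``forced outside'' does not follow from the one-shared-position structure alone (it ultimately needs reducedness), but nothing downstream depends on it, so the proof stands.
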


\begin{proof}
Let $[a_i,b_i]$ be the interval corresponding to $u_i$ for $i=1,2,\ldots,p$. Let the place where $T_1$ and $T_2$ intersect be the first occurrence of the vertex $u_{k}$.

\vspace{1em}\noindent
Without loss of generality, let the substring $T_1$ end with the first occurrence of $u_k$ and the substring $T_2$ start with the first occurrence of $u_{k}$. Thus for all $i>k$, the vertex $u_{i}$ cannot appear before the first occurrence of $u_{k}$ in the $\vs_{G_P}$. So $T_1$ does not contain any $u_{i}$ such that $i>k$. Thus $w$ is not a neighbor of any $u_{i}$ such that $i> k$.  Note that, it also means that for any vertex in the neighbor of $w$ (except for $u_{k}$) the substring $T_1$ contains the first occurrence, while the substring $T_2$ contains the second occurrence. Thus the vertices in the neighborhood of $w$ has to be consecutive vertices in the canonical ordering of $G_P$. Let the vertices in the neighborhood of $w$ be $u_{k-r}, \dots, u_{k-1}, u_{k}$, where $1\leq r\leq k-1$. 

\vspace{0.5em}\noindent
Now for any vertex $u_i$ such that $i< k-r$, we have $u_i$ is not in $T_1$ and $T_2$. So the first occurrence of $u_i$ is before the first occurrence of $u_{k-r}$ and the second occurrence of $u_i$ is either also before the first occurrence of $u_{k-r}$ or after $T_2$, i.e., after the second occurrence of $u_{k-r}$. But if the second case happens, then we would violate the fact that $G_P$ is proper.  So the only option is that both the first and second occurrence of $u_i$ is before the first occurrence of $u_{k-r}$  and this would violate the condition that the graph $G_P$ is connected. So the only option is there exists no $u_i$ such that $i< k-r$. So $k-r =1$.  Thus we have the neighbors of $w$ precisely $u_1, \dots, u_k$. 

\vspace{0.5em}\noindent
Now if we look at the interval canonical sequence of $G_P$, we have $T_1$ corresponds to $a_1, \dots, a_k$ and $T_2$ corresponds to $a_k, b_1, \dots, b_{k-1}$.  But this would mean that all the vertices $u_{1}, \dots, u_{k-1}$ have the same (closed) neighborhood in $G_P$ which is not possible as we assumed $G_P$ is reduced, unless the set $\{u_{1}, \dots, u_{k-1}\}$ is a single element set. In that case, $w$ has neighbors $u_1$ and $u_2$ and the $T_1$ and $T_2$ correspond to $a_1, a_2$ and $a_2, b_1$ respectively (in $\is_{G_P}$). This is the first option in the Lemma. By similar argument, if we assume that $T_1$ and $T_2$ intersect in the second occurrence of the vertex $u_{k}$, we get the other option. 
\end{proof}

\noindent
Now let us consider what happens when the graph $G_P$ induced by the probe vertices is not necessarily reduced. Let the blocks of $G_P$ be $B_1, \dots, B_t$. Then a canonical ordering of vertices in the reduced graph $\widetilde{G_P}$ can be considered as a {\em canonical ordering of blocks} of $G_P$ and the corresponding canonical sequence $\s_{\widetilde{G_P}}$ can be considered as a {\em canonical sequence of blocks} of $G_P$.  Then a  substring in $\s_{\widetilde{G_P}}$ can also be called as a {\em block substring}. If there is a vertex in a block $B_j$ that is a neighbor of a nonprobe vertex $w$, we call $B_j$ a {\em block-neighbor} of $w$. Also if a block substring contains all the block neighbors of $w$ and all the blocks in it are block-neighbors of $w$, then we call the block substring a {\em perfect block substring of $w$ in $\s_{\widetilde{G_P}}$}. Then the following corollaries are immediate from Proposition~\ref{maincor} and Lemma~\ref{lemtcp}.

\begin{cor}\label{cor:disjoint}
Let $G=(P,N,E)$ be a PTPIG such that $G_P$ is a connected proper interval graph. Let $\s_{\widetilde{G_P}}$ be a canonical sequence for the reduced graph $\widetilde{G_P}$. Then for any $w\in N$, there cannot exist more than one disjoint perfect block substrings of $w$ in $\s_{\widetilde{G_P}}$, unless the block substring consists of a single element.
\end{cor}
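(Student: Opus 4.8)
The plan is to reduce the non-reduced case to the reduced case already settled in Proposition~\ref{maincor}, by manufacturing an auxiliary PTPIG whose probe graph is $\widetilde{G_P}$. Since $G_P$ is connected, $\widetilde{G_P}$ is a connected reduced proper interval graph; let $\tilde v_1,\ldots,\tilde v_t$ be its canonical ordering (unique up to reversal) corresponding to the blocks $B_1,\ldots,B_t$ of $G_P$. I would define a graph $H$ on $\set{\tilde v_1,\ldots,\tilde v_t}\cup N$ by taking $H_P=\widetilde{G_P}$, keeping $N$ independent, and joining $w\in N$ to $\tilde v_j$ exactly when $B_j$ is a block-neighbor of $w$. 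With this definition a perfect block substring of $w$ in $\s_{\widetilde{G_P}}$ is, by the very definitions, the same thing as a perfect substring of $w$ for the graph $H$; so it suffices to prove that $H$ is a PTPIG and then apply Proposition~\ref{maincor} to it.

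First I would show $H$ is a PTPIG. As $H_P=\widetilde{G_P}$ is a proper interval graph, Theorem~\ref{main} reduces this to verifying condition (A) for $H$ relative to the ordering $\tilde v_1,\ldots,\tilde v_t$, i.e.\ that every $w\in N$ has a perfect block substring in $\s_{\widetilde{G_P}}$. Here I would use that $G$ itself is a PTPIG: by the necessity half of Theorem~\ref{main} the canonical sequence $\s_{G_P}$ contains a perfect substring $T$ for $w$ — a contiguous stretch holding each probe neighbor of $w$ at least once and no non-neighbor.

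The heart of the argument is transporting $T$ through the block contraction. Recall (from the corollary following Proposition~\ref{staircanseq}, see also Remark~\ref{nonreduced}) that $\s_{G_P}$ arises from $\s_{\widetilde{G_P}}$ by replacing each block symbol $\tilde v_j$ with the increasing run of the members of $B_j$. Hence the positions of $\s_{G_P}$ split into $2t$ consecutive runs $R_1,\ldots,R_{2t}$, one for the first and one for the second occurrence of each block, and contracting each $R_m$ to its block symbol $\mathrm{sym}(R_m)$ reproduces $\s_{\widetilde{G_P}}$ in order. Since $T$ occupies a contiguous range of positions, it meets a consecutive block of runs $R_a,\ldots,R_c$, so their images $\mathrm{sym}(R_a),\ldots,\mathrm{sym}(R_c)$ form a substring of $\s_{\widetilde{G_P}}$. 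Moreover a block $B_j$ has a member occurring in $T$ if and only if (because $T$ contains all and only neighbors of $w$) some member of $B_j$ is a neighbor of $w$, that is, if and only if $B_j$ is a block-neighbor of $w$; thus this substring contains exactly the block-neighbors of $w$ and is the sought perfect block substring. This establishes condition (A), so $H$ is a PTPIG with $H_P=\widetilde{G_P}$ connected and reduced.

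Finally, applying Proposition~\ref{maincor} to $H$ yields that $w$ admits no two disjoint perfect substrings in $\s_{H_P}=\s_{\widetilde{G_P}}$ unless such a substring is a single element; since these perfect substrings are precisely the perfect block substrings of $w$, the corollary follows. I expect the middle step to be the main obstacle: because $w$ may be adjacent to only part of a block, $T$ need not be a union of complete block-runs, so one cannot simply say ``contract $T$''. What makes contiguity survive is exactly the decomposition of $\s_{G_P}$ into the ordered runs $R_1,\ldots,R_{2t}$ that contract bijectively and order-preservingly onto the symbols of $\s_{\widetilde{G_P}}$, which forces the runs met by an interval of positions to be consecutive.
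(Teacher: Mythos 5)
Your proposal is correct and is essentially the argument the paper intends: the paper declares the corollary ``immediate'' from Proposition~\ref{maincor} by passing to the reduced graph, and your auxiliary graph $H$ with $H_P=\widetilde{G_P}$, together with the run-contraction argument showing that a perfect substring of $w$ in $\s_{G_P}$ collapses to a perfect block substring in $\s_{\widetilde{G_P}}$ (so that $H$ satisfies condition (A) and is a PTPIG), is precisely the detail the paper leaves implicit. No gaps.
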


\begin{cor}\label{cor:1intersection}
Let $G=(P,N,E)$ be a PTPIG such that $G_P$ is a connected proper interval graph with a canonical ordering of blocks $\set{B_1,B_2,\ldots,B_t}$ and let $\vs_{\widetilde{G_P}}$ be the corresponding vertex canonical sequence of blocks of $G_P$. Let $w\in N$ be such that $w$ has at least two block-neighbors in $G_P$ and $T_1$ and $T_2$ be two perfect block substrings for $w$ in $\vs_{\widetilde{G_P}}$ intersecting in exactly one place. Then one of the following holds:
\begin{enumerate}
\item $\vs_{\widetilde{G_P}}$ begins with $B_1B_2B_1$ and only $B_1$ and $B_2$ are block-neighbors of $w$.
\item $\vs_{\widetilde{G_P}}$ ends with $B_tB_{t-1}B_t$ and only $B_{t-1}$ and $B_t$ are block-neighbors of $w$.
\end{enumerate}
\end{cor}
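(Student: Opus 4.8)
The plan is to obtain both corollaries as routine consequences of the corresponding results for reduced proper interval graphs, namely Proposition~\ref{maincor} and Lemma~\ref{lemtcp}, by passing from $G_P$ to its reduced graph $\widetilde{G_P}$. The key conceptual point is that the notions of \emph{block}, \emph{block-neighbor}, \emph{block substring}, and \emph{perfect block substring} defined just above are precisely the images, under the reduction map, of the notions \emph{vertex}, \emph{neighbor}, \emph{substring}, and \emph{perfect substring} used in the reduced setting. So the strategy is to build an auxiliary PTPIG whose probe part is $\widetilde{G_P}$, apply the reduced-case theorems to it, and then translate back.

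First I would observe that if $G=(P,N,E)$ is a PTPIG whose probe subgraph $G_P$ is connected, then $\widetilde{G_P}$ is a connected \emph{reduced} proper interval graph (connectedness and the proper-interval property are inherited by the reduced graph, and $\widetilde{G_P}$ is reduced by construction). Next I would define an auxiliary graph $\widetilde{G}$ on probe set $\widetilde P=\{\,\tilde u_1,\dots,\tilde u_t\,\}$ (one probe per block $B_1,\dots,B_t$) and the same nonprobe set $N$, where a block-probe $\tilde u$ is declared adjacent to $w\in N$ exactly when $B(\tilde u)$ is a block-neighbor of $w$, i.e.\ when some vertex of that block is adjacent to $w$ in $G$. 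The crucial verification is that $\widetilde{G}$ is again a PTPIG: starting from a PTPIG representation of $G$, all vertices of a single block have the same closed neighborhood in $G_P$ and hence can be assigned identical intervals, and the interval assigned to any $w\in N$ then witnesses adjacency to the block-probe $\tilde u$ precisely when it witnessed adjacency to the vertices of $B(\tilde u)$; this yields a PTPIG representation of $\widetilde{G}$ in which $\Set{I_{\tilde u}}{\tilde u\in\widetilde P}$ is a proper interval representation of $\widetilde{G_P}$.

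With $\widetilde{G}$ established as a PTPIG whose probe subgraph $\widetilde{G_P}$ is connected and reduced, the translation is immediate. For Corollary~\ref{cor:disjoint}, a perfect block substring of $w$ in $\s_{\widetilde{G_P}}$ is by definition exactly a perfect substring of $w$ in the canonical sequence $\s_{(\widetilde G)_P}=\s_{\widetilde{G_P}}$ of the auxiliary PTPIG $\widetilde G$, since the block-neighbors of $w$ in $G_P$ are exactly the neighbors of $w$ in $\widetilde G$. Proposition~\ref{maincor} applied to $\widetilde{G}$ then says there cannot be more than one disjoint perfect substring of $w$ unless it is a single element, which is exactly the asserted statement about block substrings. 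For Corollary~\ref{cor:1intersection}, the block-neighbors of $w$ in $G_P$ correspond to neighbors of $w$ in $\widetilde G$, and ``$w$ has at least two block-neighbors'' becomes ``$w$ has at least two neighbors'' in $\widetilde G$; applying Lemma~\ref{lemtcp} to $\widetilde{G}$ with its canonical ordering $\tilde u_1,\dots,\tilde u_t$ (relabelled as the block ordering $B_1,\dots,B_t$) gives the two alternatives $\vs_{\widetilde{G_P}}$ beginning with $B_1B_2B_1$ or ending with $B_tB_{t-1}B_t$, as claimed.

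The main obstacle, and the only step requiring genuine care rather than bookkeeping, is the verification that $\widetilde{G}$ is a bona fide PTPIG, i.e.\ that collapsing each block to a single representative vertex with a common interval still produces a valid PTPIG representation. One must check that this collapse does not accidentally create or destroy a probe--nonprobe edge: this relies on the fact that vertices in the same block of $G_P$ share the same closed neighborhood \emph{within} $P$, but one needs additionally that block-members are treated uniformly by each $w\in N$ in the \emph{given} representation, which is why the argument is phrased in terms of an existing PTPIG representation and the endpoint-containment condition, rather than purely combinatorially. Once this is secured, everything else is definitional unwinding, and one could alternatively bypass the auxiliary-graph construction entirely by invoking Remark~\ref{nonreduced}, which guarantees that $\s_{\widetilde{G_P}}$ is well defined up to reversal and that block substrings behave as substrings of a canonical sequence of the reduced proper interval graph.
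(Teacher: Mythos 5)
Your overall architecture --- pass to the reduced graph $\widetilde{G_P}$, form an auxiliary graph $\widetilde{G}$ whose probes are the blocks, and invoke Lemma~\ref{lemtcp} (and Proposition~\ref{maincor}) there --- is exactly what the paper intends; it offers no further argument, calling the corollaries ``immediate'' from those two results. The gap is in the one step you yourself single out as needing care: the verification that $\widetilde{G}$ is a PTPIG by giving all vertices of a block identical intervals. This fails precisely because a nonprobe $w$ need \emph{not} treat the members of a block uniformly: $w$ can be a partial-block-neighbor, adjacent to some but not all vertices of $B_k$, and this is not a degenerate case --- it is the central phenomenon that Section~\ref{algostep2} of the algorithm is built to handle. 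Concretely, if $B_k=\set{u_s,\ldots,u_{s+m}}$ with intervals $[a_i,b_i]$ and $w$ is adjacent only to $u_{s+1}$ because $a_{s+1}\in I_w=[\ell_w,r_w]$ while $a_s<\ell_w$ and $b_{s+m}>r_w$, then no common interval for the block --- neither any single member's interval nor the hull $[a_s,b_{s+m}]$, both of whose endpoints lie outside $I_w$ --- has an endpoint in $I_w$, so the collapsed representation loses the edge between $\tilde u_k$ and $w$ that your definition of $\widetilde{G}$ demands. The uniformity condition you say ``one needs additionally'' is simply false in general, so the representation-based collapse cannot be repaired by a cleverer choice of common interval.

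The fix is to certify that $\widetilde{G}$ is a PTPIG combinatorially rather than by manipulating intervals. By Theorem~\ref{main}, since $G$ is a PTPIG there is a canonical ordering in which every $w\in N$ has a perfect substring in $\s_{G_P}$; since $\s_{G_P}$ is obtained from $\s_{\widetilde{G_P}}$ by expanding each occurrence of a block into the run of its vertices (Remark~\ref{nonreduced} and the corollary to Proposition~\ref{staircanseq}), the block occurrences touched by that perfect substring form a contiguous stretch of $\s_{\widetilde{G_P}}$ which contains only block-neighbors of $w$ (each touched occurrence contributes at least one neighbor of $w$) and contains every block-neighbor at least once --- that is, a perfect block substring. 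Applying Theorem~\ref{main} in the other direction to the block graph shows $\widetilde{G}$ is a PTPIG, after which your invocation of Lemma~\ref{lemtcp} with $\tilde u_1,\ldots,\tilde u_t$ relabelled as $B_1,\ldots,B_t$ goes through exactly as you describe. Alternatively, you can bypass $\widetilde{G}$ entirely by noting that the proofs of Proposition~\ref{maincor} and Lemma~\ref{lemtcp} use only the hypotheses on $T_1,T_2$ and the connected reduced structure of the probe graph, so they apply verbatim with blocks in place of vertices.
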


\section{Recognition algorithm}\label{reco}

In this section, we present a linear time recognition algorithm for PTPIG. That is, given a graph $G=(V,E)$, and a partition of the vertex set into $N$ and $P = V\smallsetminus N$ we can check, in time $O(|V|+|E|)$, if the graph $G(P,N,E)$ is a PTPIG. Now $G=(P,N,E)$ is a PTPIG if and only if it is a TPIG (i.e., it satisfies the three conditions in Definition~\ref{defn1}) and $G_P$ is a proper interval graph for a TPIG representation of $G$. Note that it is easy to check in linear time if the graph $G$ satisfies the first condition, namely, if $N$ is an independent set in the graph. Now for testing if the graph satisfies the other two properties we will use the characterization we obtained in Theorem~\ref{main}. 

\vspace{0.5em}\noindent
We will use the recognition algorithm for proper interval graph $H=(V^\prime,E^\prime)$ given by Booth and Lueker \cite{BL76} as a black box that runs in $O(|V^\prime|+|E^\prime|)$. The main idea of their algorithm is that $H$ is a proper interval graph if and only if the adjacency matrix of the graph satisfies the consecutive $1$'s property. In other words, $H$ is a proper interval graph if and only if there is an ordering of the vertices of $H$ such that for any vertices $v$ in $H$,  the neighbors of $v$ must be consecutive in that ordering. So for every vertex $v$ in $H$ they consider restrictions, on the ordering of the vertices, of the form ``all vertices in the neighborhood of $v$ must be consecutive". This is done using the data structure of $PQ$-trees.
The PQ-tree helps to store all the possible orderings that respect all these kind of restrictions. It is important to note that all the orderings that satisfy all the restrictions are precisely all the canonical orderings of vertices of $H$. 

\vspace{0.5em}\noindent
The main idea of our recognition algorithm is that if the graph $G=(P,N,E)$ is PTPIG then, from Condition \textbf{(A)} in Theorem~\ref{main}, we can obtain a series of restrictions on the ordering of vertices that also can be ``stored" using the PQ-tree data structure. These restrictions are on and above the restrictions that we need to ensure the graph $G_P$ is a proper interval graph. If finally there exists any ordering of the vertices that satisfy all the restrictions, then that ordering will be a canonical ordering that satisfies the condition  \textbf{(A)} in Theorem~\ref{main}.  So the main challenge is to identify all the extra restrictions on the ordering and how to store them in the PQ-tree. 

\vspace{0.5em}\noindent
Once we have verified that the graph $G=(P,N,E)$ satisfies the first condition in Definition~\ref{ptpigdef} and we have stored all the possible canonical ordering of the vertices of the subgraph $G_P=(P,E_1)$ of $G$ induced by $P$ in a PQ-tree (in $O(|P|+|E_1|)$ time), we proceed to find the extra restrictions those are necessary to be applied on the orderings. We present our algorithm for three cases - each case handling a class of graphs that is a generalization of the class of graphs handled in the previous one. 

\begin{itemize}
\item[CASE \ \ I:] First we consider the case when $G_P$ is a connected reduced proper interval graph.
\item[CASE \ II:] Next we consider the case when $G_P$ is a connected proper interval graph, but not necessarily reduced.
\item[CASE III:] Finally we consider the general case when the graph $G_P$ is a proper interval graph, but may not be connected or reduced.
\end{itemize}

\noindent
For all the cases we will assume that the vertices in $P$ are $v_1, \dots, v_p$, the vertices in $N$ are $w_1, \dots, w_q$ and $A_j$ be the adjacency list of the vertex $w_j$ and let $d_j$ be the degree of the vertex $w_j$. So the neighbors of $w_j$ are $A_j[1],A_j[2], \ldots, A_j[d_j]$ for $j=1,2,\ldots,q$.

\subsection{CASE I: The graph $G_P$ is a connected reduced proper interval graph} 

By Lemma~\ref{obsseq}, there is a unique (up to reversal) canonical ordering of the vertices of $G_P$. By Theorem~\ref{main}, we know that the graph $G$ is PTPIG if and only if the following condition is satisfied: 

\vspace{0.5em}\noindent
\textbf{Condition (A1):} For all $1\leq j \leq q$, there is a substring in $\s_{G_P}$ where only the neighbors of $w_j$ appear and  all the neighbors of $w_j$ appear at least once.

\vspace{1em}\noindent
\begin{algorithm}[H]\label{algonew11}
\SetKwData{Left}{left}\SetKwData{This}{this}\SetKwData{Up}{up}
\SetKwFunction{Union}{Union}\SetKwFunction{FindCompress}{FindCompress}
\SetKwInOut{Input}{Input}\SetKwInOut{Output}{Output}
\footnotesize{
\Input{An array $A[1, T]$ containing only $0/1$ entries, $d$ (Assume $A[T+1] = 0$).}
\Output{Output all $(s, t)$ pairs such that $t-s \geq d$ and for all $s\leq i\leq t$, $A[i] =1$, \\
 (either $s=1$ or $A[s-1] = 0$) and (either $t=T$ or $A[t+1]=0$).}
%\BlankLine
Initialize $\ell = r = 0$
\While{$r\leq T$}{
\If{$A[r+1]= 1$}{$r = r+1$}
\If{$A[r+1] = 0$}{
\If{$r-\ell \geq d$}{Output $(\ell+1, r)$}
$\ell = r+1$\\
$r= r+1$
}
}
}
\caption{Finding-1Subsequence}
\end{algorithm}
\begin{algorithm}[H]\label{algonew12}
\SetKwData{Left}{left}\SetKwData{This}{this}\SetKwData{Up}{up}
\SetKwFunction{Union}{Union}\SetKwFunction{FindCompress}{FindCompress}
\SetKwInOut{Input}{Input}\SetKwInOut{Output}{Output}
\footnotesize{
\Input{Given a graph $G=(V, E)$} 
\Output{Accept if the \textbf{Condition (A1)} is satisfied}
%\BlankLine
\For{$j\leftarrow 1$ \KwTo $q$}{
Initialize two arrays $X$ and $Y$ of length $4d_j +1$ by setting all the entries to $0$.\\
Let $\ell = L(A_j[1])$ and $r=R(A_j[1])$. \\
\For{each $k\leftarrow 1$ \KwTo $d_j$}{\label{forins}
Let $s = L(A_j[k])$ and $t = R(A_j[k])$\\
If ($\ell - 2d_j \leq s\leq \ell+2d_j$) \textbf{Mark}  $X[s-\ell + 2d_j +1] = 1$\\
If ($\ell - 2d_j \leq t\leq \ell+2d_j$) \textbf{Mark}  $X[t-\ell + 2d_j +1] = 1$\\
If ($r - 2d_j \leq s\leq r+2d_j$) \textbf{Mark}  $Y[s-r + 2d_j +1] = 1$\\
If ($r - 2d_j \leq t\leq r+2d_j$) \textbf{Mark}  $Y[t-r + 2d_j +1] = 1$\\
}
\For{all $(a, b)$ output of \textbf{Finding-1Subsequence($X,d_j$)}}{
Let $\hat{a} = a + \ell - 2d_j -1$ and $\hat{b} = b + \ell - 2d_j -1$.\\
\If{$\forall 1\leq k \leq d_j$ either $\hat{a}\leq L(A_j[k]) \leq\hat{b}$ or  $\hat{a}\leq R(A_j[k]) \leq\hat{b}$}{$j = j+1$\\ \textbf{Go To} Step 1}
}
\For{all $(a, b)$ output of \textbf{Finding-1Subsequence($Y,d_j$)}}{
Let $\hat{a} = a + r - 2d_j -1$ and $\hat{b} = b + r - 2d_j -1$.\\
\If{$\forall 1\leq k \leq d_j$ either $\hat{a}\leq L(A_j[k]) \leq\hat{b}$ or  $\hat{a}\leq R(A_j[k]) \leq\hat{b}$}{$j = j+1$ \\  \textbf{Go To} Step 1}
}
\textbf{REJECT}\\
}
\textbf{ACCEPT}
}
\caption{Testing Condition (A1)}
\end{algorithm}

\noindent
In this case, when the graph $G_P$ is connected reduced proper interval graph, since there is a unique canonical ordering of the vertices, all we have to do is to check if the corresponding canonical sequence satisfies the Condition (A1). So the rest of the algorithm in this case is to check if the property is satisfied. 

\vspace{0.5em}\noindent
{\bf Idea of the algorithm:} 
Since we know the canonical sequence $\s_{G_P}$ (or obtain by using known algorithms described before in $O(|P|+|E_1|)$ time, where $E_1$ is the set of edges between probe vertices), we can have two look up tables $L$ and $R$ such that for any vertex $v_i\in P$, the $L(v_i)$ and $R(v_i)$ has the index of the first and the second appearance of $v_i$ in $\s_{G_P}$ respectively. We can obtain the look up tables in time $O(|P|)$ steps. 

\vspace{0.5em}\noindent
Also by $\s_{G_P}[k_1, k_2]$ (where $1\leq k_1\leq k_2 \leq 2p$) we will denote the substring of the canonical sequence sequence $\s_{G_P}$ that start at the $k_1^\text{th}$ position and ends at the $k_2^\text{th}$ position in $\s_{G_P}$. 

\vspace{0.5em}\noindent
To check the Condition (A1), we will go over all the $w_j \in N$. For $j\in\{1,2,\ldots,q\}$, let $L(A_j[1]) = \ell_j$ and $R(A_j[1]) = r_j$. Now since all the neighbors of $w_j$ have to be in a substring, there must be a substring of length at least $d_j$ and at most $2d_j$ (as each number appears twice) in $\s_{G_P}[\ell_j-2d_j, \ell_j+2d_j]$ or $\s_{G_P}[r_j-2d_j, r_j+2d_j]$ which contains only and all the neighbors of $w_j$. We can identify all such possible substrings by first marking the positions in $\s_{G_P}[\ell_j-2d_j, \ell_j+2d_j]$ and $\s_{G_P}[r_j-2d_j, r_j+2d_j]$ those are neighbors of $w_j$ and then by doing a double pass (Algorithm~\ref{algonew11}), we find all the possible substrings of length greater than or equal to $d_j$ in $\s_{G_P}[\ell_j-2d_j, \ell_j+2d_j]$ and $\s_{G_P}[r_j-2d_j, r_j + 2d_j]$  that contains only neighbors of $w_j$. 

\vspace{0.5em}\noindent
We prove the correctness and run-time of the algorithm in the following theorem: 

\begin{thm}
Let $G=(V,E)$ be a graph with an independent set $N$ and $P=V\smallsetminus N$ such that $G_P$ is a connected reduced proper interval graph. Then Algorithm \ref{algonew12} correctly decides whether $G$ is a PTPIG with probes $P$ and nonprobes $N$ in time $O(|P|+|N|+|E_2|)$, where $E_2$ is the set of edges between probes $P$ and nonprobes $N$.
\end{thm}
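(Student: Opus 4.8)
The plan is to reduce the statement to Condition (A1) and then argue correctness and running time of the windowed search separately. First I would invoke the structural facts already in hand. By Theorem~\ref{main}, $G$ is a PTPIG iff some canonical ordering of $P$ admits, for every $w_j\in N$, a perfect substring in the associated canonical sequence. Since $G_P$ is connected and reduced, Lemma~\ref{obsseq} makes the canonical ordering, and hence $\s_{G_P}$, unique up to reversal, and by Remark~\ref{remuniq} Condition (A) holds for this sequence iff it holds for its reversal. Thus deciding PTPIG is exactly deciding Condition (A1) in the single sequence $\s_{G_P}$, i.e. determining, for each $j$, whether a perfect substring of $w_j$ exists; it therefore suffices to prove that the algorithm decides this existence correctly for each fixed $w_j$.

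The core of the correctness argument is a localization claim: every perfect substring $T$ of $w_j$ (i) contains an occurrence of the vertex $A_j[1]$, since $T$ must contain all neighbours of $w_j$, and (ii) has length at most $2d_j$, since every position of $T$ carries a neighbour of $w_j$ and each of the $d_j$ neighbours contributes at most two positions to $\s_{G_P}$. As the two occurrences of $A_j[1]$ sit at positions $\ell_j=L(A_j[1])$ and $r_j=R(A_j[1])$, property (i) forces $T$ to meet $\{\ell_j,r_j\}$, and then (ii) forces $T\subseteq\s_{G_P}[\ell_j-2d_j,\ell_j+2d_j]$ or $T\subseteq\s_{G_P}[r_j-2d_j,r_j+2d_j]$. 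Hence no perfect substring escapes the two scanned windows, which is what makes the restricted search sound; conversely, anything the double pass returns is by construction a contiguous block whose entries are all neighbours of $w_j$ and in which all $d_j$ neighbours appear, i.e. a genuine perfect substring. I would also note that examining only the maximal runs of neighbour-positions inside each window suffices: any perfect substring is contained in such a maximal run, which then also contains every neighbour and is itself perfect, so testing each maximal run for the all-neighbours condition (via a running count of distinct neighbours encountered) neither misses nor fabricates a solution. The isolated case $d_j=0$ is handled separately and trivially.

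For the running time I would charge the work per nonprobe vertex. Using that each neighbour of $w_j$ occurs exactly twice in $\s_{G_P}$, marking the neighbours of $w_j$ in a global Boolean array, scanning the two windows of size $O(d_j)$, counting distinct neighbours inside the runs, and clearing the marks each cost $O(d_j)$; summing over $j$ and using $\sum_{j=1}^{q}d_j=|E_2|$ gives $O(|E_2|)$ for the windowed search. Adding the $O(|P|)$ for building the look-up tables $L,R$ and initialising the marking array, and the $O(|N|)$ overhead of looping over all nonprobes (including isolated ones), yields the claimed $O(|P|+|N|+|E_2|)$ bound for Algorithm~\ref{algonew12}, with the computation of $\s_{G_P}$ itself charged separately to the $O(|P|+|E_1|)$ black-box of Booth and Lueker.

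The step I expect to be the main obstacle is the localization claim, specifically that windows of radius $2d_j$ around the two occurrences of the single chosen neighbour $A_j[1]$ capture \emph{every} perfect substring; this requires the simultaneous use of the length bound $|T|\le 2d_j$ and the anchoring of $T$ at $A_j[1]$, and it is precisely what lets the per-vertex cost be $O(d_j)$ rather than $O(p)$. The secondary delicate point is verifying that the double pass (Algorithm~\ref{algonew11}) distinguishes a genuine all-neighbours block from a long neighbour-run that merely repeats a strict subset of the neighbours, which I would settle by the distinct-neighbour counter described above.
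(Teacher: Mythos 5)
Your proposal is correct and follows essentially the same route as the paper: reduce to Condition (A1) via Theorem~\ref{main}, Lemma~\ref{obsseq} and Remark~\ref{remuniq}, localize every perfect substring to the two windows of radius $2d_j$ around the occurrences of $A_j[1]$ using the length bound $|T|\le 2d_j$, extract maximal neighbour-runs by a double pass, verify the all-neighbours condition, and sum $O(d_j)$ over $j$ to get $O(|P|+|N|+|E_2|)$. Your write-up is in fact slightly more explicit than the paper's on the anchoring argument and on why testing only maximal runs suffices, but the decomposition and key ideas are identical.
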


\begin{proof}
For each $j=1,2,\ldots,q$, we test the Condition (A1). In Line 3, $\ell$ and $r$ are the first and second appearance of the first probe neighbor of $w_j$ respectively. In Lines 4--9, we generate two $(0,1)$ arrays $X$ and $Y$, each of length $4d_j+1$. The probe neighbors of $w_j$ lying between $[\ell-2d_j, \ell+2d_j]$ are marked $1$ in $X$ with positions translated by $2d_j+1-\ell$. Similarly $Y$ is formed by marking $1$ for the probe neighbors of $w_j$ lying between $[r-2d_j, r+2d_j]$ and translated by $2d_j+1-r$. Translations are required to avoid occurrence of a negative position. Then using Algorithm \ref{algonew11} we find all substrings of probe neighbors of length greater than or equal to $d_j$ in Lines 10 and 15. Finally, in Lines 12--13 and 17--18, we check that whether the substring contains all probe neighbors of $w_j$. Thus by Theorem~\ref{main} and Remark~\ref{remuniq}, Algorithm \ref{algonew12} correctly decides whether $G$ is a PTPIG with probes $P$ and nonprobes $N$ by testing that whether the Condition A1 is satisfied.

\vspace{1em}\noindent
Now we compute the running time. For each $j=1,2,\ldots,q$, Algorithm \ref{algonew11} requires $O(d_j)$ time. Also Lines 4--9, 12 and 17 of Algorithm \ref{algonew12} require $O(d_j)$ time. Other steps require constant times. Thus Algorithms \ref{algonew11} and \ref{algonew12} run in $O(|E_2| + |N|)$ times, where $E_2$ is the set of edges between probes $P$ and nonprobes $N$ and the look up tables $L$ and $R$ were obtained in $O(|P|)$ time. Thus the total running time is $O(|P|+|N|+|E_2|)$.
\end{proof}

\begin{rem}
{\small Since given a graph $G=(V,E)$ with an independent set $N$ and $P=V\smallsetminus N$, the checking whether $G_P=(P,E_1)$ is a proper interval graph and obtaining $\s_{G_P}$ (from any proper interval representation of $G_P$) requires $O(|P|+|E_1|)$ time, the total recognition time is $O(|P|+|N|+|E_1|+|E_2|)=O(|V|+|E|)$.}
\end{rem}

\subsection{CASE II: The graph $G_P$ is a connected (not necessarily reduced) proper interval graph} \label{algostep2}

In this case, that is when the graph $G_P$ is not reduced, we cannot say that there exists a unique canonical ordering of the vertices of $G_P$. So by Theorem~\ref{main}, all we can say is that among the set of canonical orderings of the vertices of $G_P$, is there an ordering such that the corresponding canonical sequence satisfies Condition (A) of Theorem~\ref{main}? As mentioned before we will assume that we have all the possible canonical ordering of the vertices of $G_P$ stored in a PQ-tree. Now we will impose more constraints on the orderings so that the required condition is satisfied. 

\vspace{1em}\noindent
Let $\widetilde{G_P}$ be the reduced graph of $G_P$. By Remark~\ref{nonreduced}, $\widetilde{G_P}$ has a unique (up to reversal) canonical ordering of vertices, say, $b_1, \dots, b_t$ (corresponding to the blocks $B_1, \dots, B_t$ of the vertices of $G_P$) and the canonical orderings of the vertices of $G_P$ are obtained by all possible permutations of the vertices of $G$ within each block. 

\vspace{1em}\noindent
Since we know the canonical sequence $\s_{\widetilde{G_P}}$ for the vertices in $\widetilde{G_P}$, we can have two look up tables $L$ and $R$ such that for any vertex $b_k\in \widetilde{G_P}$, $L(b_k)$ and $R(b_k)$ have the indices of the first and the second appearance of $b_k$ in the $\s_{\widetilde{G_P}}$
respectively. Abusing notation, for any probe vertex $v\in B_k$, by $L(v)$ and $R(v)$ we will denote $L(b_k)$  and $R(b_k)$ respectively. 
We can obtain the look up tables in time $O(|P|)$ steps. 

\begin{defn}\label{def:012}
For any $w\in N$ and any block $B_k$, we say that $B_k$ is a {\em block-neighbor} of $w$ if there exists at least one vertex in $B_k$ that is a neighbor of $w$. If all the vertices in $B_k$ are neighbors of $w$, we call $B_k$ a {\em full-block-neighbor} of $w$ and if there exists at least one vertex in $B_k$ that is a neighbor of $w$ and at least one vertex of $B_k$ that is not a neighbor of $w$ we call it a {\em partial-block-neighbor} of $w$. Also for any $w\in N$ let us define the function $f_{w}: \{1,2, \ldots, t\} \to \{0,1,2\}$ as 
$$ f_{w}(k) = \left\{ \begin{array}{ll}
         0, & \mbox{if $B_k$ is not a block-neighbor of $w$};\\
        1, & \mbox{if $B_k$ is a full-block-neighbor of $w$};\\
        2, & \mbox{if $B_k$ is a partial-block-neighbor of $w$}.\end{array} \right.$$
By abuse of notation, for any probe vertex $v\in B_k$ by $f_{w}(v)$ we will denote $f_w(k)$. 
\end{defn}

\noindent
Note that for any $w_j\in N$ one can compute the function $f_{w_j}$ in $O(d_j)$ number of steps and can store the function $f_{w_j}$ in an array of size $t$. 
 
\subsubsection{Idea of the Algorithm}\label{idea2}
If $G$ is PTPIG then from Condition (A) in Theorem~\ref{main} we can see that the following condition is a necessary (though not a sufficient) condition: 

\vspace{1em}\noindent
\textbf{Condition B1:} For all $1\leq j \leq q$, there is a substring of $\s_{\widetilde{G_P}}$ where only the block-neighbors of $w_j$ appear and all the block-neighbors of $w_j$ appear at least once and any block that is not at the beginning or end of the substring must be a full-block-neighbor of $w_j$.

\vspace{1em}\noindent
Though the above Condition B1 is not a sufficient condition, but as a first step we will check if the graph satisfies the Condition B1. For every $w_j\in N$, we will identify (using algorithm \textbf{CheckConditionB1($w_j$)} (see Algorithm \ref{algonew2})) all possible maximal substrings of $\s_{G_P}$  that can satisfy Condition B1. If such a substring exists, then CheckConditionB1($w_j$) outputs the block numbers that appear at the beginning and end of the substring. Let for some $w_j$, CheckConditionB1($w_j$) outputs $(k_1, k_2)$, then note that $1\leq k_1\leq k_2 \leq t$ and $k_1$ and $k_2$ are the only possible partial neighbors of $w_j$.  The algorithm CheckConditionB1($w_j$) is very similar to the algorithm \textbf{Testing Condition (A1)} that we described for the CASE I, when we assumed $G_P$ is a connected reduced proper interval graph.

\vspace{1em}\noindent
Now as we mentioned earlier the Condition (B1) is not sufficient for $G$ to be a PTPIG. For $G$ to be a PTPIG (that is, to satisfy Condition (A) of Theorem~\ref{main}) we will have to find a suitable canonical ordering of the vertices or in other words, by Remark~\ref{nonreduced}, we need to find suitable ordering of vertices in each block. Depending on $(k_1, k_2)$ which is the output by CheckConditionB1($w_j$), we have a number of cases and for each of the cases some restrictions will be imposed on the ordering of the vertices within blocks. 

\vspace{1em}\noindent
Let $\sigma_1, \dots, \sigma_t$ be the ordering of the vertices of blocks $B_1, B_2, \dots, B_t$ respectively. For every $w_j\in N$, let us denote by $Ngb_k(w_j)$, the vertices in the block $B_k$ those are neighbors of $w_j$. We list the different cases and restriction on $\sigma_1, \dots, \sigma_t$ that would be imposed in each of the cases when the algorithm \textbf{CheckConditionB1($w_j$)} outputs $(k_1, k_2)$.

\noindent
\begin{itemize}
\item \textbf{Category 0: There are no partial-block-neighbors of $w_j$.} In this case Condition (B1) is sufficient. 
\item \textbf{Category 1: There exists exactly one partial-block-neighbor of $w_j$.} 
Let $k$ be the block that is a partial-block-neighbor of $w_j$.  In this case we will have 4 subcategories: 
\begin{itemize} 
\item \textbf{Category 1a: $B_k$ is the only block-neighbor of $w_j$.}  In that case, $\sigma_k$ must ensure that the vertices in $Ngb_k(w_j)$ must be contiguous. 

\item  \textbf{Category 1b:  All pairs returned by CheckConditionB1($w_j$) are of the form $(k, k_2)$ with $k_2 \neq k$.} In that case,  $\sigma_k$ must ensure that the vertices in $Ngb_k(w_j)$ must be contiguous and must be flushed to the Right. 

\item  \textbf{Category 1c:  All pairs returned by CheckConditionB1($w_j$) are of the form $(k_1, k)$ with $k_1\neq k$.} In that case,  $\sigma_k$ must ensure that the vertices in $Ngb_k(w_j)$ must be contiguous and must be flushed to the Left. 

\item \textbf{Category 1d: All other cases.} In that case,  CheckConditionB1($w_j$) returns $(k, k)$ or returns both $(k , k_2)$ and $(k_1, k)$. In both the cases $\sigma_k$ must ensure that the vertices in $Ngb_k(w_j)$ must be contiguous and must either be flushed to the Left or to the Right.
\end{itemize}

\item \textbf{Category 2: There exists exactly two partial-block-neighbor of $w_j$.} 
Let $k_1, k_2$ be the two blocks which are partial-block-neighbors of $w_j$.  In this case we claim that the following three cases may happen:
\begin{itemize}
\item \textbf{Category 2a: $\{k_1, k_2\} = \{1, 2\}$ and  the $\s_{\widetilde{G_P}}$ begins with 121.} In this case the following two conditions must be satisfied: 
\begin{itemize}
\item \textbf{Condition 2a(1)} $\sigma_{1}$ must ensure that the vertices in $Ngb_{1}(w_j)$ are contiguous and must be either flushed to the Right or Left and  $\sigma_{2}$ must ensure that the vertices in $Ngb_{2}(w_j)$ are contiguous and must be either flushed to the Right or Left.
\item   \textbf{Condition 2a(2)} If  the vertices in $Ngb_{1}(w_j)$  flushed to the Left then the vertices of $Ngb_{2}(w_j)$ must be flushed to Right and if  the vertices in $Ngb_{1}(w_j)$  flushed to the Right then the vertices of $Ngb_{2}(w_j)$ must be flushed to the Left.
\end{itemize}
\item \textbf{Category 2b: $\{k_1, k_2\} = \{t-1, t\}$  and  the $\s_{\widetilde{G_P}}$ ends with $t(t-1)t$.} Like the previous condition, both the following two conditions must satisfied: 
\begin{itemize}
\item  \textbf{Condition 2b(1)} $\sigma_{t-1}$ must ensure that the vertices in $Ngb_{t-1}(w_j)$ are contiguous and must be either flushed to the Right or Left and 
 $\sigma_{t}$ must ensure that the vertices in $Ngb_{t}(w_j)$ are contiguous and must be either flushed to the Right or Left.
\item  \textbf{Condition 2b(2)} If  the vertices in $Ngb_{t-1}(w_j)$  flushed to the Left then the vertices of $Ngb_{t}(w_j)$ must be flushed to Right and if  the vertices in $Ngb_{t-1}(w_j)$  flushed to the Right then the vertices of $Ngb_{t}(w_j)$ must be flushed to Left
\end{itemize}
\item \textbf{Category 2c: CheckConditionB1($w_j$) outputs $(k_1, k_2)$.} In this case (from Corollary~\ref{cor:1intersection}) CheckConditionB1($w_j$) cannot output both $(k_1, k_2)$ and $(k_2, k_1)$ unless in the Category 2a and 2b.  Thus in this case $\sigma_{k_1}$ must ensure that the vertices in $Ngb_{k_1}(w_j)$ are contiguous and are flushed to the Right and  $\sigma_{k_2}$ must ensure that the vertices in $Ngb_{k_2}(w_j)$ are contiguous and are flushed to the Left. 
\end{itemize}

\item \textbf{Category 3:} If for some $w_j$, there are more than $2$ partial block neighbors then $G$ cannot be a PTPIG.
\end{itemize}

\noindent
Using algorithm CheckConditionB1($w_j$) we can identify all the various kind of restrictions on $\sigma_1, \dots, \sigma_t$ those are necessary to be imposed so that $G$ is a PTPIG.  Note that if there exists $\sigma_1, \dots, \sigma_t$ such that the above restriction are satisfied for all $w_j$, then $G$ is a PTPIG.  So our goal is to check if there exists $\sigma_1, \dots, \sigma_t$ such that the above restriction are satisfied for all $w_j$. 

\vspace{1em}\noindent
For this we define a generalization of the consecutive $1$'s problem (we call it the oriented-consecutive $1$'s problem) and show how that can be used to store all the restrictions in the PQ-tree that already stores all the restrictions which are imposed by the fact that $G_P$ is a proper interval graph. We will first show how to handle all the conditions except for the special conditions Condition 2a(2) for Category 2a and Condition 2b(2) for Category 2b. After we have successfully stored all the other restrictions we will show how to handle these special conditions.

\vspace{1em}\noindent
Note that except for these special conditions all the restrictions are of the following four kinds: 

\noindent
\begin{itemize}
\item All the vertices in $Ngb_k(w_j)$ are consecutive and flushed to the left 
\item All the vertices in $Ngb_k(w_j)$ are consecutive and flushed to the Right.   
\item All the vertices in $Ngb_k(w_j)$  are  consecutive and either flushed to the Left or flushed to the Right.
\item All the vertices in $Ngb_k(w_j)$ are  consecutive.
\end{itemize}

\noindent
In the Section~\ref{oc1p} we describe the Oriented-Consecutive $1$'s problem and show how that can be used to store all the above restrictions except for the special conditions mentioned above. The following Lemma shows how to handle the special conditions Condition 2a(2)  for Category 2a and Condition 2b(2) for Category 2b.

\begin{lem}
Let $\sigma_1, \dots, \sigma_t$ be an ordering of blocks $B_1,\ldots,B_t$ in $G$ such that for all $k=1,2,\ldots,t$ and for all $w_j\in N$, all the restrictions imposed by $w_j$ on $\sigma_k$ is satisfied, except for the special conditions Condition 2a(2) for Category 2a and Condition 2b(2) for Category 2b. If $\sigma_k^r$ is denotes the reverse permutation of $\sigma_k$ then one of the 16 possibilities obtained by reversing or not reversing the orderings $\sigma_1, \sigma_2, \sigma_{t-1}$ and $\sigma_t$ will be a valid ordering if $G$ is PTPIG.
\end{lem}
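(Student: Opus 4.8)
The plan is to treat the two special conditions as the only couplings that survive the oriented consecutive $1$'s stage, to show each is confined to one pair of extreme blocks, and then to repair both couplings simultaneously by reversing a suitable subset of the four orderings $\sigma_1,\sigma_2,\sigma_{t-1},\sigma_t$. The basic tool is that passing from $\sigma_k$ to $\sigma_k^r$ interchanges every ``flushed to the Left'' requirement on $B_k$ with a ``flushed to the Right'' requirement (and conversely), while leaving every ``flushed to the Left or Right'' and every ``consecutive'' requirement satisfied, and changing neither $\s_{\widetilde{G_P}}$ nor the orderings of the other blocks. Hence for $k\notin\{1,2,t-1,t\}$ the ordering $\sigma_k$ already meets all its requirements and is untouched by the $16$ reversals, so it suffices to repair $B_1,B_2,B_{t-1},B_t$. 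By construction Condition 2a(2) only occurs in Category 2a, which arises only when $\s_{\widetilde{G_P}}$ begins with $B_1B_2B_1$, in which case Corollary~\ref{cor:1intersection} forces every nonprobe triggering it to have $B_1,B_2$ as its only block-neighbours; symmetrically Condition 2b(2) is confined to $(\sigma_{t-1},\sigma_t)$. Thus the two couplings act on disjoint pairs of blocks (for $t\geq4$; the cases $t\leq3$ only make blocks coincide and are still covered by the $16$ reversals), and may be repaired independently.

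I would first analyse the pair $(B_1,B_2)$. Because $\s_{\widetilde{G_P}}$ begins with $B_1B_2B_1$ and $B_1,B_2$ are consecutive blocks, the initial part of the vertex canonical sequence lists all left endpoints of $B_1$ (in the order $\sigma_1$), then all left endpoints of $B_2$ (in the order $\sigma_2$), then all right endpoints of $B_1$. A nonprobe $w_j$ of Category 2a therefore admits exactly two possible perfect substrings: the ``left'' one straddling the junction of the left endpoints of $B_1$ and $B_2$, which requires $Ngb_1(w_j)$ to be a suffix of $\sigma_1$ and $Ngb_2(w_j)$ a prefix of $\sigma_2$, and the ``right'' one straddling the left endpoints of $B_2$ and the right endpoints of $B_1$, which requires $Ngb_1(w_j)$ a prefix of $\sigma_1$ and $Ngb_2(w_j)$ a suffix of $\sigma_2$. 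Writing $\alpha_j,\beta_j\in\{L,R\}$ for the end of $\sigma_1$ resp.\ $\sigma_2$ to which $Ngb_1(w_j)$ resp.\ $Ngb_2(w_j)$ is flushed (these are well defined since Condition 2a(1) holds), Condition 2a(2) says precisely that $\alpha_j\neq\beta_j$, i.e.\ the ``defect'' $\alpha_j\oplus\beta_j$ vanishes.

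The crux is to show that this defect is the \emph{same} for all Category 2a nonprobes, so that a single pair of reversals fixes all of them at once. Here I would use that $G$ is a PTPIG to fix a genuinely valid ordering $\tau=(\tau_1,\dots,\tau_t)$ satisfying the special conditions as well. For any two Category 2a nonprobes $w_j,w_{j'}$ the sets $Ngb_1(w_j),Ngb_1(w_{j'})\subseteq B_1$ are each flushed to an end of $\tau_1$; a short case check shows that two nonempty proper subsets can be simultaneously flushed to ends of a common linear order only when they are nested or disjoint, and that nested sets are then forced to the \emph{same} end while disjoint sets are forced to \emph{opposite} ends. Consequently the relative flushing directions among the $\alpha_j$ are dictated by the fixed neighbourhoods alone, so the whole pattern $(\alpha_j)_j$ is determined up to a single global flip, namely the reversal of $\sigma_1$ (and is rigidly pinned when some other nonprobe imposes a hard ``flush Left/Right'' on $B_1$); the same holds for $(\beta_j)_j$ and $\sigma_2$. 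Comparing $\sigma$ with $\tau$, there are bits $p,s\in\{0,1\}$ with $\alpha_j^\sigma=\alpha_j^\tau\oplus p$ and $\beta_j^\sigma=\beta_j^\tau\oplus s$ for all $j$; since $\tau$ satisfies $\alpha_j^\tau\neq\beta_j^\tau$, the defect in $\sigma$ equals $1\oplus(p\oplus s)$ uniformly in $j$. Thus either the pair already satisfies Condition 2a(2), or reversing exactly one of $\sigma_1,\sigma_2$ repairs every $w_j$ at once; and whenever a block carries a hard constraint its global flip is already pinned to agree with $\tau$, so the reversal we need is available on the companion block (the impossible case ``both pinned the wrong way'' would contradict the validity of $\tau$).

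Finally I would run the identical argument on $(B_{t-1},B_t)$ using the ending $B_tB_{t-1}B_t$, and combine the two independent choices: the reversal of a subset of $\{\sigma_1,\sigma_2\}$ repairing Condition 2a(2) together with the reversal of a subset of $\{\sigma_{t-1},\sigma_t\}$ repairing Condition 2b(2) is one of the $16$ possibilities, leaves every non-special requirement intact by the reversal properties above, and satisfies both special conditions, hence is a valid ordering. The main obstacle is the rigidity statement of the third paragraph --- that merely satisfying Conditions 2a(1)/2b(1) pins the flushing pattern of each extreme block up to reversal --- since it is exactly here that one must convert the combinatorial data (nesting versus disjointness of the neighbourhoods, guaranteed consistent by the existence of a PTPIG representation) into the uniformity of the defect that makes a global block reversal suffice.
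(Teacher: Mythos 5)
Your argument is correct, and it is considerably more than the paper offers: the paper's own proof of this lemma is the single word ``Straightforward.'' The ingredients you use are exactly the ones the paper deploys informally in the correctness proof of Algorithm~\ref{algonew5} that follows the lemma --- reversal of $\sigma_k$ exchanges Left/Right flush requirements while preserving ``consecutive'' and ``Left or Right'' ones; Corollary~\ref{cor:1intersection} confines the two special conditions to the pairs $(B_1,B_2)$ and $(B_{t-1},B_t)$; and Lemma~\ref{lem:flush} pins the flush pattern of each extreme block up to a single global flip, so that one reversal per block repairs every Category~2a (resp.~2b) nonprobe simultaneously, with hard one-sided constraints forcing the flip bit to $0$ and therefore never obstructing the needed reversal. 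Two small points to tidy: Condition~2a(2) asserts $\alpha_j\neq\beta_j$, i.e.\ $\alpha_j\oplus\beta_j=1$, so ``the defect vanishes'' should read ``the defect equals $1$'' (the uniformity-in-$j$ argument is unaffected); and the cases $t\leq 3$, where the two pairs share a block, deserve one explicit line --- choosing each reversal bit $\epsilon_k$ equal to the discrepancy $p_k$ against the reference ordering $\tau$ handles both couplings at once and still respects pinning, since a pinned block has $p_k=0$.
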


\begin{proof}
Straightforward.
\end{proof}

\noindent
So by the above lemma once we identify all the orderings $\sigma_1, \ldots, \sigma_t$ that satisfies all the restrictions imposed by all $w_j \in N$ except for the special conditions Condition 2a(2) for Category 2a and Condition 2b(2) for Category 2b, we have to check if any of the 16 possibilities is a valid set of permutation.

\subsubsection{Oriented-Consecutive $1$'s problem}\label{oc1p}

Consider the following problem: 

\vspace{1em}\noindent
\textbf{Input:} A set $\Omega = \{s_1 \dots, s_m\}$ and $n$ restrictions $(S_1, b_1), \dots, (S_n, b_n)$ where $S_i\subseteq \Omega$ and $b_i\in \{-1, 0, 1, 2\}$.

\noindent
\textbf{Question:} Is there a linear ordering of $\Omega$, say, $s_{\sigma(1)}, \dots, s_{\sigma(m)}$ such that for $1\leq i \leq n$ the following are satisfied:
\begin{itemize}
\item If $b_i = 0$, then all the elements in $S_i$ are consecutive in the linear ordering. 
\item If $b_i = -1$, then all the element in $S_i$ are consecutive in the linear ordering and all the elements of $S_i$ are flushed towards Left, i.e., $s_{\sigma(1)} \in S_i$.
\item If $b_i = 1$, then all the element in $S_i$ are consecutive in the linear ordering and all the elements of $S_i$ are flushed towards Right, i.e., $s_{\sigma(m)} \in S_i$.
\item If $b_i = 2$, then all the element in $S_i$ are consecutive in the linear ordering and all the elements of $S_i$ are either flushed towards Left or flushed towards Right, i.e., either $s_{\sigma(1)} \in S_i$ or $s_{\sigma(m)} \in S_i$.
\end{itemize}

\noindent
Now this problem is very similar to the consecutive $1$'s problem. The consecutive $1$'s problem is solved by using the PQ-tree. Recall the algorithm: given a set $\Omega = \{s_1, \dots, s_m\}$ and a set of subsets of $\Omega$, say, $S_1, \dots, S_m$, the algorithm outputs a PQ tree $T$ with leaves $\{s_1, \dots, s_m\}$ and the property that $PQ(T)$ is the set of all orderings of $\Omega$ where for all $S_i$ ($1\leq i \leq m$), the elements in $S_i$ are contiguous. The algorithm has $m$ iterations. The algorithms starts will a trivial PQ tree $T$ and at the end of the $k$-th iteration the PQ tree $T$ has the property that $PQ(T)$ is the set of all orderings of $\Omega$ where for all $S_i$ ($1\leq i \leq k-1$), the elements in $S_i$ are contiguous in the ordering. At iteration $k$, the PQ tree $T$ is updated using the algorithm $Restrict(T,S_k)$\cite{BL76}\footnote{In \cite{BL76} they define the update algorithm as $Reduce(T,S)$. In this paper we use the name $Restrict(T,S)$ instead of $Reduce(T,S)$ as we think that the word ``Restrict" is more suitable.}. If for some $S_k$, $Restrict(T,S_k)$ cannot be performed, the algorithm halts and rejects. 

Also note that the algorithm $Restrict(T, S_k)$ has linear amortized time complexity. 

\vspace{1em}\noindent
We will use similar technique to solve the Oriented-Consecutive $1$'s Problem. In fact we will reduce the Oriented-Consecutive $1$'s Problem to the standard Consecutive $1$'s Problem, but we will assume that all the restrictions of the form  $(S_i, b_i)$, where $b_i = 2$, appear at the end. That is, if for some $i$ we have $b_i=2$, then for all $j\geq i$ we will have $b_j=2$. We present Algorithm~\ref{algoPQtree} for solving the Oriented-Consecutive $1$'s Problem and the following result proves that the correctness of the algorithm. 

\begin{claim}\label{algo:PQtree}
If we assume that $b_i=2$ implies $b_j = 2$ for all $j\geq i$, then Algorithm~\ref{algoPQtree} is a correct algorithm for the Oriented-Consecutive $1$'s Problem. 
In fact all the valid ordering are stored in the PQ-tree $T$ and the amortized running time of the algorithm is linear. 
\end{claim}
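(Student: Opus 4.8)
The plan is to prove Claim~\ref{algo:PQtree} by establishing, inductively, that the PQ-tree $T$ maintained by Algorithm~\ref{algoPQtree} obeys the Booth--Lueker invariant: after the first $k$ restrictions have been processed, $PQ(T)$ is \emph{exactly} the set of linear orderings of $\Omega$ satisfying the first $k$ oriented constraints. Since the standard algorithm of \cite{BL76} already guarantees this invariant for plain consecutiveness constraints --- each call $Restrict(T,S)$ cuts $PQ(T)$ down to precisely those previously admissible orderings in which $S$ is contiguous, and fails exactly when none remain --- the whole argument reduces to showing that each oriented constraint $(S_i,b_i)$ is faithfully simulated by the plain consecutiveness constraint(s) the algorithm feeds to $Restrict$.

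First I would isolate the \emph{orientation device}: two sentinels $e_L,e_R$ pinned to the two extreme ends (by imposing that $\Omega\cup\{e_L\}$ and $\Omega\cup\{e_R\}$ each be consecutive, which forces $e_R$ and $e_L$ respectively to an end, hence the two distinct sentinels to the two ends, up to the global reversal). With the sentinels so pinned, I would prove the three equivalences that drive the reduction: in any admissible ordering, (i) $S_i$ is contiguous $\iff b_i=0$ is met; (ii) $S_i\cup\{e_L\}$ is contiguous $\iff S_i$ is contiguous and flushed left, i.e.\ $b_i=-1$ is met; and (iii) $S_i\cup\{e_R\}$ is contiguous $\iff b_i=1$ is met. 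Each is a short structural observation about a linear order with a fixed leftmost (resp.\ rightmost) element, and together with the Booth--Lueker invariant they discharge all constraints with $b_i\in\{-1,0,1\}$ and keep the invariant alive through the entire first phase.

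The main obstacle is the disjunctive constraint $b_i=2$ (``flushed left \emph{or} right''), which no single consecutiveness requirement can express, since a PQ-tree encodes only a conjunction of contiguity conditions. This is exactly why the hypothesis ``$b_i=2\Rightarrow b_j=2$ for all $j\ge i$'' is invoked: by the time the first $b=2$ constraint is reached, every conjunctive constraint has been absorbed, so the \emph{shape} of $T$ is final and $PQ(T)$ is already the full set of orderings admissible for the $\{-1,0,1\}$-constraints. On this fixed tree I would use the prefix/suffix characterization --- a set $S_i$ is flushed to an end iff $S_i$ is contiguous \emph{and} its complement $\Omega\setminus S_i$ is contiguous --- to recast $b_i=2$ as a boundary condition at the root of $T$: writing the root as a $P$- or $Q$-node, $S_i$ can be flushed to an end exactly when its leaves form a prefix or a suffix of the root's child sequence (under the reorderings encoded by $T$), and the algorithm enforces this by the corresponding restriction at the root, thereby avoiding the costly complement. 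I would verify soundness and completeness of this boundary handling by the usual case split on whether the root is a $P$-node or a $Q$-node, noting that several sets forced to the same end must nest, so simultaneous satisfiability is precisely what survives in $T$.

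Finally I would assemble the pieces: the invariant holds after every restriction of the first phase, the boundary handling preserves ``$PQ(T)=$ all admissible orderings'' through the $b=2$ phase, and the algorithm accepts iff no restriction ever empties $T$. Hence $T$ is nonempty precisely when a valid oriented ordering exists, and in that case $PQ(T)$ stores all of them, which is the assertion of Claim~\ref{algo:PQtree}. I expect the only delicate point to be the completeness direction of the $b=2$ boundary handling --- ensuring that every ordering flushing each $b=2$ set to some end consistently with the already-fixed conjunctive structure genuinely survives in $T$ --- which I would treat by exhibiting, for any such target ordering, the explicit sequence of $P$-/$Q$-node reorderings of $T$ that realizes it.
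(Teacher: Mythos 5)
Your treatment of the sentinels and of the constraints with $b_i\in\{-1,0,1\}$ matches the paper's argument: pin $\vdash$ and $\dashv$ to the two ends and observe that contiguity of $S_i\cup\{\vdash\}$ (resp.\ $S_i\cup\{\dashv\}$) is equivalent to $S_i$ being contiguous and flushed left (resp.\ right). Up to that point the two proofs coincide.

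The gap is in the $b_i=2$ case, which is the entire point of the claim. You propose to express ``flushed to some end'' as a conjunctive condition via the prefix/suffix characterization ($S_i$ and its complement both contiguous) and to enforce it ``by the corresponding restriction at the root.'' But that is not what Algorithm~\ref{algoPQtree} does: it calls \textbf{ORestrict} (Algorithm~\ref{algonew51}), which greedily tries $Restrict(T,S_i\cup\{\vdash\})$ and, only if that fails, falls back to $Restrict(T,S_i\cup\{\dashv\})$. A proof of the claim must justify this greedy commitment --- a priori it could discard valid orderings in which $S_i$ is flushed right even though flushing left was also feasible. The paper closes exactly this hole with Lemma~\ref{lem:flush}: for any two sets, ``both flushed left'' and ``one left, one right'' cannot both be realizable, so once any orientation has been fixed (by an earlier $b_{i'}=\pm 1$ constraint or by the first $b=2$ choice) the direction for every subsequent $b=2$ set is forced, and in the remaining case (all $b_i\in\{0,2\}$) the two branches produce mutually reversed families, so no ordering is genuinely lost. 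Your proposal never supplies this forcing argument; the ``delicate completeness direction'' you flag at the end is precisely this missing step, and the root-node case analysis you sketch analyzes a different (albeit plausible) algorithm rather than the one the claim is about. You also do not use the hypothesis ``$b_i=2\Rightarrow b_j=2$ for $j\geq i$'' for its actual purpose, which is to guarantee that all conjunctive restrictions are in place before any irreversible greedy choice is made.
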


\begin{proof} First let us assume that $b_i \neq 2$ for all $i$. We initialize the PQ-tree with $\Omega\cup \{\vdash, \dashv\}$ as leaves. Then we use Restrict($T, \Omega\cup \{\vdash\}$) and Restrict($T, \Omega\cup \{\dashv\}$) this implies that any ordering of the $\sigma$ of $\Omega\cup \{\vdash, \dashv\}$ must have $\vdash$ and $\dashv$ at the two ends. Note that if $\sigma$ is in fact a valid ordering of the elements in $\Omega\cup \{\vdash, \dashv\}$ that satisfies all the constraints then the reverse of $\sigma$ is also a valid ordering of the elements in $\Omega\cup \{\vdash, \dashv\}$ that satisfies all the constraints. So we can always assume that $\vdash$ is in the beginning (i.e., the leftmost position) and $\dashv$ is at the end (i.e., the rightmost position). 

\vspace{0.5em}\noindent
From now on, if we say the elements of the set $S_i$ is flushed to the Left we would mean that $S_i\cup \{\vdash\}$ is contiguous. Similarly if we say the elements of the set $S_i$ is flushed to the Right we would mean that $S_i\cup \{\dashv\}$ is contiguous. Now let us check case by case basis: 

\noindent
\begin{itemize}
\item[\textbf{Case 1:}] $b_j = 0$. Then by using Restrict($T, S_j$) we ensure that all the elements in $S_j$ are contiguous in the ordering. 
\item[\textbf{Case 2:}] $b_j = -1$. Then by using Restrict($T, S_j\cup \{\vdash\}$) we ensure that all the elements in $S_j\cup \{\vdash\}$ are contiguous in the ordering  and since $\vdash$ is at the  leftmost position so that means the elements in $S_j$ are contiguous and flushed to the Left. 
\item[\textbf{Case 3:}] $b_j = 1$. Then by using Restrict($T, S_j\cup \{\dashv\}$) we ensure that all the elements in $S_j\cup \{\dashv\}$ are contiguous in the ordering  and since $\dashv$ is at the  rightmost position so that means the elements in $S_j$ are contiguous and flushed to the Right. 
\end{itemize}

\normalsize

\noindent
So if $b_i\neq 2$ for all $i$, then the algorithm solves the Oriented-Consecutive $1$'s Problem correctly. 

\vspace{0.5em}\noindent
Now let us assume that there exists $i$ such that for all $j\geq i$, $b_j =2$ and for all $j<i$, $b_j \neq 2$.  Firstly for all $i\leq j \leq n$ we can use Restrict($T, S_j$) to ensure that
the elements of $S_j$ are contiguous. So all we have to check if they can be all flushed to either left or right. 

Note that if there $i\leq j_1, j_2 \leq n$ such that $S_{j_1}$ and $S_{j_2}$ have non zero intersection then: 
\begin{enumerate}
\item \textbf{If $S_{j_1}\subset S_{j_2}$ then} it is sufficient to ensure that $S_{j_1}$ is flushed to the left or right and hence we can forget about flushing $S_{j_2}$ to left or right. 
\item \textbf{If $S_{j_2}\subset S_{j_1}$ then} it is similar to the previous case. 
\item \textbf{If $S_{j_1}\not\subset S_{j_2}$ and $S_{j_2}\not\subset S_{j_1}$ then} note that both the conditions can be simultaneously satisfied only if $S_{j_1}\cup S_{j_2} = \Omega$ and 
in that case the following restrictions are necessary and sufficient:
\begin{enumerate}
\item $S_{j_1} \cap S_{j_2}$ is contiguous
\item $S_{j_1}\backslash S_{j_2}$ is contiguous
\item $S_{j_2}\backslash S_{j_1}$ is contiguous 
\end{enumerate}
\end{enumerate}

Note that one can in linear time can go over the sets $S_j$ and, using the above observation, reduce the set of restriction to the form ($S_j, 2$) where all the $S_j$ are disjoint.
Also note that if there are three mutually disjoints sets $S_{j_1}$, $S_{j_2}$ and $S_{j_3}$ then all three restriction $(S_{j_1}, 2)$, $(S_{j_2}, 2)$ and $(S_{j_3}, 2)$ cannot be satisfied simultaneously. 

So after the above reduction we are left with at most two restriction of the form $(S_{j_1}, 2)$ and $(S_{j_2}, 2)$ that are yet to be imposed. 

Note that till this point we have only used the Restrict algorithm on various sets and hence amortized runtime is still linear.  At this stage since only two more restrictions are to be 
imposed we can make 2 copies of the current PQ-tree and in one PQ-tree check if $Restrict(T, S_{j_1}\cup \{\vdash\})$ and $Restrict(T, S_{j_2}\cup \{\dashv\})$ can be successfully applied
and in the other PG-tree check if $Restrict(T, S_{j_1}\cup \{\dashv\})$ and $Restrict(T, S_{j_2}\cup \{\vdash\})$ can be successfully applied. 
 
If for either PQ-tree the restrictions cannot be successfully applied then we reject. Else we have a new PQ-tree. Note that if for both the PQ-trees the restrictions can be successfully
applied then if $\sigma$ is a permutation that satisfies the conditions, then the reverse of permutation $\sigma^r$ will also satisfy the conditions. Now if for some $i$, $b_i=2$ and we flushed $S_i$ to Left. 

Thus the total runtime of the algorithm is linear.  

The main idea is to consider all the restrictions with $b_i =2$ at the very end. This can be done by storing all sets $S_i$, which have the restrictions of the form $(S_i, 2)$, in a 
bucket (say $\mathcal{S}$) and all then using an algorithm (say CombinedORestrict($T, \mathcal{S}, \{\vdash, \dashv\}$)) to carefully reduce the set of restriction to at most two 
restrictions, as described above, and then checking if the two restrictions can be applied using 2 copies of the PQ-tree.    
\end{proof}

\begin{algorithm}[h]
\SetKwData{Left}{left}\SetKwData{This}{this}\SetKwData{Up}{up}
\SetKwFunction{Union}{Union}\SetKwFunction{FindCompress}{FindCompress}
\SetKwInOut{Input}{Input}\SetKwInOut{Output}{Output}
\footnotesize{
\Input{$\Omega$, $(S_1, b_1),\dots, (S_n, b_n)$ } 
\Output{A $PQ$-tree $T$ over the universe $\Omega$ such that all the remaining orderings satisfy the restrictions $(S_j, b_j)$} 
%\BlankLine
Initialize PQ-tree $T$ with $\Omega \cup \{\vdash, \dashv\}$ as leaves attached to the root (which is a P-node)\\
Initialize set $\mathcal{S}$ to $\emptyset$
Restrict($T, \Omega\cup \{\vdash\}$); \\ 
Restrict($T, \Omega\cup \{\dashv\}$); \\
\For{$j \leftarrow 1$ \KwTo $t$}{
\If{$b_j = 0$}{
Restrict($T, S_j$);
}
\If{$b_j = -1$}{
Restrict($T, S_j\cup \{\vdash\}$);
}
\If{$b_j = 1$}{
Restrict($T, S_j\cup \{\dashv\}$);
}
\If{$b_j = 2$}{
Restrict($T, S_j$);\\
$\mathcal{S} = \mathcal{S} \cup \{S_j\}$;
}
}
CombinedORestrict($T, \mathcal{S}, \{\vdash, \dashv\}$);
}
\caption{Oriented-Consecutive $1$'s test}\label{algoPQtree}
\end{algorithm}

\begin{algorithm}[h]
\SetKwData{Left}{left}\SetKwData{This}{this}\SetKwData{Up}{up}
\SetKwFunction{Union}{Union}\SetKwFunction{FindCompress}{FindCompress}
\SetKwInOut{Input}{Input}\SetKwInOut{Output}{Output}
\footnotesize{
\Input{PQ-tree $T$ on set $\Omega$, set $\mathcal{S}\subset 2^{\Omega}$, $\set{a, b}$}
\Output{PQ-Tree}
\For{$S\in \mathcal{S}$}{
\If{$Restrict(T, S\cup \{a\})$ is possible}{Restrict($T, S\cup \{a\}$)}
\Else{ Restrict($T, S\cup \{b\}$)}
}
}
\caption{CombinedORestrict($T, S, \{a,b\}$)}\label{algonew51}
\end{algorithm}

\subsubsection{The algorithm}

\noindent
The Algorithm~\ref{algonew2} ckecks if for a given $w_j\in N$ the Condition (B1) is satisfied. The algorithm is similar to the one in CASE I where we assumed that the graph $G_P$ is a connected reduced proper interval graph. Once we have the Algorithm~\ref{algonew2} for checking if the graph satisfies Condition (B1) for every $w_j \in N$, the Algorithm~\ref{algonew5} checks whether the graph $G=(P,N,E)$ is a PTPIG.  In the following theorem we prove the correctness of the algorithm.

\begin{thm}
Given graph $G=(P, N ,E)$ such that $G_P$ is a connected proper interval graph, the Algorithm \ref{algonew5} correctly decides whether $G$ is a PTPIG with probes $P$ and nonprobes $N$ in time $O(|P|+|N|+|E_2|)$, where $E_2$ is the set of edges between probes $P$ and nonprobes $N$.
\end{thm}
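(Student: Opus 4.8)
The plan is to prove correctness and running time separately, reducing correctness to Theorem~\ref{main} via the block structure recorded in the PQ-trees $T_1,\dots,T_t$. I would begin with the \emph{necessity} direction. Assuming $G$ is a PTPIG, Theorem~\ref{main} supplies a canonical ordering of $P$ whose canonical sequence $\s_{G_P}$ satisfies Condition~(A); projecting the perfect substring guaranteed for each $w_j$ onto the unique (up to reversal) canonical sequence of blocks $\s_{\widetilde{G_P}}$ yields a perfect block substring, so Condition~B1 holds for every $w_j$ and the algorithm never rejects at that test. In such a perfect substring all interior vertices are neighbors of $w_j$, so every interior block is a full-block-neighbor and only the two boundary blocks can be partial; hence at most two partial-block-neighbors occur, Category~3 is never entered, and within each partial boundary block the neighbors of $w_j$ are exactly the vertices adjacent to the interior of the substring, i.e.\ they are contiguous and flushed toward the appropriate end. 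These are precisely the restrictions the algorithm imposes in Categories~0, 1 and~2, so the within-block orderings $\sigma_1,\dots,\sigma_t$ read off from the assumed ordering satisfy all of them, except possibly the cross-block coupling of Conditions~2a(2) and~2b(2). Thus no $T_k$ becomes empty, and by the Lemma on the $16$ flip-variants together with Claim~\ref{algo:PQtree} (correctness of the Oriented-Consecutive $1$'s solver) the final Condition~(A) check succeeds.

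For the \emph{sufficiency} direction I would run the implication in reverse: if the algorithm accepts, the permutations $\overline{\sigma_1},\dots,\overline{\sigma_t}$ it fixes concatenate into a canonical ordering of $P$ whose canonical sequence satisfies Condition~(A)—this is exactly what the final loop verifies—so Theorem~\ref{main} certifies that $G$ is a PTPIG. The technical content is to confirm that the flush restrictions stored in each $T_k$ genuinely force, for every $w_j$, that the neighbors of $w_j$ occupy one substring of $\s_{G_P}$ containing all of them. Here I would invoke Claim~\ref{algo:PQtree}, use Lemma~\ref{lem:flush} to rule out conflicting flush demands on a single block, and appeal to Corollary~\ref{cor:1intersection} to justify that in Category~2c one cannot be forced to return both $(k_1,k_2)$ and $(k_2,k_1)$, so a single orientation per boundary block suffices and only the genuinely coupled endpoint configurations are deferred to the $16$-variant check.

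The running-time bound I would assemble from three contributions. Initialising the $t$ PQ-trees and seeding them with the $\vdash,\dashv$ markers costs $O(|P|)$, since $\sum_k|B_k|=|P|$ and there are only $2t\le 2|P|$ markers. Computing each $f_{w_j}$ and running CheckConditionB1$(w_j)$ costs $O(d_j)$ by the same double-pass analysis used in Step~I, summing to $O(|N|+|E_2|)$. The key point is that every Restrict/ORestrict call on $T_k$ involves a set of size $O(|Ngb_k(w_j)|)$, so by the amortised Booth--Lueker bound \cite{BL76} the total cost of all PQ-tree updates over all blocks is $O\!\left(|P|+\sum_{j,k}|Ngb_k(w_j)|+|N|\right)=O(|P|+|N|+|E_2|)$, the ORestrict wrapper (Algorithm~\ref{algonew51}) merely doubling the work of one Restrict. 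Finally, the $16$-variant verification of Condition~(A) is a constant number of passes of the Step~I verifier, again $O(|P|+|N|+|E_2|)$. Adding these gives the claimed total.

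The main obstacle I anticipate is the necessity analysis of the flush constraints: showing that the split into Categories~0--3 together with the Left/Right/either tags encodes \emph{all and only} the within-block orderings compatible with some perfect substring for $w_j$. The delicate subcases are Category~1d, where a single boundary block may legitimately flush to either side, and Category~2, where the two partial boundary blocks sitting at the very ends of $\s_{\widetilde{G_P}}$ (the $121$ and $t(t-1)t$ configurations of Corollary~\ref{cor:1intersection}) impose a coupled orientation that the Oriented-Consecutive $1$'s framework cannot express by local per-block restrictions. Isolating exactly these coupled cases and discharging them through the $16$-variant enumeration, while proving that no other coupling can arise, is the part of the argument demanding the most care.
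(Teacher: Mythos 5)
Your proposal is correct and follows essentially the same route as the paper's own proof: verifying Condition~B1 per nonprobe, encoding the category-wise flush restrictions via the Oriented-Consecutive $1$'s machinery (Claim~\ref{algo:PQtree}, Lemma~\ref{lem:flush}, Corollary~\ref{cor:1intersection}), deferring the coupled endpoint cases to the $16$-variant check, and summing the same three running-time contributions. Your explicit necessity/sufficiency split is a slightly cleaner organization than the paper's line-by-line walkthrough, but the substance is identical.
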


\begin{proof}
We follow notations, terminologies and ideas developed in the beginning of CASE II and in the `Idea of the Algorithm'. In Lines 1--3 of Algorithm \ref{algonew5}, we fix the markers $\dashv$ and $\vdash$ at the beginning and end of each block $B_k$ respectively, $k=1,2,\ldots,t$. Then for each $j=1,2,\ldots, q$, we run the algorithm. First, in Line 5, we check Condition B1 for $w_j$ by Algorithm \ref{algonew2} which takes a help from Algorithm \ref{algonew1}. Both algorithms are similar to Algorithm \ref{algonew12} and \ref{algonew11}. The difference is that Algorithm \ref{algonew2} not only ensures that the graph $G$ satisfies Condition B1 for vertex $w_j \in N$  but also return $(k_1, k_2)$ where $k_1$ and $k_2$ are the index of the start and end block of the substring that helps to satisfy the Condition B1. It also ensure that no block in between $k_1$ and $k_2$ is a partial block neighbor of $w_j$.  If $w_j$ has more than two partial-block neighbors, the Algorithm rejects. 

\vspace{1em}\noindent
Now in Lines 6--24, Algorithm \ref{algonew5} uses the ideas developed for Algorithm \ref{algoPQtree} for Oriented-consecutive $1$'s problem to apply all the necessary restrictions on the orderings of vertices in each block,  as described in Section~\ref{idea2}, except for the special conditions Condition 2a(2)  for Category 2a and Condition 2b(2) for Category 2b.

\vspace{1em}\noindent
Finally for checking whether special conditions Condition 2a(2) for Category 2a and Condition 2b(2) for Category 2b are satisfied, we require Lines 17-19 and 25-26. Suppose $w_j$ falls under Category 2a and both $B_1$ and $B_2$ are partial-block-neighbors of $w_j$. Then Algorithm \ref{algonew2} returns both $(1,2)$ and $(2,1)$. Note that we have already ensure that both $Ngb_1(w_j)$ and $Ngb_2(w_j)$ are either flushed Left or Right. If for some other $w_{j'}$ we have already flushed $Ngb_1(w_{j'})$ Left or Right then by  we had only one option when we were flushing $Ngb_1(w_j)$ to Left or Right. If for no other $w_{j'}$ we have already flushed $Ngb_1(w_{j'})$ Left or Right then we can consider either $\sigma_1$ or $\sigma_1^r$.  The same argument holds for $\sigma_2$, $\sigma_t$ and $\sigma_{t-1}$. Thus if we check for all possibilities for $\sigma_i$ or $\sigma_i^r$, $i=1,2, t, t-1$ (as in Lines 25-26 of Algorithm \ref{algonew5}) we can correctly decide whether $G$ is a PTPIG with probes $P$ and nonprobes $N$.

\vspace{1em}\noindent
Now we compute the running time. The total running time of Algorithm \ref{algonew2} (including the formation of look up tables and the function $f_{n_i}(j)$ is $O(|P|+|E_2|)$ as before, where $E_2$ is the set of edges between probes $P$ and nonprobes $N$). The variation of PQ-tree algorithm used in Lines 10, 12, 14, 16, 19 and 22 requires $O(|P|+|N|+|E_2|)$ as it requires for normal PQ-tree algorithm. The Line 26 requires constant times of $|E_2|$ steps for $4$ blocks, namely, $B_1,B_2,B_{t-1}$ and $B_t$ where $\sigma_s$ or $\sigma_s^r$ ($s=1,2,t-1,t$) is required if the special cases Category 2a or 2b occur.
Thus the total running time is $O(|P|+|N|+|E_2|)$.
\end{proof}

\begin{algorithm}[h]
\SetKwData{Left}{left}\SetKwData{This}{this}\SetKwData{Up}{up}
\SetKwFunction{Union}{Union}\SetKwFunction{FindCompress}{FindCompress}
\SetKwInOut{Input}{Input}\SetKwInOut{Output}{Output}
\footnotesize{
\Input{An array $A[1, T]$ containing only $0/1/2$ entries, $d$ (Assume $A[T+1] = 0$).}
\Output{Output all $(s, t)$ pairs such that $(t-s + 1) \geq d$ and for all $s< i < t$, $A[i] =1$,  $A[s], A[t]\in \{1,2\}$, (either $s=1$ or $A[s-1] = 0$ or $A[s-1]= A[s]=2$) and (either $t=T$ or $A[t+1]=0$ or $A[t] = A[t+1] = 2$).}
\BlankLine
Initialize $\ell = r = 0$
\While{$r\leq T$}{
\If{$A[r+1] = 0$}{
\If{$r-\ell \geq d$}{Output $(\ell+1, r)$}
$\ell = r+1$; $r= r+1$
}
\If{$A[r+1]= 1$}{$r = r+1$}
\If{$A[r+1] = 2$}{
\If{$A[r] =1$}{
\If{$r+1-\ell \geq d$}{Output $(\ell+1, r+1)$}
$\ell = r$; $r= r+1$
}
\If{$A[r] \neq 1$}{
$\ell = r$; $r = r+1$ 
}
}
}
}
\caption{Finding-1-2-Subsequence}\label{algonew1}
\end{algorithm}%\renewcommand\footnoterule{}%\IncMargin{0em}
\begin{algorithm}[ht]
\SetKwData{Left}{left}\SetKwData{This}{this}\SetKwData{Up}{up}
\SetKwFunction{Union}{Union}\SetKwFunction{FindCompress}{FindCompress}
\SetKwInOut{Input}{Input}\SetKwInOut{Output}{Output}
\footnotesize{
\Input{Given a graph $G=(P, N, E)$, $w_j \in N$. \textbf{(For $f_{w_j}$ refer to Definition~\ref{def:012}).}} 
\Output{If a  substring that can satisfy Condition B1 (for the vertex $w_j$) exist then it outputs $(k_i, k_2)$ where $k_1$ and $k_2$ are the block numbers that appear at the beginning and end of the substring, respectively. If no such substring exists then it outputs REJECT.}
%\BlankLine
%\For{$i\leftarrow 1$ \KwTo $k$}{
Initialize two arrays $X$ and $Y$ of length $4d_j +1$ by setting all the entries to $0$.\\
Let $\ell = L(A_j[1])$ and $r=R(A_j[1])$. \\
\For{each $k\leftarrow 1$ \KwTo $d_j$}{\label{forins2}
Let $s = L(A_j[k])$ and $t = R(A_j[k])$\\
If ($\ell - 2d_j \leq s\leq \ell+2d_j$) \textbf{Mark}  $X[s-\ell + 2d_j +1] = f_{w_j}(A_j[k])$\\
If ($\ell - 2d_j \leq t\leq \ell+2d_j$) \textbf{Mark}  $X[t-\ell + 2d_j +1] =  f_{w_j}(A_j[k])$\\
If ($r - 2d_j \leq s\leq r+2d_i$) \textbf{Mark}  $Y[s-r + 2d_j +1] =  f_{w_j}(A_j[k])$\\
If ($r - 2d_j \leq t\leq r+2d_j$) \textbf{Mark}  $Y[t-r + 2d_j +1] =  f_{w_j}(A_j[k])$\\
}
\For{all $(a, b)$ output of \textbf{Find-1-2-Subsequence($X,d_j$)}}{
Let $\hat{a} = a + \ell - 2d_j -1$ and $\hat{b} = b + \ell - 2d_j -1$.\\
\If{$\forall 1\leq k \leq d_j$ Either $\hat{a}\leq L(A_j[k]) \leq\hat{b}$ OR  $\hat{a}\leq R(A_j[k]) \leq\hat{b}$}{
Return($k_1, k_2$) such that (Either $L(b_{k_1}) = \hat{a}$ OR $R(b_{k_1}) = \hat{a}$) AND (Either $L(b_{k_2})=\hat{b}$ OR $R(b_{k_2})=\hat{b}$)
}
}
\For{all $(a, b)$ output of \textbf{Find-1-2-Subsequence($Y,d_j$)}}{
Let $\hat{a} = a + r - 2d_j -1$ and $\hat{b} = b + r - 2d_j -1$.\\
\If{$\forall 1\leq k \leq d_j$ Either $\hat{a}\leq L(A_j[k]) \leq\hat{b}$ OR  $\hat{a}\leq R(A_j[k]) \leq\hat{b}$}{
Return($k_1, k_2$) such that (Either $L(b_{k_1}) = \hat{a}$ OR $R(b_{k_1}) = \hat{a}$) AND (Either $L(b_{k_2})=\hat{b}$ OR $R(b_{k_2})=\hat{b}$)
}
}
If nothing is returned then \textbf{REJECT}\\
}
\caption{CheckConditionB1($w_j$)}\label{algonew2}
\end{algorithm}
%\DecMargin{1em}

%\newpage 

\noindent
\IncMargin{0em}
\begin{algorithm}
%\SetAlgoLined
%\SetKwData{Left}{left}\SetKwData{This}{this}\SetKwData{Up}{up}
%\SetKwFunction{Union}{Union}\SetKwFunction{FindCompress}{FindCompress}
\SetKwInOut{Input}{Input}\SetKwInOut{Output}{Output}

\Input{Given a graph $G=(P,N,E)$} 
\Output{Accept if $G$ is a PTPIG}

%\BlankLine
\footnotesize{
\For{$k \leftarrow 1$ \KwTo $t$}{
Initialize PQ-tree $T_k$ with $B_k \cup \{\vdash, \dashv\}$ as leaves attached to the root (which is a P-node)\\
Restrict($T_k, B_k\cup \{\vdash\}$); \ Restrict($T_k, B_k\cup \{\dashv\}$);
Initialize set $S_k$ to $\emptyset$; 
}
\For{$j\leftarrow 1$ \KwTo $q$}{
\If{Condition B1 is satisfied by node $w_j$}{
\If{Number of Partial-Block Neighbors of $w_j$ is 0}{$j = j+1$}
\If{Number of Partial-Block-Neighbors of $w_j$ is 1 (let $B_k$ be the partial-block-neighbor)}{
\If{Number of Block-Neighbors of $w_j$ is $1$}{Restrict($T_k,  Ngb_k(w_j)$); \  $j = j +1 $}
\If{All pairs returned by $CheckConditionB1(w_j)$ are of the form $(k, k_2)$ with $k_2\neq k$}{Restrict($T_{k},  Ngb_{k}(w_j)\cup \vdash$);\ $j = j+1$}
\If{All pairs returned by $CheckConditionB1(w_j)$ are of the form $(k_1, k)$ with $k_1\neq k$}{Restrict($T_{k},  Ngb_{k}(w_j)\cup \dashv$);\ $j = j+1$}
\Else{Restrict($T_{k},  Ngb_k(w_j)$); $S_k = S_k \cup \{Ngb_k(w_j)\}$; $j = j+1$}
}
\If{Number of Block Neighbors of $w_j$ is 2 (let $B_{k_1}$, $B_{k_2}$ be the partial-block-neighbors)}{
\If{ $(\{k_1, k_2\} = \{1, 2\}$ $\&$ $\s_{\widetilde{G_P}}$ starts with $121)$ \textbf{Or}  $(\{k_1, k_2\} = \{t-1, t\}$ $\&$ $\s_{\widetilde{G_P}}$ ends with $t(t-1)t)$}
{Restrict($T_{k_1}, Ngb_{k_1}(w_j)$); Restrict($T_{k_2}, Ngb_{k_2}(w_j)$); $S_k = S_k \cup \{Ngb_{k_1}(w_j)\}\cup \{Ngb_{k_2}(w_j)\}$; $j = j+1$}
\Else{
Let $CheckConditionB1(w_j)$ returns $(k_1, k_2)$ \\ 
Restrict($T_{k_1}, Ngb_{k_1}(w_j) \cup \{\vdash\}$); Restrict($T_{k_2}, Ngb_{k_2}(w_j)\cup \{\dashv\}$); $j = j+1$}
}
\If{Number of Block Neighbors of $w_j$ is $\geq 3$}{REJECT}
}
}
}
\For{$k\leftarrow 1$ \KwTo $t$}{
CombinedORestrict($T_k, S_k, \{\vdash, \dashv\}$)
}
Let $\sigma_1, \sigma_2, \ldots, \sigma_t$ be permutations satisfying the PQ-trees $T_1, \dots, T_t$. \\
\If{$\overline{\sigma_1}, \overline{\sigma_2}, \ldots, \overline{\sigma_{t-1}}, \overline{\sigma_t}$ satisfies Condition (A) of Theorem~\ref{main}, where $\overline{\sigma_s}=\sigma_s$ or, $\sigma_s^r$ for $s=1,2,t-1,t$}{ACCEPT}
\Else{REJECT}
\caption{Testing if $G$ is PTPIG}\label{algonew5}
\end{algorithm}
%\DecMargin{1em}

\subsection{CASE III: The graph $G_P$ is a proper interval graph  (not necessarily connected or reduced)}  

\noindent
Finally, we consider the graph $G=(V,E)$ with an independent set $N$ (nonprobes) and $P=V\smallsetminus N$ (probes) such that $G_P$ is a proper interval graph, which may not be connected. Suppose $G_P$ has $r$ connected components $G_1,G_2,\ldots,G_r$ with vertex sets $P_1, \dots, P_r$. For $G$ to be a PTPIG, it is essential that the subgraphs of $G$ induced by $P_k\cup N$ is a PTPIG for each $k=1,2,\ldots, r$. As we have seen in the last subsection we can check if all the subgraphs are PTPIG in time $O(|V| + |E|)$. In fact, for each $k$, we can store all the possible canonical orderings of vertices in $P_k$ such that the corresponding canonical sequence satisfies the Condition (A) of Theorem~\ref{main} so that the graph induced by $P_k \cup N$ is a PTPIG. 

\vspace{1em}\noindent
For any vertex $w_j \in N$, let us call $G_k$ a {\em component-neighbor} of $w_j$ if there is a vertex in $P_k$ that is adjacent to $w_j$. $G_k$ is called a {\em full-component-neighbor} of $w_j$ if all the vertices in $P_k$ are neighbors of $w_j$. Also $G_k$ is called a {\em partial-component-neighbor} of $w_j$ if there exists at least one vertex in $P_k$ that is a neighbor of $w_j$ and at least one vertex in $P_k$ that is not a neighbor of $w_j$.  

\vspace{1em}\noindent
We will just be presenting the idea of the algorithm. The steps of the algorithm is obvious from the description given. We will be using all the algorithm developed so far in the previous sections for this recognition algorithm. 

\subsubsection{Idea of the algorithm}

\noindent
To check if the whole graph $G$ is a PTPIG we have to find if there exists a canonical ordering of all the vertices in $G_P$ such that for the whole graph the Condition (A) of Theorem~\ref{main} is satisfied. Note that a canonical ordering of the vertices of $G_P$ would place the vertices in each connected component next to each other and moreover for each $k$, the ordering of the vertices of $G_k$ would be a canonical ordering for the graph $G_k$. So to check if $G$ is a PTPIG we have to find if there exist an ordering of the connected components and canonical ordering of vertices in each of the components such that the corresponding canonical ordering satisfies the Condition (A) of Theorem~\ref{main}.  In fact $G$ is a PTPIG if and only if the following condition is satisfied:

\vspace{1em}\noindent
\textbf{Condition (C1):} There exists a permutation $\pi: \{1, \dots, r\} \to \{1, \dots, r\}$ and canonical sequences $\s_{G_1}, \ldots, \s_{G_r}$ of $G_1, \ldots, G_r$  such that the canonical sequence $\s_{G_P}$ of $G_P$ obtained by concatenation of the canonical sequences of $G_{\pi(1)}, \dots, G_{\pi(r)}$ (that is,  $\s_{G_P} = \s_{G_{\pi(1)}}\dots \s_{G_{\pi(r)}}$) has the property that for all $w\in N$, there exists a \textit{perfect substring} of $w$ in $\s_{G_P}$ (that is, there exists a substring of $\s_{G_P}$  where only the vertices of $w$ appear and all the neighbors of $w$ appears at least once). 

\vspace{1em}\noindent
Firstly we will use our previous algorithms to store all the possible canonical orderings of the vertices in each component so that the graphs induced by $G_k\cup N$ is a PTPIG, for each $k$. As usual we will store the restrictions using the PQ-tree. Next we will have to add some more restrictions on the canonical ordering of the vertices in each of the connected components which are necessary for the graph $G$ to be a PTPIG. These restriction will be be stored in the same the PQ-tree. At last we check if there exists an ordering of the components such that the corresponding canonical ordering satisfies the Condition (A) of Theorem~\ref{main}. 
  
\vspace{1em}\noindent
Now to find the extra restrictions on the canonical ordering we have to place we will look at several cases depending on how many partial-component neighbors are there for each $w_j\in N$. The main idea is that the following is a necessary condition for the above Condition (C1) to be satisfied:

\vspace{1em}\noindent
\textbf{Condition (C2):} If vertex $w_j$  has more than one component-neighbors, then for any component $G_k$ the canonical ordering of the vertices in $G_k$ should ensure that not only there is a perfect substring of $w_j$ in $\s_{G_k}$, but also a perfect substring of $w_j$ must be present either at the beginning or at the end of $\s_{G_k}$. 

\vspace{1em}\noindent
Depending on how many partial-component-neighbors are there for each $w_j\in N$ and the number of blocks in each of the partial component we list the restrictions on the canonical ordering of the vertices in each component so the Condition (C2) is satisfied. Here we will also use the notion of block-neighbor as defined in the previous section. 

\begin{itemize}
\item (Case 1): $w_j$ has only one component neighbor. In this case there are no extra restrictions. 
\item (Case 2): $w_j$ has no partial-component neighbors.  In this case there are no extra restrictions. 
\item (Case 3): $w_j$ has more than two partial-component-neighbors.  In this case the graph $G$ cannot be a PTPIG. 
\item (Case 4): $w_j$ has more than one component neighbors and in one of the component neighbor there are 2 or more partial-block-neighbors. In this case the graph $G$ cannot be a PTPIG. 
\item (Case 5): $w_j$ has more than one component neighbors and let $G_k$ be a partial-component neighbor of $w_j$. In this case we have two cases depending on the arrangement of blocks of $G_k$: 
\begin{itemize}
\item (Case 5a): If $G_k$ has only one block. Then we have to add the restriction that in the canonical ordering of the vertices in $G_k$ the neighbors of $w_j$ must be either flushed Left or flushed Right.  If the component $G_k$ is a partial-component-neighbor of more than one nonprobe vertex then, the choice of whether to flush Left or Right is fixed.
\item (Case 5b): If $G_k$ has more than one blocks and at least one of the block (say $B^k_{s}$) is a partial-block-neighbor of $w_j$. By Corollary~\ref{cor:disjoint} and Corollary~\ref{cor:1intersection} we see that there is at most one direction (Left or Right) to flush the neighbors of $w_j$ in $B^k_{s}$ so that in the corresponding canonical sequence there is a  perfect-substring of $w_j$ either at the beginning or the end.
\end{itemize}
\end{itemize}

\vspace{1em}\noindent
Note that one can easily identify the cases above and apply the necessary restrictions on the orderings using the PQ-tree and the same techniques as developed for the Oriented-consecutive $1$'s Problem in Section~\ref{oc1p}. If after applying the restriction we see that Condition (C2) is not satisfied, we know that $G$ is not a PTPIG.  All this can be done in time $O(|V| + |E|)$.

\vspace{1em}\noindent
Once we have been able to apply the restriction and ensure that the Condition (C2) is satisfied we now need to check if there exits an ordering of the components so that Condition (C1) is satisfied. For any $w_j\in N$ and any connected component $G_k$, if $G_k$ is a component-neighbor of $w_j$ then we know whether the perfect substring of $w_j$ is at the beginning or end of the canonical sequence of $G_k$. If the perfect substring of $w_j$ is at the beginning of the canonical sequence of $G_k$ we will call $w_j$ {\em left-oriented} with respect to $G_k$ and if the perfect substring of $w_j$ is at the end of the canonical sequence of $G_k$ we will call $w_j$ {\em right-oriented} with respect to $G_k$. 

\vspace{1em}\noindent
The problem now is to check if there exists an ordering of the connected components such that for any $w_j\in N$ the following conditions are satisfied:
\begin{enumerate}
\item the component-neighbors of $w_j$ must be consecutive,
\item there exist at most one of the component-neighbors $G_k$ of $w_j$ such that $w_j$ is right-oriented with respect to $G_k$ and this component-neighbor is first component-neighbor of $w_j$ in the ordering,
\item there exist at most one of the component-neighbors $G_k$ of $w_j$ such that $w_j$ is left-oriented with respect to $G_k$ and this component-neighbor is last component-neighbor of $w_j$ in the ordering.
\end{enumerate}

\vspace{1em}\noindent
This problem can be reduced to a consecutive $1$'s problem: Let $\Omega = \cup_{k =1}^r \{\vdash_k, \dashv_k\}$.  For any $w_j\in N$ let us define the set $T_j$ as follows: 
\begin{itemize}
\item if $G_k$ is a full-component neighbor of $w_j$ the $T_j$ contains $\vdash_k$ and  $\dashv_k$
\item if $G_k$ is a partial-component-neighbor of $w_j$ and $w_j$ is right-oriented with respect to $B_k$ then $T_j$ contains $\dashv_k$ 
\item if $G_k$ is a partial-component-neighbor of $w_j$ and $w_j$ is left-oriented with respect to $B_k$  then $T_j$ contains $\vdash_k$
\end{itemize}

\vspace{1em}\noindent
Now $G$ is a PTPIG if and only if there exists a permutation of $\Omega$ that satisfies the following properties:
\begin{itemize}
\item For all $k$, $\{\vdash_k, \dashv_k\}$ are next to each other.
\item For all $w_j\in N$, the elements in $T_j$ must be consecutive.  
\end{itemize}

\vspace{1em}\noindent
Note that the above can be tested easily using the PQ-tree in time linear in $|V|+|E|$ and with this we have the complete recognition algorithm for PTPIG.

\section{Conclusion}
The study of interval graphs was spearheaded by Benzer \cite{B} in his studies in the field of molecular biology. In \cite{Z}, Zhang introduced a generalization of interval graphs called probe interval graphs in an attempt to aid a problem called cosmid contig mapping. In order to obtain a better model another generalization of interval graphs were introduced that capture the property of overlap information, namely tagged probe interval graphs (TPIG) by Sheng, Wang and Zhang in \cite{LP}. Still there is no recognition algorithm for TPIG, in general. 

\vspace{1em}
\noindent
In this paper we characterize and obtain linear time recognition algorithm for a special class of TPIG, namely proper tagged probe interval graphs (PTPIG). The problem of obtaining a recognition algorithm for TPIG, in general is challenging and open till date. It is well known that an interval graph is a proper interval graph if and only if it does not contain $K_{1,3}$ as an induced subgraph of it. Similar forbidden subgraph characterization for PTPIG is another interesting problem.

%\newpage

\end{document}